\title{A functional Law of the Iterated Logarithm for weakly hypoelliptic diffusions at time zero} 
\author[M. Carfagnini, J. F\"{o}ldes, D. P. Herzog]
     {Marco Carfagnini, Juraj F\"{o}ldes, David P. Herzog\\
  \scriptsize{emails: marco.carfagnini@uconn.edu, foldes@virginia.edu, 
                dherzog@iastate.edu}
}
\thanks{Our efforts were partially supported under grants
 NSF-DMS-1712427 (MC),  NSF-DMS-1954264 (MC),
NSF-DMS-1816408 (JF), 
 NSF-DMS-1855504 (DPH)} 
\definecolor{Red}{rgb}{0.7,0,0.1}
\definecolor{Green}{rgb}{0,0.7,0}
\numberwithin{equation}{section}
\newtheorem{Theorem}{Theorem}[section]
\newtheorem{Proposition}[Theorem]{Proposition}
\newtheorem{Lemma}[Theorem]{Lemma}
\newtheorem{Corollary}[Theorem]{Corollary}
\theoremstyle{definition}
\newtheorem{Definition}{Definition}[section]
\newtheorem{Example}{Example}[section]
\newtheorem{Assumption}{Assumption}
\makeatother\newtheorem{Remark}[Theorem]{Remark}
\newcommand{\PP}{\mathbf{P}}
\newcommand{\N}{\mathbf{N}}
\newcommand{\Y}{\mathcal{Y}}
\newcommand{\s}{\text{stoc}}
\newcommand{\K}{\mathcal{K}}
\newcommand{\E}{\mathbf{E}}
\newcommand{\RR}{\mathbf{R}}
\newcommand{\C}{\mathcal{C}}
\newcommand{\EE}{\mathcal{E}}
\newcommand{\A}{\mathcal{A}}
\newcommand{\inter}{\text{interior}}
\newcommand{\ustar}{U^*}
\begin{document}  \begin{abstract}
We study the almost sure behavior of solutions of stochastic differential equations (SDEs) as time goes to zero. Our main general result establishes a functional law of the iterated logarithm (LIL)
that applies in the setting of SDEs with degenerate noise satisfying the \emph{weak H\" ormander condition} but not the \emph{strong H\"{o}rmander condition}.  That is, SDEs in which the drift terms must be used in order to conclude hypoellipticity. As a corollary of this result, we obtain the almost sure behavior as time goes to zero of a given direction in the equation, even if noise is not present explicitly in that direction.  The techniques used to prove the main results are based on large deviations applied to a non-trivial rescaling of the
original system. In concrete examples, we show how to find the proper rescaling to obtain the functional LIL.  Furthermore, we apply the main results to the problem of identifying regular points for hypoelliptic diffusions.  Consequently, we obtain a control-theoretic criteria for a given point to be regular for the process.  \end{abstract}

\maketitle

\section{Introduction}
The law of the iterated logarithm (LIL) for an i.i.d. sequence of random variables $X_1, X_2, \ldots, X_n, \ldots$ with mean zero and unit variance reads
\begin{align}
\label{eqn:LILiid}
\limsup_{n\rightarrow \infty} \frac{\pm S_n}{\sqrt{2n \log \log n}} =1 \quad \text{a.s.}\,,
\end{align} 
where $S_n := X_1 + X_2 + \dots +X_n$ and $\log$ denotes the natural logarithm.  The formula~\eqref{eqn:LILiid}  was first established by Hartman and Whintner in 1941~\cite{HW_41} as a generalization of earlier works of Khinchin in 1924~\cite{Khin_24} and Kolmogorov in 1929~\cite{Kol_29}.  Analogously, an LIL holds for a standard, real-valued Brownian motion $W_t$ as $t\rightarrow \infty$ by replacing $n$ by $t$ and $S_n$ by 
$W_t$ in~\eqref{eqn:LILiid}.  Furthermore, one can use~\eqref{eqn:LILiid} for the Brownian motion at time infinity to obtain an LIL at time zero
\begin{align}
\label{eqn:LILbm}
\limsup_{t\rightarrow 0^+} \frac{\pm W_t}{\sqrt{2t \log \log t^{-1}}}= 1 \quad \text{a.s.}
\end{align}
by Brownian inversion.  

Note that LILs provide an asymptotic window, for example $[-\sqrt{2t \log \log t^{-1}}, \sqrt{2t \log \log t^{-1}}]$ for the process $W_t$ as $t\rightarrow 0^+$, complementing the usual central limit scaling.  More precisely, it follows that the set of limit points of the scaled processes 
$S_n/ \sqrt{2n\log \log n}$ as $n\rightarrow \infty$ or 
$W_t/\sqrt{2t \log \log t^{-1}}$ as $t\rightarrow 0^+$ is the interval $[-1,1]$~\cite{HW_41}.  A further generalization of this limit set analysis is due to Strassen~\cite{Stras_64}, which for $W_t$ at $t=0$ establishes that, for almost every $\omega$, the set of limit points (in the space of continuous paths $\C([0,1]; \RR)$)
of the family  
\begin{align}
\label{eqn:scaledbm}
Y_t^\epsilon(\omega):= \frac{W_{\epsilon t}(\omega)}{\sqrt{2\epsilon \log \log \epsilon^{-1}}}, \,\,\,\, t\in [0,1], 
\end{align}
as $\epsilon \rightarrow 0^+$ is the set of functions $f\in \C^0([0,1]; \RR)$ with $f_0=0$ and $\int_0^1 |\dot{f}_s|^2 \, ds \leq 1$.  Here, 
$$
\C^0([0,1]; \RR^k):= \{ f \in \C([0,1]; \RR^k)\,: \, \dot{f}\in L^2([0,1])\}.
$$
Observe that, by the fundamental theorem of calculus and Jensen's inequality, the condition $\int_0^1 |\dot{f}_s|^2 \, ds \leq 1$ implies that $|f_1| \leq 1$.  By choosing $f_s=\pm s$, the extremal values $\pm 1$ of the pointwise limit set $[-1,1]$ are attained, and, by setting $t = 1$ in~\eqref{eqn:scaledbm}, we obtain~\eqref{eqn:LILbm} as a corollary of Strassen's result.    

The goal of this paper is to provide a framework for establishing Strassen-type LILs at time zero that applies in the setting of \emph{weakly hypoellptic} diffusions.  To formulate our results,  
we fix positive integers $d,k\in \N$, a non-empty, open set $U\subset \RR^d$, and consider an It\^{o} stochastic differential equation (SDE) on $U$ of the form\begin{align}
\label{eqn:sdemain}
\begin{cases} dx_t = \tilde{b}(x_t) \, dt + \tilde{\sigma}(x_t) \, dB_t \,,\\
x_0=x \in U , 
\end{cases}
\end{align}
where $B_t$ is a standard, $k$-dimensional Brownian motion defined on a probability space $(\Omega, \mathcal{F}, \PP)$.  In \eqref{eqn:sdemain}, 
we assume $\tilde{b}:U\rightarrow \RR^d$ and $\tilde{\sigma}: U\rightarrow M_{d\times k}$ are \emph{locally Lipschitz} on $U$; that is, Lipschitz continuous on every compact subset of 
$U$, while $M_{d\times k}$ denotes the set of $d\times k$ matrices with real-valued entries.  Under these assumptions, the solution~\eqref{eqn:sdemain} can be defined pathwise until the first time $\tau(x_\cdot)$ at which $x_t$ exits $U$.  If $\tau(x_\cdot)$ is finite with positive probability, we fix a \emph{death state} $\Delta \notin U$ and set $x_t = \Delta$ for $t\geq \tau(x_\cdot)$.  

In this paper, we focus on degenerate diffusions, or equivalently on matrices $\tilde{\sigma} (x)$ that have rank strictly less than $d$ at the initial condition $x$,  
 so that the dynamics $x_t$ defined by~\eqref{eqn:sdemain} is not trivially dominated at time zero in every direction by the process $x+ \tilde{\sigma}(x) B_t$.  Otherwise, a functional LIL can be readily obtained by rescaling the equation~\eqref{eqn:sdemain} according to the LIL Brownian scaling~\eqref{eqn:scaledbm} and passing to a (functional) limit using the theory of large deviations~\cite{Az_80, Bal_88, Bal_86}.  To  see why, 
 suppose for simplicity that $\tilde{\sigma}$ is a constant $d\times k$ matrix on $U$ and all expressions below are well-defined. Then,  the rescaled process
\begin{align}
\label{eqn:scaling1}
y^\epsilon_t := x+\frac{x_{\epsilon t} -x}{\sqrt{2\epsilon \log \log \epsilon^{-1}}}, \,\,\,\, t\in [0,1], \,\epsilon >0,
\end{align}  
satisfies the integral equation
\begin{align}
\label{eqn:intyeps}
y_t^\epsilon = x+ \int_0^t \frac{\sqrt{\epsilon}}{\sqrt{2 \log \log \epsilon^{-1}}} b(x_{\epsilon s}) \, ds +  \frac{ \tilde{\sigma} B_{\epsilon t}}{\sqrt{2 \epsilon \log \log \epsilon^{-1}}}.  
\end{align}
Since the integral in~\eqref{eqn:intyeps} is small in $\epsilon$, one then expects, and indeed it can be proved that, for almost all $\omega$, the set of limit points of $y^\epsilon_\cdot(\omega)$ in $\C([0,1] ; \RR^k)$ as $\epsilon \rightarrow 0$ is 
precisely the set of $g\in \C^0([0,1];\RR^d)$ of the form 
\begin{align}
g_t &= x+ \tilde{\sigma} f_t
\end{align}
for some $f\in \C^0([0,1]; \RR^k)$ with $f_0=0$ and $\int_0^1 |\dot{f}_s|^2 \, ds \leq 1$. Thus if $\text{rank}(\tilde{\sigma}(x)) = d$, then the asymptotic behavior of every component of $y_t^\epsilon$, and consequently every component of $x_t$, 
can be readily characterized. 

On the other hand, if $\text{rank}(\tilde{\sigma}(x))<d$, then the same rescaling~\eqref{eqn:scaling1} is valid but less informative.  In particular, for the directions that are in the range of $\tilde{\sigma}(x)$, the argument above gives 
correct asymptotic behavior.  However, for the directions in $\RR^d$ perpendicular to the range of $\tilde{\sigma}(x)$, the limiting trajectories are constant almost surely, meaning that the dynamics restricted to this subspace is finer and thus a different scaling is required.  Even if one can heuristically estimate the right scale in the perpendicular directions, more work is needed to establish Strassen's law for the corresponding process.

In this direction, more general Strassen-type LILs for stochastic differential equations~\eqref{eqn:sdemain} 
can be found in the pioneering works of Baldi~\cite{Bal_86} (at time infinity) and later Caramellino \cite{Car_98} (at time zero).  Both works are limited by the permissible scaling transformations (called a \emph{sequence of contractions}) applied to the diffusion to obtain the LIL.  In particular, such scalings do not allow for \emph{weakly hypoelliptic} diffusions; that is, diffusions that satisfy the \emph{weak H\"{o}rmander} condition but not the \emph{strong H\"{o}rmander} condition (see Section~\ref{sec:examples} for further information).  Intuitively, weakly hypoelliptic diffusions are those in which the noise must spread through the drift so that the process does not live on a lower-dimensional manifold of $U$ in small times.  On the other hand, strongly hypoelliptic diffusions are those in which noise only spreads through the diffusion matrix, i.e. the drift is not needed, in order to reach all directions of the phase space.  Weakly hypoelliptic diffusions arise in a number of natural settings, from finite-dimensional stochastic models in turbulence, see~\cite{Bec_05, Bec_07, BK_20, FGHH_20, GHW_11, HN_21, Rom_04, EMat_01, ZelHai_21}, to canonical models in statistical mechanics and machine learning, see~\cite{Cam_21, Cun_18, Gia_19, HerMat_19, LSS_20, Stoltz_10}, where local almost sure behavior at time zero is not understood precisely but is nevertheless important.  Usually, one can estimate the behavior at time zero by finding the support of the process using control theory via the Support Theorems~\cite{SV_72ii, SV_72i}.  This usually gives the behavior of the process in small times with positive, usually very small, probability~\cite{HerMat_15ii}.  The LILs deduced here provide a refinement of the support of the diffusion in small times.

In this paper, we improve upon the main results in~\cite{Bal_86, Car_98} by showing more general scaling transformations are permitted to obtain the functional LIL using the theory of large deviations. Specifically, our methods allow for different scalings in each component which need not be functions of the standard elliptic/Brownian scaling $\sqrt{\epsilon \log \log \epsilon^{-1}}$ as in~\cite{Bal_86, Car_98}, but can be general regularly varying functions in $\epsilon$ as $\epsilon \rightarrow 0^+$.  See Definition~\ref{def:1} in conjunction with Definition~\ref{def:2} below.    In addition, in the setting of SDEs with additive noise, our transformations can in fact be time dependent (see Example~\ref{ex:IK2}), which is needed for degenerate diffusions with non-vanishing drift at the initial condition.  There, because of the time dependence, the rescaled problem has a more complicated structure, which is why we restrict to the case of additive noise.  Furthermore, we apply the main results to several nontrivial examples, e.g. the Iterated Kolmogorov equation in general dimensions and a stochastic Lorenz '96 model, capturing the a.s. behavior in each of these equations in each direction as time goes to zero started from the origin.

Our original motivation for LILs in the setting of hypoelliptic diffusions was to derive a probabilistic method for determining whether certain boundary points are \emph{regular} or \emph{irregular} (cf. Section~\ref{sec:regular}).  Such information is crucial when solving second-order linear hypoelliptic boundary-value problems, for example the Dirichlet or Poisson problems, in a domain~\cite{Ok_03}.  Understanding when a particular point is regular or irregular is a long-standing open question and we refer the reader to the work of Kogoj~\cite{Kog_12, Kog_17} which employs analytical methods from PDEs to provide sufficient conditions for classical solvability of the Dirichlet problem and Harnack-type estimates.  In this paper, we will use the LIL to derive a control theoretic condition for a given boundary point to be regular or irregular (see Section~\ref{sec:regular}).

The organization of this paper is as follows.  In Section~\ref{sec:setup}, we outline notation and state our main general results while in Section~\ref{sec:examples} we apply these results to concrete examples.  We recommend the reader not familiar with the methods to  first loosely read Section~\ref{sec:examples} to obtain some ideas of how to arrive at an LIL in the weakly hypoelliptic setting before reading Section~\ref{sec:setup}.  Section~\ref{sec:ld} outlines the needed results from the theory of large deviations to establish the main general results pertaining to the LILs, which are proved in Section~\ref{sec:proof} and Section~\ref{sec:lem}.  In Section~\ref{sec:regular}, we derive our criteria for a point on the boundary to be regular or irregular.  There, we also discuss applications of this criteria to the design of piecewise $C^1$ boundaries on which all points are regular.

\section{Setting, Notation and Main Results}
\label{sec:setup}
The setup in this section is similar to that in Baldi~\cite{Bal_86} and Caramellino~\cite{Car_98}, but with several differences.  First,  
our setting is slightly more general, which allows for more general transformations of the original process; that is, the process that satisfies~\eqref{eqn:sdemain} above.  Second, 
we work primarily in the space of \emph{explosive paths}, as opposed to continuous paths, defined below, and consequently, we employ notation and results from Azencott~\cite{Az_80}.  Importantly, this allows 
us to work around issues of finite-time explosion in both the SDE and its limiting ODE.  In particular, we can remove Assumption (A) (iii) of~\cite{Car_98}, although in many examples this condition is satisfied.   

\subsection{Laws of the iterated logarithm and large deviations}
The crucial ingredient in the proof of the functional LIL is a change of coordinates, i.e. a rescaling, for the system~\eqref{eqn:sdemain}, allowing one to reformulate the problem using the theory of large deviations.  Unless the diffusion~\eqref{eqn:sdemain} is uniformly elliptic at the initial state $x$, the change of coordinates varies depending on the dynamics.  
To see how to construct such a transformation, we provide concrete examples in Section~\ref{sec:examples}.         

Unlike in~\cite{Bal_86, Car_98}, instead of assuming a particular transformation, 
we simply associate to equation~\eqref{eqn:sdemain} a small parameter $\epsilon_*>0$ and family of processes $\{y^\epsilon\}_{\epsilon\in (0, \epsilon_*]}$ satisfying an SDE of the form 
\begin{align}
\label{eqn:sdeas}
\begin{cases}
d y_t^\epsilon = b_\epsilon(y_t^\epsilon) \, dt + \frac{1}{\sqrt{r(\epsilon)}} \sigma_\epsilon(y_t^\epsilon) \, dB_t^\epsilon,\\
 y_0^\epsilon = x,
 \end{cases}
\end{align}
where for each $\epsilon \in (0, \epsilon_*]$, $B_t^\epsilon$ is a standard $k$-dimensional Brownian motion,  
\begin{align}
r(\epsilon):= \log \log \epsilon^{-1}
\end{align}
 and the coefficients $b_\epsilon, \sigma_\epsilon$ satisfy the following conditions.
 
\begin{Assumption}\label{assump:1}~
There exist $\epsilon_* > 0$, a non-empty open set $\ustar \subset \RR^d$ (not necessarily the same as $U$) and locally Lipschitz  functions $b:\ustar\rightarrow \RR^d$ and $\sigma: \ustar\rightarrow M_{d\times k}$ such that the following properties hold. 
\begin{itemize}
\item[(i)]  For every $\epsilon \in (0, \epsilon_*]$, there is an open set $U_\epsilon \subset \RR^d$ such that  $U ^*\subset U_\epsilon$ and the coefficients $b_\epsilon: U_\epsilon \rightarrow \RR^d$, $\sigma_\epsilon: U_\epsilon\rightarrow M_{d\times k}$ 
are locally Lipschitz on $U_\epsilon$. 
\item[(ii)]  For every compact $K \subset \ustar$, 
 \begin{align*}
\lim_{\epsilon \rightarrow 0^+}  \sup_{y \in K} |b_\epsilon(y) - b(y)| = 0 \,, \\ 
\lim_{\epsilon \rightarrow 0^+} \sup_{y \in K} \|\sigma_\epsilon(y) - \sigma(y) \| = 0 \,,
\end{align*}   
where $\|\cdot \|$ denotes a matrix norm.  
\end{itemize}
\end{Assumption}
In Figure~1, we have provided a sketch of all of some of the objects introduced thus far, with a generic ``mapping" $\Phi_\epsilon$ relating the two processes, $x_t$ and $y_t^\epsilon$.  
\begin{Remark}
Note that $b$ and $\sigma$ in Assumption \ref{assump:1} are not the same as $\tilde{b}$ and $\tilde{\sigma}$ in equation~\eqref{eqn:sdemain}.  One should think of~\eqref{eqn:sdeas} 
as a rescaled version of~\eqref{eqn:sdemain}, where $b_\epsilon$ and $\sigma_\epsilon$ depend on $\tilde{b}$ and $\tilde{\sigma}$.   
\end{Remark}

\begin{Remark}
When comparing the noise terms in~\eqref{eqn:intyeps} and in~\eqref{eqn:sdeas}, note that the $\sqrt{\epsilon}$ in the denominator in~\eqref{eqn:intyeps} is included in $B_t^\epsilon$ in~\eqref{eqn:sdeas} while the $\sqrt{2}$ is included in $\sigma_\epsilon$.    
\end{Remark}

\tikzset{
    circ/.pic={ 
    \fill (0,0) circle (2pt);
    }
}
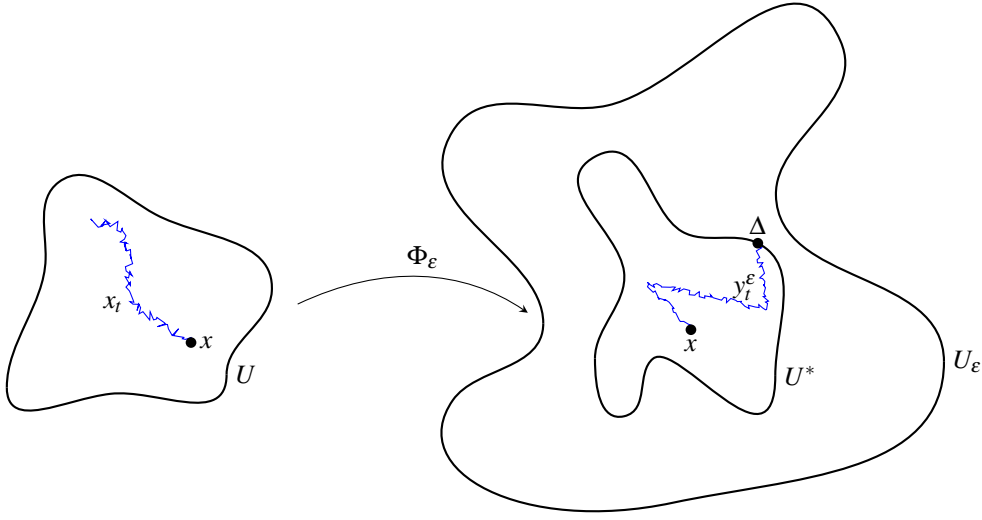
\begin{figure}
\label{fig:1}
\begin{tikzpicture} 
\def\d{1.7};
\def\ddd{0.5*\d}
\coordinate (a1) at (0.7*\ddd,0*\ddd);
\coordinate (a2) at (2.3*\ddd,-0.1*\ddd);
\coordinate (a3) at (4.1*\ddd,0.2*\ddd);
\coordinate (a4) at (4.8*\ddd,1.5*\ddd);
\coordinate (a5) at (3*\ddd,2.7*\ddd);
\coordinate (a6) at (1.5*\ddd,3.2*\ddd);
\coordinate (a7) at (1.3*\ddd,2*\ddd);

\draw[use Hobby shortcut,thick] ([out angle=-90]a1)..(a2)..([in angle=-90]a3); 
\draw[use Hobby shortcut,thick] ([out angle=90]a3)..(a4)..(a5)..(a6)..(a7)..([in angle=90]a1);

\coordinate (S) at (2.5*\d,0);

\coordinate (b1) at (2*\d,0.5*\d);
\coordinate (b2) at (1.3*\d,-0.1*\d);
\coordinate (b11) at (3*\d,-0.9*\d);
\coordinate (b3) at (5.1*\d,0.2*\d);
\coordinate (b4) at (3.8*\d,1.5*\d);
\coordinate (b5) at (4.3*\d,2.7*\d);
\coordinate (b6) at (2.5*\d,2.2*\d);
\coordinate (b7) at (1.3*\d,2*\d);

\draw[use Hobby shortcut,thick] ([out angle=-90]$(b1)+(S)$)..($(b2)+(S)$)..($(b11)+(S)$)..([in angle=-90]$(b3)+(S)$); 
\draw[use Hobby shortcut,thick] ([out angle=90]$(b3)+(S)$)..($(b4)+(S)$)..($(b5)+(S)$)..($(b6)+(S)$)..($(b7)+(S)$)..([in angle=90]$(b1)+(S)$);

\coordinate (Ss) at (4*\d,0);

\def\dd{.45*\d};

\coordinate (c1) at (2*\dd,0.5*\dd);
\coordinate (c2) at (2.5*\dd,-0.5*\dd);
\coordinate (c11) at (3*\dd,.5*\dd);
\coordinate (c3) at (5.1*\dd,0.2*\dd);
\coordinate (c4) at (4.8*\dd,2.5*\dd);
\coordinate (c5) at (3.5*\dd,2.7*\dd);
\coordinate (c6) at (2*\dd,4*\dd);
\coordinate (c7) at (2.5*\dd,2*\dd);

\draw[use Hobby shortcut,thick] ([out angle=-90]$(c1)+(Ss)$)..($(c2)+(Ss)$)..($(c11)+(Ss)$)..([in angle=-90]$(c3)+(Ss)$); 
\draw[use Hobby shortcut,thick] ([out angle=90]$(c3)+(Ss)$)..($(c4)+(Ss)$)..($(c5)+(Ss)$)..($(c6)+(Ss)$)..($(c7)+(Ss)$)..([in angle=90]$(c1)+(Ss)$);

d1 = (c1)!0.5!(c2);

\pgfmathsetseed{2236}

\draw [decorate, decoration={random steps,segment length=1pt,amplitude=2pt}] [blue] plot [tension=1] coordinates { ($(a2)!0.5!(a4)$) ($(a2)!0.5!(a5)$) ($(a5)!0.3!(a7)$) ($(a3)!0.8!(a6)$)};

\coordinate (d1) at ($(c2)!0.5!(c4)$);
\coordinate (d2) at ($(c1)!0.6!(c5)$);
\coordinate (d3) at ($(c3)!0.5!(c4)$);
\coordinate (d4) at ($(c3)!0.5!(c1)$);

\draw [decorate, decoration={random steps,segment length=1.1pt,amplitude=1.2pt}] [blue] plot [tension=1] coordinates { ($(d1)+(Ss)$) ($(d2)+(Ss)$) ($(d3)+(Ss)$) ($(c4)+(Ss)$)};

\draw (a3) node[right] {$U$};
\draw ($(c3)+(Ss)$) node[right] {$U^*$};
\draw ($(b3)+(S)$) node[right] {$U_\epsilon$};

\draw ($(a2)!0.5!(a4)$) node[right] {$x$} pic{circ};
\draw ($(a2)!0.5!(a5)$) node[left] {$x_t$};
\draw ($(d3)+(Ss)$) node[above left] {$y_t^\epsilon$};

\draw ($(d1)+(Ss)$) node[below] {$x$} pic{circ};
\draw ($(c4)+(Ss)$) node[above] {$\Delta$} pic{circ};

\coordinate (e1) at ($(b1)+(S)$);
\coordinate (e2) at ($(a4)+(0.2*\d, -0.1*\d)$);

\draw[-stealth,shorten >= 7pt] (e2.north) to[bend left] node[midway,above] {$\Phi_\epsilon$} (e1.north);

\end{tikzpicture}

\caption{A diagram representing a cartoon relationship between the process $x_t$ and $y_t^\epsilon$ via the generic mapping $\Phi_\epsilon$.  $\Phi_\epsilon$ should be thought of as a rescaling of the equation~\eqref{eqn:sdemain}.  Using the rescaling $\Phi_\epsilon$, typically the domain $U_\epsilon$ grows as $\epsilon \downarrow 0$.}
\end{figure}

     \subsection{The space of explosive trajectories $\EE([0,t]; V)$}
In order to treat possible  finite-time blow-up of either $y^\epsilon$ in~\eqref{eqn:sdeas} 
or $g_t$ (see \eqref{eqn:det} below), we work in the space of \emph{explosive trajectories}.  That is, for any $m\in \N$, $t\in (0, 1]$ and open $V\subset \RR^m$, we again, offering a slight abuse of notation, fix a death state $\Delta \notin V$ and let $V \cup \{ \Delta\}$ be the Alexandroff compactification of $V$.  Then,  $\EE([0,t];V)$ denotes the space of continuous mappings 
\begin{align}
g:[0,t]\rightarrow V\cup \{ \Delta \}
\end{align}
 such that if $g_{t_0}=\Delta$ for some $t_0\in [0,t]$, then $g_s= \Delta$ for all $s\in [t_0,t]$.  For any $x\in V$ and $t\in (0,1]$, we define $\EE_x([0,t];V) := \{ g\in \EE([0,t];V)\,: \, g_0 =x\}$.  If $g\in \EE([0,t];V)$, let
 \begin{align}\label{eqn:bup}
 \tau_t(g)=\inf\{ s \in [0, t] \, : \, g_s=\Delta\}
 \end{align}
denote the \emph{time of explosion} of $g$, where we set $\inf \,\emptyset = \infty$. Define 
\begin{equation}
\EE^0([0, t];V) = \{g\in\EE([0,t]; V) : \dot{g}\in L^2([0, s]) \textrm{ for any } 
s <  \tau_t(g), s \leq t \} \,,
\end{equation} 
 where $\dot{g}$ denotes the time derivative of $g$.  In other words, $g\in \EE^0([0, t];V)$ means that $g$ belongs locally
 to the Sobolev space $H^1([0, \tau_t(g)) \cap[0, t]; V)$. 
We denote by
$\C([0,t];V)$ and $\C_x([0,t]; V)$, $x\in V$, $t\in (0,1]$, respectively  the space of continuous $g:[0,t]\rightarrow V$ and continuous $g:[0,1]\rightarrow V$ with $g_0=x$.  
In particular, $g \in \C([0,t];V)$ implies $\tau_t(g) = \infty$. 
Let 
\begin{align}
\C^0([0,t];V)= \C([0,t];V) \cap \EE^0([0,t];V)
\end{align}  
and observe that $\C^0([0,t];V)$ coincides with the Sobolev space $H^1([0, t], V)$. 

It is important to equip the space $\EE([0,t]; V)$ with a topology compatible with the topology 
of $\C([0,t]; V)$.  As in~\cite{Az_80}, we define the closed sets in $\EE([0,t]; V)$ by specifying convergent
sequences.  That is, we say that a sequence $g_n \in \EE([0,t];V)$ \emph{converges} to $g\in \EE([0,t]; V)$ as $n
\rightarrow \infty$ if $g_n$ converges uniformly to $g$ on compact subsets of $[0, \tau_t(g)) \cap [0, t]$, or equivalently, 
 if for any $s\in [0, \tau_t(g))$, $s \leq t$  there exists $N\in \N$ such that 
$\{g_n\}_{n\geq N} \subset \C([0, s]; V)$ 
 and  $g_n \to g$  as $n \to \infty$ in the space  $\C([0,s];V)$.  The topologies on $\C^0([0,t];V)$ and $\EE^0([0,t];V)$ are then induced by the topologies on, respectively, $\C([0,t]; V)$ and $\EE([0,t]; V)$ intersected with $H^1_{\textrm{loc}}([0, \tau_t(g))\cap[0, t]; V)$.

For $g,h \in \EE([0,t];V)$ and $s\leq t$, we define
\begin{equation}\label{eqn:dfmd}
 d_{s}(g,h) =  \begin{cases}
 \sup_{u\in [0,s]} |g_u-h_u|\,, & \text{ if } s< \tau_t(g) \wedge \tau_t(h) \\
 \infty \, & \text{ otherwise}
  \end{cases}.
\end{equation}
Clearly, $d_s(g, g) = 0$ if $s< \tau_t(g)$ and $d_s(g, h) = d_s(h, g)$, where both sides are either infinite or finite and equal.  Also note that for any $f,g,h \in \EE([0,t];V)$ it follows that
\begin{equation}\label{eqn:trin}
 d_{s}(f,g) \leq  d_{s}(f, h) +  d_{s}(h, g)
\end{equation}
since the above reduces to the usual triangle inequality in $\C([0,s];V)$ if $s< \tau_t(f) \wedge \tau_t(g) \wedge \tau_t(h)$.  On the other other hand if $s\geq \tau_t(f) \wedge \tau_t(g) \wedge \tau_t(h)$, then the righthand side is always infinite, so the inequality~\eqref{eqn:trin} is trivially satisfied. 

\begin{Remark}
Note that $d_s$ is not a metric on $\EE([0,t];V)$ since, for example, $d_s(g,h)=\infty$ if $g,h$ are identically equal to $\Delta$.  However, we do not need $d$ to be a metric below. 
\end{Remark}

 For any $A\subset \C([0,t]; V)$ and any $g \in \EE([0,t]; V)$ we set 
 \begin{align*}
 d_t(g, A) = \inf\{ d_t(g, h) \, : \, h\in A\}.  
 \end{align*}

\begin{Remark}
If $g\in \EE([0,1]; U)$, then its restriction to $[0, t]$, $t\in (0,1]$, also belongs to $\EE([0,t]; U)$.  Below, we slightly abuse notation by denoting this restriction $g$ as well.     
\end{Remark}

\subsection{Statement of the main general results}
To employ a large deviation principle for $y_t^\epsilon$ solving~\eqref{eqn:sdeas}, the following
 family of deterministic ODEs on the open set $U^*$ is of particular importance:
\begin{align}
\label{eqn:det}
\begin{cases}
\dot{g}_t = b(g_t) + \sigma(g_t) \dot{f}_t \,, \\
g_0=x \,,
\end{cases}
\end{align}
where $b, \sigma, U^*$ are as in Assumption~\ref{assump:1}, $f\in \C^0([0,1]; \RR^k)$, and $x\in U^*$. By classical results for equations with locally Lipschitz coefficients (see, for example,~\cite[Proposition 2.3, p. 75]{Az_80}), if $\tau(g)$ denotes the first exit time of the solution of~\eqref{eqn:det} from $U^*$,  
 Assumption~\ref{assump:1} ensures that 
 for any $f\in \C^0([0,1];\RR^k)$ and $x\in \ustar$, the equation~\eqref{eqn:det} has a unique solution $g \in \EE^0([0,1];\ustar)$ provided we set $g(t) = \Delta$ for any $t \geq \tau(g)$. We also assume that \eqref{eqn:det} is always satisfied for any $t \geq \tau(g)$ and we 
 let  $g=S_x(f)$.  Here, $S:\ustar\times \C^0([0,1];\RR^k)\rightarrow \EE^0([0,1];\ustar)$ is 
 the mapping given by 
 \begin{align} \label{eqn:cnt}
 (x, f) \mapsto g = S_x(f) \,.
 \end{align}
In general, the mapping $S$ is not continuous. 
However, for every $a\geq 0$ the restriction of $S$ to $\ustar\times \C_a$ where
 \begin{align}\label{eqn:dca}
 \C_a:= \{ f\in \C^0([0,1];\RR^k) \, : \, \textstyle{\int}_0^1 |\dot{f}_t|^2 \, dt \leq a\}\,,
 \end{align}
 is continuous~\cite[Proposition~2.8, p.75]{Az_80}. Furthermore, by standard arguments (considering the equation for the difference of solutions and estimating using Gronwall's inequality) 
Assumption~\ref{assump:1}  implies that 
for any compact sets 
$K,L\subset \ustar$ with $K\subset \inter(L)$ and for any $a>0$, there exist constants 
$T>0, C>0$ such that
  \begin{itemize}
  \item[(p1)]  For all $x \in K$ and all $f\in \C_a$, $g=S_x(f)$ has $\tau_1(g) >T$ and $g([0, T])\subset L$.  
  \item[(p2)]  For all $x,y \in K, f\in \C_a$, and $s\leq T$
  \begin{align*}
  d_{s}(S_x(f), S_y(f)) \leq e^{Cs} |x-y|.  
  \end{align*}
  \item[(p3)]  If $\{f_n\} \subset \C_a$ converges in $H^1([0,1])$ to $f\in \C_a$, then  $S_x(f_n) \to S_x(f)$ in $\C^0([0, T];\ustar)$. 
 \end{itemize}  

An essential notion for us is the \emph{Cramer transform} $\lambda: \EE([0,1]; \ustar)\rightarrow [0, \infty]$
defined by
  \begin{align}
  \label{def:lambda}
  &\lambda(g) = \inf\{\tfrac{1}{2} \textstyle{\int_0^1} |\dot{f}_t|^2 \, dt \, : \, f\in \C^0([0,1]; \RR^k) \text{ and } S_{g_0}(f)=g \} \,, 
  \end{align}
  where we set $\inf\, \emptyset = \infty$ and $\dot{f}_s = 0$ for any $s \geq \tau_1(g)$, or equivalently $S_x(f)(s)=g_s$ is satisfied for any $f$ if $s \geq \tau_1(g)$. 
We remark that we can choose $\dot{f}_t = 0$ for $t > \tau_1(g)$ since in this time range the equation \eqref{eqn:det} is satisfied by definition for any $f$.   
  Note that 
  our definition is equivalent to one introduced in \cite[(4), Chapter IV]{Az_80} by \cite[Proposition 2.10, Chapter IV]{Az_80}.  Now \cite[Proposition 2.10, Chapter IV]{Az_80} yields that 
   if $\lambda(g) < \infty$, 
  then the infimum in \eqref{def:lambda} is attained, $g\mapsto \lambda(g)$ is lower semi-continuous, and 
 for any compact set $K \subset \ustar$ and any $a\geq 0$, the set 
  \begin{align}
  \label{eqn:Kcrosslambda}
\{ g\in \EE([0,1];\ustar)\, : \, g_0 \in K, \, \lambda(g) \leq a\}
  \end{align}
  is compact in $\EE([0,1];\ustar)$.  It thus follows that for every $x\in \ustar$ the set 
\begin{align}
\K_x = \{ g\in \EE_x([0,1]; \RR^d) \, : \, \lambda(g) \leq 1 \} 
\end{align}
is also compact by choosing $K=\{ x\}$ and $a=1$ in~\eqref{eqn:Kcrosslambda}. 

By property (p1) and the relation~\eqref{def:lambda}, for any compact set $L \subset \ustar$ and $x \in \inter(L)$ 
there exists a time
\begin{align}
\label{eqn:tdef}
t_*=t_*(x)\in (0,1]
\end{align}
 such that for all $g\in \K_x$, $g([0, t_*]) \subset \inter(L)$. Let $\K_x(t_*)$ be the set of functions in $\K_x$ restricted to $\EE([0,t_*]; \ustar)$.  Our goal is to show that, under Assumption~\ref{assump:1} and Assumption~\ref{assump:2} (defined below) $\K_x(t_*)$ is the set of limit points, almost surely, of $y^\epsilon$ solving~\eqref{eqn:sdeas} as $\epsilon \rightarrow 0$ (when restricted to $[0, t_*]$).  
 Then, we show how one can relate $y^\epsilon$ back to the original process $x_t$ solving~\eqref{eqn:sdemain} to obtain the desired functional LIL.

\begin{Assumption}
\label{assump:2}
Consider the compact set $L \subset \ustar$ and constant $t_*>0$ defined in~\eqref{eqn:tdef}.  The process $y^\epsilon$ with $x \in \inter(L)$ solving~\eqref{eqn:sdeas} satisfies the following properties:
\begin{itemize}
\item[(i)]  For every $\delta >0$, there exists $c_0 \in (0, 1)$ such that for any $c\in (c_0, 1)$ there is a $\PP$-almost surely finite random variable $J = J(\omega, c)\in \N$ such that $j\geq J$ and $\epsilon \in [c^{j+1}, c^j]$ implies 
$y_{t}^\epsilon \in L$ for all $t\in [0, t_*]$ and 
\begin{align}\label{eqn:epj}
d_{t_*}( y^{c^j}, y^{\epsilon}) < \delta.  
\end{align}  
\item[(ii)] For $\epsilon_* > 0$ as in Assumption \ref{assump:1}, 
the mapping $\epsilon \mapsto y^\epsilon : (0, \epsilon_*)\rightarrow \EE_x ([0, t_*];\ustar) $ is continuous, $\PP$-almost surely.  

\end{itemize} 
\end{Assumption}

\begin{Remark}
Assumption~\ref{assump:2} essentially allows one to reduce the proof of the main result (Theorem~\ref{thm:main} below) to the countable sequence $\{ y^{c^{j}}\}$ instead of $\{ y^\epsilon \}$.    
\end{Remark}

\begin{Theorem}
\label{thm:main}
Suppose that Assumption~\ref{assump:1} and Assumption~\ref{assump:2} are both satisfied for some $\epsilon_* > 0$ and non-empty open $\ustar$. Fix compact $L \subset \ustar$,  $x\in \emph{\inter} (L)$ and 
$t_*=t_*(x)\in (0,1]$ as in~\eqref{eqn:tdef}. 
Then, for $\PP$-almost every $\omega$, we have the following conclusions:
\begin{itemize}
\item[(i)]  The set $\Y(\omega) := \{y^\epsilon(\omega)\}_{\epsilon \in (0, \epsilon_*]}$ is relatively compact in $\EE([0, t_*]; \ustar)$.
\item[(ii)]  $d_{t_*}(y^\epsilon(\omega), \K_x(t_*))\rightarrow 0$ as $\epsilon \rightarrow 0$.
\item[(iii)]  For every $h \in \K_x(t_*)$, $\{y^{\epsilon}(\omega)\}_{ \epsilon \in (0, \epsilon_*]}$ has a subsequence $\{y^{\epsilon_j(\omega,h) }(\omega)\}_{j=1}^\infty$ with $\epsilon_j(\omega,h) \downarrow 0$ as $j\rightarrow \infty$ such that 
\begin{align*}
d_{t_*}(y^{\epsilon_j (\omega,h)}(\omega), h)\rightarrow 0 \,\,\text{ as } \,\, j\rightarrow \infty.   
\end{align*}
\end{itemize}
\end{Theorem}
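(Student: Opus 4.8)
The plan is to transport the classical Strassen-type argument (as in Azencott~\cite{Az_80} and Baldi~\cite{Bal_86}) into the space of explosive trajectories $\EE([0,t_*];\ustar)$, using Assumption~\ref{assump:2} to discretize $\epsilon$ along a geometric subsequence $\epsilon = c^j$ and Assumption~\ref{assump:1} together with properties (p1)--(p3) and the compactness of~\eqref{eqn:Kcrosslambda} on the large-deviations side. Since the statement is a pathwise limit-set identity, I would split it into an \emph{upper bound} (parts (i)--(ii): eventually $y^\epsilon$ is trapped in $L$ and close to $\K_x(t_*)$) and a \emph{lower bound} (part (iii): every $h\in\K_x(t_*)$ is hit along a subsequence), each of which is a Borel--Cantelli argument fed by a large deviation estimate for a single rescaled SDE.

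First I would record the large deviation input. For fixed $c\in(0,1)$, the process $y^{c^j}$ solves~\eqref{eqn:sdeas} with small parameter $1/\sqrt{r(c^j)}$ in front of the noise; by the Azencott-type lower-semicontinuity and compactness results quoted after~\eqref{def:lambda}, together with (p1)--(p3), the family $\{y^{c^j}\}_j$ satisfies (after the usual localization of $\ustar$ to the compact $L$ so that explosion is a non-issue up to time $t_*$) a sample-path large deviation principle on $\EE([0,t_*];\ustar)$ with speed $r(c^j)=\log\log c^{-j}\sim \log j$ and rate function $\lambda$. Concretely: for any closed $F$ with $F\cap\K_x(t_*)=\emptyset$ one gets $\PP(y^{c^j}\in F)\le \exp(-(1-\eta)\,r(c^j)\inf_F\lambda)$ with $\inf_F\lambda>1$, and for any $h\in\K_x(t_*)$ and any neighborhood $B$ of $h$ one gets $\PP(y^{c^j}\in B)\ge \exp(-(1+\eta)\,r(c^j)\,\lambda(h))\ge \exp(-(1+\eta)\,r(c^j))$. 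Since $r(c^j)\sim\log j$, the first probability is summable in $j$ (giving, via Borel--Cantelli, that $y^{c^j}$ is eventually outside $F$, i.e. eventually within $\delta$ of $\K_x(t_*)$) while the second is \emph{not} summable; to run the second Borel--Cantelli I would pass to a sparse sub-subsequence $j_1<j_2<\dots$ along which the events $\{y^{c^{j_m}}\in B\}$ are independent (the noises $B^\epsilon$ in~\eqref{eqn:sdeas} may be taken independent, or one argues with a suitable increment decomposition as in the classical proof) and $\sum_m \PP(\cdot)=\infty$, yielding infinitely many hits.

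Having controlled the geometric subsequence, I would fill the gaps using Assumption~\ref{assump:2}(i): given $\delta>0$ choose $c_0$ and then $c\in(c_0,1)$ so that for $j\ge J(\omega,c)$ and $\epsilon\in[c^{j+1},c^j]$ one has $y^\epsilon_t\in L$ for $t\le t_*$ and $d_{t_*}(y^{c^j},y^\epsilon)<\delta$. Combining with the triangle inequality~\eqref{eqn:trin}: for the upper bound, $d_{t_*}(y^\epsilon,\K_x(t_*))\le d_{t_*}(y^\epsilon,y^{c^j})+d_{t_*}(y^{c^j},\K_x(t_*))<2\delta$ for all small $\epsilon$, which is exactly (ii); relative compactness (i) then follows since $\{y^\epsilon(\omega)\}$ is, for small $\epsilon$, within $\delta$ of the compact set $\K_x(t_*)$ (and the finitely many remaining $\epsilon\in[\epsilon_{**},\epsilon_*]$ contribute a compact continuous image by Assumption~\ref{assump:2}(ii)), so its closure is compact. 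For the lower bound (iii), fix $h\in\K_x(t_*)$; applying the gap estimate with a sequence $\delta=\delta_n\downarrow0$ (and the corresponding $c=c_n$, random threshold $J_n$) to the sub-subsequence times where $y^{c^{j_m}}$ lands in the $\delta_n$-ball around $h$, one extracts $\epsilon_n\downarrow0$ with $d_{t_*}(y^{\epsilon_n},h)\le d_{t_*}(y^{\epsilon_n},y^{c^{j_{m(n)}}})+d_{t_*}(y^{c^{j_{m(n)}}},h)\to0$. A standard diagonalization over a countable dense subset of $\K_x(t_*)$ (dense in the metric $d_{t_*}$, which makes sense since $\K_x(t_*)$ is compact) upgrades this to \emph{every} $h\in\K_x(t_*)$ on a single full-measure event, using lower semicontinuity of $\lambda$ and continuity of the restriction map to ensure the approximating balls can be taken inside $\K_x(t_*)$-neighborhoods of the right radius.

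The main obstacle I anticipate is not the Borel--Cantelli bookkeeping but making the large deviation principle genuinely usable in $\EE([0,t_*];\ustar)$: the map $S_x$ is only continuous on the sublevel sets $\ustar\times\C_a$, the rate function $\lambda$ is defined through explosive paths, and the coefficients $b_\epsilon,\sigma_\epsilon$ only converge to $b,\sigma$ locally uniformly on $\ustar$ (Assumption~\ref{assump:1}(ii)), so one must first localize to the compact $L$ and the time horizon $t_*$ chosen in~\eqref{eqn:tdef} — precisely so that all relevant trajectories with $\lambda\le1+\eta$ stay in $\inter(L)$ up to $t_*$ — and only then invoke the classical Freidlin--Wentzell/Azencott estimates for the frozen-coefficient SDE, afterwards removing the freezing via the uniform convergence in Assumption~\ref{assump:1}(ii) and a Gronwall estimate of the type underlying (p2). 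The second delicate point is the independence needed for the divergent half of Borel--Cantelli in part (iii); if the $B^\epsilon$ are not assumed independent this must be replaced by the classical trick of writing $y^{c^{j_{m+1}}}$ as a small perturbation of an increment independent of $\mathcal{F}_{c^{j_m}}$-measurable data, which requires the additive/Markovian structure and is where Assumption~\ref{assump:2} and the specific form of~\eqref{eqn:sdeas} do real work.
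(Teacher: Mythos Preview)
Your treatment of parts (i) and (ii) matches the paper's approach closely: first prove $d_{t_*}(y^{c^j},\K_x(t_*))\to 0$ along the geometric subsequence via the large-deviation upper bound and the first Borel--Cantelli lemma (this is Proposition~\ref{prop:1}(i)), then fill the gaps $\epsilon\in[c^{j+1},c^j]$ using Assumption~\ref{assump:2}(i), and finally deduce relative compactness from eventual proximity to the compact $\K_x(t_*)$ together with the continuity in Assumption~\ref{assump:2}(ii).

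The genuine gap is in part (iii). You propose to use the large-deviation lower bound $\PP(y^{c^j}\in B)\gtrsim j^{-(1+\eta)}$ and then run the second Borel--Cantelli lemma on a sparse sub-subsequence along which the events are independent. But the Brownian motions $B^{c^j}$ are \emph{not} independent: in the setting of the paper (see e.g.\ Corollary~\ref{cor:2} and the examples) one has $B^\epsilon_t=\epsilon^{-1/2}B_{\epsilon t}$ for a single underlying Brownian motion $B$, so the events $\{y^{c^j}\in B(h,\delta)\}$ are heavily correlated. Your fallback, ``writing $y^{c^{j_{m+1}}}$ as a small perturbation of an increment independent of $\mathcal{F}_{c^{j_m}}$-measurable data,'' is where the classical Strassen argument for Brownian motion lives, but for a general SDE with drift and state-dependent diffusion this decomposition is not available in any direct way, and you have not indicated how Assumption~\ref{assump:2} would supply it.

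The paper sidesteps this entirely. Instead of trying to prove independence for the $y^{c^j}$ events, it separates the problem into a Brownian piece and a stability piece. Theorem~\ref{thm:LD1} gives a quantitative statement of the form: if the rescaled noise $r(c^j)^{-1/2}B^{c^j}$ is uniformly close to $f$, then $y^{c^j}$ is close to $g=S_x(f)$, except on an event of probability $\le e^{-Rr(c^j)}$ (summable for $R>1$). The recurrence of the Brownian events $F_j=\{d_{t_*}(r(c^j)^{-1/2}B^{c^j},f)<a_*\}$ infinitely often is then a \emph{known} result for Brownian motion alone (Mueller~\cite{Mue_81}, Gantert~\cite{Gan_93}), where the delicate correlation analysis has already been done. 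Combining, $\PP(F_j\text{ i.o.})=1$ and $\PP(F_j\cap H_j^c\text{ i.o.})=0$ force $\PP(H_j\text{ i.o.})=1$. This is the content of Proposition~\ref{prop:2}, and it is the idea your proposal is missing. The boundary case $\lambda(h)=1$ is then handled, as you suggest, by approximation---the paper uses $f_n=(1-1/n)f$ and continuity of $S_x$ on $\C_1$ rather than a diagonalization, but the spirit is the same.
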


In Section~\ref{sec:examples} we make heavy use of the following corollary of Theorem~\ref{thm:main}, which 
 is a basic topological consequence of relative compactness and continuity.       
 
\begin{Corollary}
\label{cor:1}
Under the hypotheses of Theorem~\ref{thm:main}, let $X$ be a Hausdorff topological space and suppose that $F:\EE([0, t_*]; \ustar) \rightarrow X$ is continuous.  Then for $\PP$-almost every $\omega$, $F(\Y (\omega))$ is relatively compact in $X$ and $F(\K_x(t_*))$ is the limit set of $F(\Y(\omega))$ 
as $\epsilon \to 0$.    
\end{Corollary}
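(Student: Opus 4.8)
The plan is to fix a point $\omega$ lying outside all the relevant $\PP$-null sets, so that conclusions (i)--(iii) of Theorem~\ref{thm:main} hold at $\omega$ and the random variables appearing in Assumption~\ref{assump:2} are finite at $\omega$, and then to argue purely topologically: a continuous map into a Hausdorff space preserves relative compactness and carries limit sets to limit sets. For the first assertion, Theorem~\ref{thm:main}(i) gives that $\overline{\Y(\omega)}$ is compact in $\EE([0,t_*];\ustar)$, so $F(\overline{\Y(\omega)})$ is a compact --- hence, $X$ being Hausdorff, closed --- subset of $X$ containing $F(\Y(\omega))$; therefore $\overline{F(\Y(\omega))}$ is a closed subset of a compact set, hence compact, which is the assertion.

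The heart of the matter is the identification of the limit set. Recall that $p\in X$ belongs to the limit set of $F(\Y(\omega))$ as $\epsilon\to0$ precisely when every open $N\ni p$ contains $F(y^\epsilon(\omega))$ for arbitrarily small $\epsilon>0$. I would first record two facts. (a) By~\eqref{eqn:tdef} every $g\in\K_x(t_*)$ has $g([0,t_*])\subset\inter(L)$, and by Assumption~\ref{assump:2}(i) there is $\epsilon_0(\omega)>0$ such that $y^\epsilon(\omega)$ stays in the compact set $L\subset\ustar$ on $[0,t_*]$ for every $\epsilon\le\epsilon_0(\omega)$; consequently every trajectory relevant to the $\epsilon\to0$ analysis, together with all of $\K_x(t_*)$ and all of their limit points, lies in the slab $\C([0,t_*];L)$, on which --- because such paths cannot explode --- the topology of $\EE([0,t_*];\ustar)$ reduces to uniform convergence, induced by the honest metric $d_{t_*}$ of~\eqref{eqn:dfmd}. (b) $\K_x(t_*)$ is compact: $\K_x$ is compact in $\EE([0,1];\ustar)$ by~\eqref{eqn:Kcrosslambda} with $K=\{x\}$ and $a=1$, the restriction map $g\mapsto g|_{[0,t_*]}$ is continuous from $\EE([0,1];\ustar)$ to $\EE([0,t_*];\ustar)$ directly from the definition of convergence in these spaces, and $\K_x(t_*)$ is its image. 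Hence $F(\K_x(t_*))$ is compact and, $X$ being Hausdorff, closed.

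Next, to see $F(\K_x(t_*))$ is contained in the limit set, take $h\in\K_x(t_*)$: Theorem~\ref{thm:main}(iii) provides $\epsilon_j=\epsilon_j(\omega,h)\downarrow0$ with $d_{t_*}(y^{\epsilon_j}(\omega),h)\to0$, which (as $\tau_{t_*}(h)=\infty$) is uniform convergence on $[0,t_*]$ and so convergence in $\EE([0,t_*];\ustar)$; continuity of $F$ then gives $F(y^{\epsilon_j}(\omega))\to F(h)$, so $F(h)$ is a limit point. For the reverse inclusion, suppose $p$ is a limit point of $F(\Y(\omega))$ but $p\notin F(\K_x(t_*))$. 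Since $X$ is Hausdorff and $F(\K_x(t_*))$ compact, choose disjoint open sets $W\ni p$ and $V\supseteq F(\K_x(t_*))$; by the definition of a limit point we may pick $\epsilon_j\downarrow0$ with $F(y^{\epsilon_j}(\omega))\in W\subset X\setminus V$. By (a), for all large $j$ the path $y^{\epsilon_j}(\omega)$ lies in $\C([0,t_*];L)$, and by Theorem~\ref{thm:main}(i) this sequence is relatively compact there in the metric $d_{t_*}$ (its $\EE$-closure is compact and, by (a), contained in $\C([0,t_*];L)$); passing to a subsequence, $y^{\epsilon_j}(\omega)\to g$ uniformly on $[0,t_*]$ for some $g\in\C([0,t_*];L)$. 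Theorem~\ref{thm:main}(ii) and the triangle inequality~\eqref{eqn:trin} then yield $d_{t_*}(g,\K_x(t_*))=0$, so $g\in\K_x(t_*)$ by (b); continuity of $F$ forces $F(y^{\epsilon_j}(\omega))\to F(g)\in F(\K_x(t_*))\subset V$, contradicting $F(y^{\epsilon_j}(\omega))\in X\setminus V$, a closed set. This gives the reverse inclusion and completes the identification.

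The step I expect to be the real obstacle --- everything else being genuinely routine topology --- is fact (a): one has to verify that~\eqref{eqn:tdef} and Assumption~\ref{assump:2}(i) really do confine all the $y^\epsilon(\omega)$ for small $\epsilon$, all of $\K_x(t_*)$, and all of their limit points, to a single compact slab $\C([0,t_*];L)$ inside $\ustar$, so that the non-metrizable ambient space $\EE([0,t_*];\ustar)$ and the possibility of finite-time explosion drop out of the picture entirely and one may work with the metric $d_{t_*}$ and ordinary sequential compactness. With that reduction in hand, the rest is the elementary fact that continuous images of relatively compact families are relatively compact and that continuous images of limit sets are limit sets.
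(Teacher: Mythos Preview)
Your proof is correct and follows essentially the same line as the paper's. The relative compactness argument is identical word for word: $\overline{\Y(\omega)}$ compact, continuous image compact, compact in Hausdorff is closed, hence $\overline{F(\Y(\omega))}\subset F(\overline{\Y(\omega)})$ is closed in a compact set and therefore compact.

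For the limit set identification, the paper writes only ``the second conclusion follows immediately by continuity,'' whereas you spell out both inclusions and, in addition, carry out the reduction (your fact~(a)) to the metric slab $\C([0,t_*];L)$. This is not a different argument, just a much more careful one. Your concern that fact~(a) is ``the real obstacle'' is somewhat overstated: parts~(ii) and~(iii) of Theorem~\ref{thm:main} are already phrased in terms of $d_{t_*}$, which is finite precisely when no explosion occurs by~$t_*$, so the confinement you want is implicit in those statements; once $d_{t_*}(y^\epsilon,\K_x(t_*))\to0$ with $\K_x(t_*)\subset\C([0,t_*];L)$, the paths $y^\epsilon$ are automatically non-explosive on $[0,t_*]$ for small $\epsilon$, and convergence in $d_{t_*}$ is convergence in $\EE([0,t_*];\ustar)$. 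With that observation, the two inclusions are indeed immediate from continuity, Hausdorffness, and relative compactness, which is all the paper is claiming.
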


\begin{proof}[Proof of Corollary~\ref{cor:1}]
The second conclusion follows immediately by continuity.  Since $\Y(\omega)$ is relatively compact almost surely, then $\overline{\Y(\omega)}$ is compact, almost surely.  
For $\PP$-almost every $\omega$, $F(\overline{\Y(\omega)})$ is compact since $F$ is continuous, and therefore closed in the Hausdorff topological space $X$. Hence, 
$F(\Y(\omega)) \subset F(\overline{\Y(\omega)})$ implies $\overline{F(\Y(\omega))} \subset F(\overline{\Y(\omega)})$.  Since a closed subset of a compact set is compact,  
the assertion follows. 
\end{proof}

In practice, it is relatively straightforward to check Assumption~\ref{assump:1}.  However, Assumption~\ref{assump:2}, especially part (i), requires more work to validate.  We next explore verifiable conditions under which Assumption~\ref{assump:2} holds.       

\subsection{Sufficient conditions for Assumption~\ref{assump:2}}

Heuristically, if Assumption~\ref{assump:1} is satisfied and the system~\eqref{eqn:sdeas} arises from the original equation~\eqref{eqn:sdemain} under a \emph{reasonable} change of coordinates, then Assumption~\ref{assump:2}  also holds.  However, even more is true if the noise is additive.  That is, if $\sigma_\epsilon(x)\equiv \sigma_\epsilon$ is a constant matrix for every $\epsilon \in (0, \epsilon_*]$ and Assumption~\ref{assump:1} holds, then with an additional marginal continuity hypothesis,  Assumption~\ref{assump:2} holds for ~\eqref{eqn:sdeas} independent of any relationship to the original equation~\eqref{eqn:sdemain}.

To introduce an allowable change of coordinates that maps~\eqref{eqn:sdemain} to~\eqref{eqn:sdeas}, we need further notation.  We denote by $\alpha= (\alpha_1, \ldots, \alpha_d)$ and $\beta = (\beta_1, \ldots, \beta_d)$ 
multiindices taking values in $\RR^d$, and we write $\alpha \succeq \beta $ (respectively $\alpha \succ \beta$) if $\alpha_i \geq \beta_i$ (respectively $\alpha_i > \beta_i$) for all $i$.  
Note that this is equivalent to the partial ordering on the positive cone.  When the context is clear, we use $0$ and $1$ to denote the multiindices $(0,0,\ldots, 0)$ and $(1,1, \ldots, 1)$, respectively.  For multiindices $\alpha, \beta$, we let the product $\alpha \beta$ denote the multiindex $(\alpha_1 \beta_1, \ldots, \alpha_d \beta_d)$ and if $\alpha \succ 0$,  we define the multiindex $\alpha^{-1}=(\alpha_1^{-1}, \alpha_2^{-1}, \ldots, \alpha_d^{-1})$.  Finally, for any multi-index $\alpha$, we 
define 
\begin{equation}
|\alpha| = \sqrt{\sum_i \alpha_i^2}\,. 
\end{equation}

\begin{Definition}
\label{def:1}
Suppose that, for every multiindex $\alpha \succ 0$, $\Phi_\alpha: U \rightarrow U_\alpha$ is a $C^2$-bijection.  We call $\{ \Phi_\alpha \}_{\alpha \succ 0}$ \emph{a family of weak contractions centered at 
$x\in U$} if the following conditions are met: 
\begin{itemize}
\item[(i)] For every multiindex $\alpha \succ 0$, $\Phi_\alpha (x)= x$;
\item[(ii)] For all multiindices $\alpha \succeq \beta\succ 0$ we have
\begin{align*}
|\Phi_{\alpha}(y) - \Phi_{\alpha}(z) | \leq |\Phi_{\beta}(y) - \Phi_{\beta}(z) |
\end{align*}
for all $y,z \in U$.  
\item[(iii)]  There exist $\kappa > 0$ and an open set $\ustar \subset \RR^d$ such that $x \in \ustar$, 
$\ustar \subset U_\alpha$ for each $|\alpha| < \kappa$, and for any compact set $K\subset \ustar$ and any $\epsilon >0$ there exists $\delta >0$ such that  $|\alpha \beta^{-1} -1 |<\delta$ implies 
\begin{align*}
| \Phi_\alpha \circ \Phi_\beta^{-1} (y) - y | < \epsilon
\end{align*}
for all $y\in K$.  
\end{itemize}
 \end{Definition}

\begin{Example}
\label{ex:1}
Let $U$ be an open neighborhood of $0\in \RR^d$, and for any multiindex  $\alpha \succ 0$, let $\Phi_\alpha^1: U\rightarrow \RR^d$ be given by 
\begin{align}
\Phi_\alpha^1(y):= (y_1 \alpha_1^{-1}, y_2 \alpha_2^{-1}, \ldots, y_d \alpha_d^{-1}).
\end{align}
Define the open set $U_\alpha= \Phi_\alpha^1(U)$.  Then, $\{ \Phi_\alpha^1 \}_{\alpha \succ 0 }$ defines a family of weak contractions centered at $0\in U$.  
Also, by \emph{shifting} everything above, if $U$ now denotes an open neighborhood of $x\in \RR^d$, the family $\{ \Phi_\alpha\}_{\alpha \succ 0}$ defined by 
\begin{align*}
\Phi_\alpha(y) = x- \Phi_\alpha^1(x)+ \Phi_{\alpha}^1(y) = x + \Phi_\alpha^1(y - x), \,\,\,\, y\in U,   
\end{align*}
is a family of weak contractions centered at $x$ by setting $U_\alpha=\Phi_\alpha(U)$. 
\end{Example}

\begin{Remark}
Compared with Baldi~\cite{Bal_86} and Caramellino~\cite{Car_98}, the index $\alpha$ in Definition \ref{def:1} 
is allowed to be a multiindex rather than a positive real parameter. 
 With some minor additional structure (see Definition~\ref{def:2} below), this affords more general transformations of~\eqref{eqn:sdemain} rather than functions of $\sqrt{\epsilon \log \log \epsilon^{-1}}$ alone (see Section~\ref{sec:examples}).  It is expected that a similar condition can be used to deduce LILs for diffusions at time infinity as well.        
\end{Remark}

In order to specify a change of coordinates from $x_t$ to $y^\epsilon_t$, we need to impose assumptions on the dependence of the multiindex $\alpha$ 
on $\epsilon$.  Below, this dependence is determined using a heuristic scaling argument which in turn dictates the asymptotic behavior
of $x_t$ at time $t = 0$. 
The conditions outlined in the next definition are natural and satisfied in the examples in which we are interested. 

\begin{Definition}
\label{def:2}
Fix $\epsilon_0 > 0$. 
We call $\psi:[0, \epsilon_0] \rightarrow [0, \infty)^d$ an \emph{asymptotic index} if
all of the following conditions are met:
 \begin{itemize}
\item[(i)]  $\psi$ is continuous.
\item[(ii)]   $\psi(0)=0$ and $\psi(u)\prec \psi(v)$ as a multiindex for any $0 < u < v\leq \epsilon_0$. 
\item[(iii)] For any $\epsilon >0$, there exists $\delta \in (0,1)$ such that for all $c\in (1-\delta, 1)$ there exists $J\in \N$ such that for any $j\geq J$:
\begin{align*}
\delta_1, \delta_2 \in [ c^{j+1}, c^j]\,\, \text{ implies }\,\, | \psi(\delta_1) \psi(\delta_2)^{-1} -1| < \epsilon.  
\end{align*}
\end{itemize}
\end{Definition}

\begin{Example}
\label{ex:2}
For any positive integers $\ell, k\in \N$, and $\epsilon_*(\ell, k) >0$ small enough, let $\psi_{\ell, k}:[0, \epsilon_*(\ell,k)]\rightarrow [0, \infty)$ be given by 
\begin{align}
\label{eqn:functionex}
\psi_{\ell, k}(\epsilon)= \begin{cases}
\sqrt{\epsilon^\ell (\log \log \epsilon^{-1})^k}& \text{ for } \epsilon \in(0, \epsilon_*(\ell, k)] \,,\\
0 & \text{ for } \epsilon =0.
\end{cases}
\end{align}
  Then, for any $(\ell_1, k_1), \ldots, (\ell_d, k_d)\in \N \times \N$ and $\epsilon_*:=\min\{\epsilon_*(\ell_i, k_i) \, : \, i =1,\ldots, d\}>0$, $\psi:[0, \epsilon_*]\rightarrow [0, \infty)^d$ defined as 
\begin{align*}
\psi(\epsilon)= (\psi_{\ell_1, k_1}(\epsilon), \ldots, \psi_{\ell_d, k_d}(\epsilon) )
\end{align*}    
is an asymptotic index.  Indeed, by standard calculations for $\epsilon \in (0, \epsilon_*]$ and $\epsilon_*>0$ small enough we have 
\begin{equation}
\frac{d}{d\epsilon}(\psi_{\ell,k} (\epsilon))^2 = \epsilon^{\ell - 1}(\log\log \epsilon^{-1})^{k - 1} \left(\ell \log\log \epsilon^{-1} - \frac{k}{\log \epsilon^{-1}} \right) > 0 \,,
\end{equation}
which implies (ii). Also,  by just proved monotonicity, if $c^{j + 1} \leq \delta_2 \leq \delta_1 \leq c^j$
\begin{equation}
1 \leq 
\frac{\psi_{\ell, k}(\delta_1)}{\psi_{\ell, k}(\delta_2)} = \left(\frac{\delta_1}{\delta_2}\right)^{\frac{\ell}{2}} \left(\frac{\log\log \delta_1^{-1}}{\log\log \delta_2^{-1}}\right)^{\frac{k}{2}}
\leq \frac{1}{c^{\frac{\ell}{2}}} \left(\frac{\log (j\log c^{-1})}{\log ((j + 1)\log c^{-1})}\right)^{\frac{k}{2}} \,.
\end{equation}
Then, for any small $\epsilon > 0$ there exists $\delta > 0$ such that 
$c^{\frac{\ell}{2}} > 1 - \frac{\epsilon}{3}$ for any $c \in (1-\delta, 1)$ and by choosing $J$ large, we obtain that for any $j \geq J$ 
\begin{equation}
\left(\frac{\log (j\log c^{-1})}{\log ((j + 1)\log c^{-1})}\right)^{\frac{k}{2}} < 1 + \frac{\epsilon}{3}.
\end{equation}
Thus, if $\epsilon \in (0,1)$
\begin{align}
0 \leq \frac{\psi_{\ell, k}(\delta_1)}{\psi_{\ell, k}(\delta_2)} - 1 \leq \frac{1 + \frac{\epsilon}{3}}{1 - \frac{\epsilon}{3}} - 1 < \epsilon.
\end{align}
Hence property (iii) follows. The case $c^{j + 1} \leq \delta_1 \leq \delta_2 \leq c^j$ follows analogously. 
\end{Example}

\begin{Example}
\label{ex:3}
In Example~\ref{ex:2}, instead of choosing each coordinate of the form~\eqref{eqn:functionex}, one could replace $\psi_{\ell, k}$ by a continuous, strictly increasing function $\varphi:[0, \epsilon_*]\rightarrow [0, \infty)$ with $\varphi(0)=0$ and $\varphi$ regularly varying at $0$.     
\end{Example}

Using the previous two concepts, we now connect equations~\eqref{eqn:sdemain} and~\eqref{eqn:sdeas}.  Suppose that $\{ \Phi_\alpha\}_{\alpha \succ0}$ is a family of weak contractions centered at 
$x\in U$ and $\psi:[0, \epsilon_*]\rightarrow \RR^d$ is an asymptotic index.  Observe that for any $\epsilon \in (0,\epsilon_*]$ and $t< \epsilon^{-1} \tau(x_\cdot)$ (see \eqref{eqn:bup}), the family of processes 
\begin{align}
\label{eqn:coc}
y^\epsilon_t:= \Phi_{\psi(\epsilon)}(x_{\epsilon t}) 
\end{align}
satisfies, by It\^ o's formula, an SDE on $U_\epsilon$ of the form~\eqref{eqn:sdeas} with $b_\epsilon:U_\epsilon\rightarrow \RR^d, \sigma_\epsilon:U_\epsilon\rightarrow M_{d\times k}$ given by 
\begin{align}
\label{eqn:beps1}
b_\epsilon(y) &= \epsilon \tilde{L} \Phi_{\psi(\epsilon)}( \Phi^{-1}_{\psi(\epsilon)}(y)), \\
\label{eqn:sigeps1}  \sigma_\epsilon(y)& = \sqrt{\epsilon r(\epsilon)} D \Phi_{\psi(\epsilon)}(\Phi_{\psi(\epsilon)}^{-1}(y))  \tilde{\sigma}(\Phi^{-1}_{\psi(\epsilon)}(y)))\,,
\end{align}
where 
 \begin{align}
 \label{eqn:gen}
 \tilde{L} = \sum_{i=1}^d \tilde{b}_i(x) \frac{\partial}{\partial x_i} + \frac{1}{2}\sum_{i,j=1}^d (\tilde{\sigma}(x) \tilde{\sigma}(x)^T)_{ij} \frac{\partial^2}{\partial x_i \partial x_j}  
 \end{align}
 with $\tilde{b}$ and $\tilde{\sigma}$ as in~\eqref{eqn:sdemain}. 
 \begin{Lemma}
 \label{lem:1}
 Suppose that $\{ \Phi_\alpha\}_{\alpha \succ0}$ is a family of weak contractions centered at $x\in U$ and $\psi:[0, \epsilon_*]\rightarrow \RR^d$ is an asymptotic index
and suppose $|\psi(\epsilon_*)| < \kappa$, where $\kappa$ is as in Definition \ref{def:1}(iii) for the appropriate $\ustar$.  
If $b_\epsilon, \sigma_\epsilon$ given by~\eqref{eqn:beps1}-\eqref{eqn:sigeps1} satisfy Assumption~\ref{assump:1} with already fixed $\epsilon_* > 0$ and $\ustar$, 
 then the family of processes $\{y^\epsilon\}_{\epsilon \in (0, \epsilon_*]}$ given by~\eqref{eqn:coc} satisfies Assumption~\ref{assump:2}.  
  \end{Lemma}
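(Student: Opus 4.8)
The plan is to verify both parts of Assumption~\ref{assump:2} for the family $\{y^\epsilon\}_{\epsilon\in(0,\epsilon_*]}$ defined by $y^\epsilon_t=\Phi_{\psi(\epsilon)}(x_{\epsilon t})$, using the structural properties of weak contractions (Definition~\ref{def:1}) and of the asymptotic index (Definition~\ref{def:2}), together with the continuity of the underlying path $t\mapsto x_t$. First I would fix the compact set $L\subset\ustar$ and the time $t_*>0$ coming from~\eqref{eqn:tdef}, choose a slightly larger compact set $L'$ with $L\subset\inter(L')\subset L'\subset\ustar\subset U_\alpha$ for all $|\alpha|<\kappa$, and work on the $\PP$-almost sure event on which $s\mapsto x_s$ is continuous on $[0,\delta_0]$ for some deterministic $\delta_0>0$ and stays in a compact subset of $U$ there (this uses that $x_0=x$ and local existence, shrinking $\epsilon_*$ if necessary so that $\epsilon_* t_*<\delta_0$).

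For part (ii), the continuity of $\epsilon\mapsto y^\epsilon$ into $\EE_x([0,t_*];\ustar)$, I would write, for $\epsilon'$ near $\epsilon$,
\begin{align*}
y^{\epsilon'}_t-y^\epsilon_t=\bigl(\Phi_{\psi(\epsilon')}-\Phi_{\psi(\epsilon)}\bigr)(x_{\epsilon' t})+\Phi_{\psi(\epsilon)}(x_{\epsilon' t})-\Phi_{\psi(\epsilon)}(x_{\epsilon t}),
\end{align*}
and control the second term by uniform continuity of $x_\cdot$ on $[0,\delta_0]$ together with continuity of the fixed $C^2$ map $\Phi_{\psi(\epsilon)}$ on the compact set traced out by $x_{[0,\delta_0]}$; the first term is handled by rewriting $\Phi_{\psi(\epsilon')}-\Phi_{\psi(\epsilon)}=\Phi_{\psi(\epsilon')}\circ\Phi_{\psi(\epsilon)}^{-1}\circ\Phi_{\psi(\epsilon)}-\Phi_{\psi(\epsilon)}$ and invoking Definition~\ref{def:1}(iii): as $\epsilon'\to\epsilon$, continuity of $\psi$ gives $|\psi(\epsilon')\psi(\epsilon)^{-1}-1|\to0$, hence $|\Phi_{\psi(\epsilon')}\circ\Phi_{\psi(\epsilon)}^{-1}(y)-y|\to0$ uniformly on the relevant compact set. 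One must also check that $y^{\epsilon'}$ does not explode before $t_*$, which follows once we know all the relevant points stay in $L'$; I would get this from the monotonicity in Definition~\ref{def:1}(ii) (so $|\Phi_{\psi(\epsilon)}(x_{\epsilon t})-x|\le|\Phi_{\psi(\epsilon_*)}(x_{\epsilon t})-x|$ is uniformly small once $\epsilon_*$ is small, since $x_{\epsilon t}\to x$ and $\Phi_{\psi(\epsilon_*)}$ is fixed and continuous with $\Phi_{\psi(\epsilon_*)}(x)=x$), possibly after one further shrink of $\epsilon_*$ and $t_*$.

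For part (i), fix $\delta>0$. Given $\epsilon\in[c^{j+1},c^j]$ write
\begin{align*}
y^{c^j}_t-y^\epsilon_t=\bigl(\Phi_{\psi(c^j)}\circ\Phi_{\psi(\epsilon)}^{-1}-\mathrm{id}\bigr)(y^\epsilon_t)+\Phi_{\psi(c^j)}(x_{c^j t})-\Phi_{\psi(c^j)}(x_{\epsilon t}).
\end{align*}
The second term is bounded, via continuity of the fixed map $\Phi_{\psi(c^j)}$ and the modulus of continuity of $x_\cdot$ near $0$, by something tending to $0$ as $j\to\infty$ uniformly over $\epsilon\in[c^{j+1},c^j]$, because $|c^j t-\epsilon t|\le(c^j-c^{j+1})t_*\to0$ and both times lie in a shrinking neighborhood of $0$ where $x_\cdot$ is (uniformly) continuous — here I would be a little careful to use uniform continuity of $x$ on $[0,\delta_0]$ rather than a Lipschitz bound. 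The first term is controlled by Definition~\ref{def:2}(iii): choose $c_0<1$ so that for $c\in(c_0,1)$ there is $J_1$ with $|\psi(\delta_1)\psi(\delta_2)^{-1}-1|$ as small as we like for $\delta_1,\delta_2\in[c^{j+1},c^j]$, $j\ge J_1$; then Definition~\ref{def:1}(iii) applied with $K=L'$ converts this into $|\Phi_{\psi(c^j)}\circ\Phi_{\psi(\epsilon)}^{-1}(y)-y|<\delta/2$ for all $y\in L'$, provided we already know $y^\epsilon_t\in L'$. That last containment is bootstrapped exactly as in part (ii): for $\epsilon$ small (i.e.\ $j$ large) and $t\le t_*$, $x_{\epsilon t}$ is close to $x$, so $\Phi_{\psi(\epsilon)}(x_{\epsilon t})$ is close to $\Phi_{\psi(\epsilon)}(x)=x$ using the monotonicity bound against the fixed map $\Phi_{\psi(\epsilon_*)}$, hence $y^\epsilon_t\in L\subset\inter(L')$; simultaneously this shows $y^\epsilon$ does not reach $\Delta$ on $[0,t_*]$. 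Taking $J=\max(J_1,\dots)$ and combining the two bounds gives $d_{t_*}(y^{c^j},y^\epsilon)<\delta$ and $y^\epsilon_{[0,t_*]}\subset L$ for $j\ge J$.

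The main obstacle I anticipate is organizing the containment statement $y^\epsilon_{[0,t_*]}\subset L'$ (equivalently, non-explosion of $y^\epsilon$ before $t_*$), since it is logically needed before one may apply Definition~\ref{def:1}(iii), yet it is itself only available for $\epsilon$ small; the resolution is the bootstrap sketched above, driven entirely by $x_{\epsilon t}\to x$ as $\epsilon\to0$ uniformly for $t\in[0,t_*]$ and the fixedness and continuity of the single map $\Phi_{\psi(\epsilon_*)}$, at the cost of possibly shrinking $\epsilon_*$ and $t_*$. A secondary technical point is that $d_{t_*}$ is only a genuine metric away from $\Delta$, so all estimates must be phrased on the event where the relevant paths avoid $\Delta$ on $[0,t_*]$ — which the containment argument guarantees — making the $d_{t_*}$-bounds reduce to ordinary sup-norm bounds on $[0,t_*]$. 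No quantitative large-deviation input is needed here; the lemma is purely a deterministic-modulus-of-continuity argument carried out pathwise on a full-measure set.
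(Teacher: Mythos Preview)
There is a genuine gap, and it stems from reading the monotonicity in Definition~\ref{def:1}(ii) backwards. For $\epsilon<\epsilon_*$ one has $\psi(\epsilon)\prec\psi(\epsilon_*)$, so (ii) gives
\[
|\Phi_{\psi(\epsilon_*)}(y)-\Phi_{\psi(\epsilon_*)}(z)|\le|\Phi_{\psi(\epsilon)}(y)-\Phi_{\psi(\epsilon)}(z)|,
\]
the \emph{opposite} of what your bootstrap needs. In the canonical case $\Phi_\alpha(y)=\alpha^{-1}(y-x)+x$ the map $\Phi_{\psi(\epsilon)}$ has Lipschitz constant $\sim\psi(\epsilon)^{-1}\to\infty$, so neither ``$x_{\epsilon t}$ is close to $x$'' nor ``continuity of the fixed map $\Phi_{\psi(c^j)}$'' yields control on $|\Phi_{\psi(\epsilon)}(x_{\epsilon t})-x|$ or on $|\Phi_{\psi(c^j)}(x_{c^j t})-\Phi_{\psi(c^j)}(x_{\epsilon t})|$. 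Indeed, the very point of the rescaling is that $y^\epsilon$ stays of order one (think $y^\epsilon_t=B_{\epsilon t}/\sqrt{\epsilon\log\log\epsilon^{-1}}$), so your containment claim $y^\epsilon_t\to x$ is false in general, and with it the step that puts $y^\epsilon_t\in L'$ before invoking Definition~\ref{def:1}(iii).

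Contrary to your closing remark, the paper's proof does use large-deviation input, and this is exactly where. Proposition~\ref{prop:1}(i), which relies only on Assumption~\ref{assump:1} through Theorem~\ref{thm:LD2}, gives that $y^{c^j}\to\K_x(t_*)$ almost surely; this supplies both the containment $y^{c^j}([0,t_*])\subset L'$ and, via Arzel\`a--Ascoli equicontinuity of the compact set $\K_x(t_*)$, a uniform time-modulus for $y^{c^j}$. The term $|\Phi_{\psi(\epsilon)}(x_{\epsilon t})-\Phi_{\psi(\epsilon)}(x_{c^j t})|$ is then handled by first using monotonicity in the correct direction (replace $\psi(\epsilon)$ by the \emph{smaller} $\psi(c^{j+1})$), rewriting $\Phi_{\psi(c^{j+1})}(x_{c^j s})=\Phi_{\psi(c^{j+1})}\circ\Phi_{\psi(c^j)}^{-1}(y^{c^j}_s)$, and reducing everything to $\sup_{s\in[ct,t]}|y^{c^j}_t-y^{c^j}_s|$, which the equicontinuity controls. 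Your part~(ii) argument is closer to workable (there $\epsilon$ stays near a fixed $\epsilon_0>0$, so one can legitimately dominate by the fixed map $\Phi_{\psi(\epsilon_0/2)}$), but the containment step there still needs to be rewritten without the reversed monotonicity.
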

Lemma~\ref{lem:1} is proved in Section~\ref{sec:lem} along with the following corollary.

\begin{Corollary}
\label{cor:2}
Consider the family of processes $\{y^\epsilon\}_{\epsilon \in (0, \epsilon_*]}$ defined by relation~\eqref{eqn:sdeas} and suppose that $\sigma_\epsilon(x)\equiv \sigma_\epsilon$ is a family of constant $d\times k$ matrices and the Brownian motion $B_t^\epsilon$ in~\eqref{eqn:sdeas} is given by $B^\epsilon_t:=\tfrac{1}{\sqrt{\epsilon}}B_{\epsilon t}$.  
If $\epsilon_*$, $\ustar$, $b_\epsilon, \sigma_\epsilon$ satisfy Assumption~\ref{assump:1} and for every $\delta \in (0, \epsilon_*]$ we have that $\sigma_\epsilon \rightarrow \sigma_\delta$ and $b_\epsilon \rightarrow b_\delta$ as $\epsilon \rightarrow \delta$ uniformly on compact subsets of 
$\ustar$, then  $\{y^\epsilon\}_{\epsilon \in (0, \epsilon_*]}$ satisfies Assumption~\ref{assump:2}.  
\end{Corollary}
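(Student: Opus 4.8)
The plan rests on the fact that, because $\sigma_\epsilon(x)\equiv\sigma_\epsilon$ is constant and $B^\epsilon_t=\epsilon^{-1/2}B_{\epsilon t}$, the noise enters~\eqref{eqn:sdeas} purely additively. Setting
\begin{align*}
n^\epsilon_t:=\frac{1}{\sqrt{\epsilon\,r(\epsilon)}}\,\sigma_\epsilon B_{\epsilon t},\qquad t\in[0,t_*],
\end{align*}
the shifted process $z^\epsilon:=y^\epsilon-n^\epsilon$ solves the random ODE $\dot z^\epsilon_t=b_\epsilon(z^\epsilon_t+n^\epsilon_t)$, $z^\epsilon_0=x$, whose right-hand side is locally Lipschitz in the state and depends on $\epsilon$ only through the drift $b_\epsilon$ and the continuous path $n^\epsilon$. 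Thus $y^\epsilon$ is a \emph{pathwise} functional: it is the solution map of~\eqref{eqn:det} extended to merely continuous forcings $w$ (read~\eqref{eqn:det} as $\dot g=b(g)+\dot w$; legitimate since $\sigma(\cdot)\equiv\sigma$ is constant, and the extension is standard via $g-w$), which I still denote $S_x$. Granting this, both parts of Assumption~\ref{assump:2} will follow from (a) continuous dependence of $S_x(w)$ on the pair $(b,w)$, uniformly up to the explosion time, which is a routine Gr\"onwall estimate, and (b) two facts about the rescaled Brownian path $n^\epsilon$.

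\emph{Confinement in Assumption~\ref{assump:2}(i).} First, unwinding~\eqref{def:lambda} and using $\sigma(\cdot)\equiv\sigma$ gives $\K_x=\{S_x(w):w\in K_0\}$, where $K_0:=\{\sqrt2\,\sigma f:\ f\in\C_1,\ f_0=0\}$ is a \emph{compact} subset of $\C([0,1];\RR^d)$; by the choice of $t_*$ in~\eqref{eqn:tdef}, $S_x(w)([0,t_*])\subset\inter(L)$ for every $w\in K_0$. Next, by Strassen's functional LIL at time zero for $B$ (recalled in the Introduction), $\PP$-a.s.\ the family $\{(2\epsilon r(\epsilon))^{-1/2}B_{\epsilon\cdot}\}$ is relatively compact in $\C([0,1];\RR^k)$ with limit set $\{f\in\C_1:f_0=0\}$; combined with $\sigma_\epsilon\to\sigma$ from Assumption~\ref{assump:1}(ii) and the elementary LIL bound $\sup_{s\le\epsilon}|B_s|=O(\sqrt{\epsilon r(\epsilon)})$, this gives, $\PP$-a.s., that $\{n^\epsilon\}$ is relatively compact and $d_{t_*}(n^\epsilon,K_0)\to0$ as $\epsilon\to0$. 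Since $K_0$ is compact and each $S_x(w)$, $w\in K_0$, stays in $\inter(L)$ on $[0,t_*]$ without blow-up, a covering-plus-continuation argument --- using also $b_\epsilon\to b$ uniformly on $L$ from Assumption~\ref{assump:1}(ii), which provides a uniform Lipschitz bound for $b_\epsilon$ on $L$ --- produces a \emph{deterministic} $\rho_0>0$ and $\epsilon_0'>0$ such that $d_{t_*}(w,K_0)\le\rho_0$ and $\epsilon\le\epsilon_0'$ force the solution of $\dot g=b_\epsilon(g)+\dot w$ to remain in $L$ on $[0,t_*]$. Hence $y^\epsilon([0,t_*])\subset L$ once $\epsilon$ is below a $\PP$-a.s.\ finite random threshold, which is absorbed into $J(\omega,c)$.

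\emph{Assumption~\ref{assump:2}(ii), and the closeness clause of (i).} For (ii), fix $\epsilon_0\in(0,\epsilon_*)$; continuity of $\epsilon\mapsto\sigma_\epsilon$ and of $\epsilon\mapsto\epsilon r(\epsilon)$ on $(0,\epsilon_*)$, together with path-continuity of $B$, give $n^\epsilon\to n^{\epsilon_0}$ in $\C([0,t_*];\RR^d)$, while $b_\epsilon\to b_{\epsilon_0}$ uniformly on compacts by the extra hypothesis; continuous dependence of $S_x$ on $(b,w)$ up to explosion then gives $y^\epsilon\to y^{\epsilon_0}$ in $\EE_x([0,t_*];\ustar)$ --- the convergence being uniform on every time interval strictly before the explosion time of $y^{\epsilon_0}$, which is exactly $\EE$-convergence. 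For the closeness clause of~\ref{assump:2}(i), on the full-measure event (for $j$ large) that $y^{c^j},y^\epsilon$ stay in $L$ on $[0,t_*]$, Gr\"onwall (with $\Lambda$ a Lipschitz constant for $b_{c^j}$ on $L$, uniform in $j$) yields
\begin{align*}
d_{t_*}(y^{c^j},y^\epsilon)&\le d_{t_*}(n^{c^j},n^\epsilon)+d_{t_*}(z^{c^j},z^\epsilon)\\
&\le(1+\Lambda t_*e^{\Lambda t_*})\,d_{t_*}(n^{c^j},n^\epsilon)+t_*e^{\Lambda t_*}\sup_{L}|b_\epsilon-b_{c^j}|.
\end{align*}
Here $\sup_L|b_\epsilon-b_{c^j}|\to0$ as $j\to\infty$ for fixed $c$, because $\epsilon\mapsto b_\epsilon|_L$ is continuous on the compact $[0,\epsilon_*]$ --- hence uniformly continuous --- (combining the extra hypothesis with Assumption~\ref{assump:1}(ii)) while $|\epsilon-c^j|\le c^j(1-c)\to0$. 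So everything is reduced to estimating $d_{t_*}(n^{c^j},n^\epsilon)$, uniformly over $\epsilon\in[c^{j+1},c^j]$.

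\emph{The main obstacle.} Writing $n^\epsilon_t-n^{c^j}_t$ and separating the difference of deterministic prefactors $\sigma_\epsilon(\epsilon r(\epsilon))^{-1/2}-\sigma_{c^j}(c^j r(c^j))^{-1/2}$ from the Brownian increment $B_{\epsilon t}-B_{c^j t}$, the prefactor piece is harmless: $\sigma_\epsilon\to\sigma_{c^j}$ (uniform continuity of $\epsilon\mapsto\sigma_\epsilon$ on $[0,\epsilon_*]$, block shrinking), the ratio $\sqrt{c^j r(c^j)/(\epsilon r(\epsilon))}$ lies in $[1-o_j(1),\,c^{-1/2}]$ --- within $o(1)$ of $1$ for $c$ near $1$ and $j$ large --- and $|B_{\epsilon t}|\le\sup_{s\le c^j}|B_s|=O(\sqrt{c^j r(c^j)})$. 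The delicate term is the increment $|B_{\epsilon t}-B_{c^j t}|$ over a time window of length at most $c^j(1-c)$ inside $[0,c^j]$: one cannot simply invoke equicontinuity of the Strassen-precompact family $\{(c^j r(c^j))^{-1/2}B_{c^j\cdot}\}_j$, because that modulus is $\omega$-dependent, whereas~\ref{assump:2}(i) demands $c_0$ independent of $\omega$. Instead I would rescale $\hat B^j:=c^{-j/2}B_{c^j\cdot}$ to a standard Brownian motion and use a Gaussian tail bound on its modulus of continuity at scale $1-c$ that is \emph{uniform in $j$}, together with Borel--Cantelli along $j$: since $r(c^j)\sim\log j$, the series $\sum_j\PP\big(\sup_{|u-u'|\le1-c}|\hat B^j_u-\hat B^j_{u'}|>\gamma\sqrt{r(c^j)}\big)$ converges once $\gamma\gtrsim\sqrt{1-c}$, giving $\sup_{t\le t_*,\ \epsilon\in[c^{j+1},c^j]}|B_{\epsilon t}-B_{c^j t}|\le\gamma(c)\sqrt{c^j r(c^j)}$ for all $j\ge J(\omega,c)$ with a \emph{deterministic} $\gamma(c)\to0$ as $c\to1$. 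Assembling the pieces, $d_{t_*}(n^{c^j},n^\epsilon)$ is bounded, for $j$ large, by a deterministic function of $c$ vanishing as $c\to1$; one then picks $c_0=c_0(\delta)$ to make the right-hand side of the displayed bound $<\delta$ and folds the confinement, LIL and Borel--Cantelli thresholds into a single $J(\omega,c)$. The main obstacle, to repeat, is precisely obtaining this \emph{deterministic} modulus for the Brownian increments at scale $c^j(1-c)$ relative to $\sqrt{c^j r(c^j)}$: Strassen's relative compactness gives the qualitative statement but not the $\omega$-free bound that Assumption~\ref{assump:2}(i) requires.
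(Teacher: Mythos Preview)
Your overall strategy is sound and arrives at the result, but it differs from the paper's proof in one essential respect, and there is one minor but genuine slip.

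\textbf{Comparison with the paper.} The Gr\"onwall reduction of $d_{t_*}(y^{c^j},y^\epsilon)$ to a drift error plus the noise term $d_{t_*}(n^{c^j},n^\epsilon)$ is exactly what the paper does (its $S_1$ and $S_2$). The divergence is in how the noise term is controlled. The paper observes that the rescaled Brownian motion $\Phi_{\psi(\epsilon)}(B_{\epsilon\cdot})=B_{\epsilon\cdot}/\sqrt{\epsilon r(\epsilon)}$ is \emph{itself} an instance of the weak--contraction/asymptotic--index framework (take $x_t=B_t$, $\tilde b=0$, $\tilde\sigma=I$, $\Phi_\alpha(y)=\alpha^{-1}y$, $\psi(\epsilon)=\sqrt{\epsilon r(\epsilon)}\cdot 1$). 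Lemma~\ref{lem:1} therefore applies directly to $B$ and yields $d_{t_*}(\Phi_{\psi(\epsilon)}(B_{\epsilon\cdot}),\Phi_{\psi(c^j)}(B_{c^j\cdot}))<\delta$ for $j\ge J(\omega,c)$ and $c$ near~$1$; the remaining piece $(\sigma_\epsilon-\sigma_{c^j})\Phi_{\psi(c^j)}(B_{c^j\cdot})$ is dispatched by the ordinary LIL bound $\sup_{t\le t_*}|B_{c^j t}|/\sqrt{c^j r(c^j)}\le 2$. Thus the paper recycles Lemma~\ref{lem:1} (whose proof, in turn, rests on Proposition~\ref{prop:1}(i) and equicontinuity of $\K_0$) rather than redo the estimate. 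Your route --- a direct Gaussian modulus--of--continuity tail bound plus Borel--Cantelli along $j$ to obtain a deterministic $\gamma(c)\asymp\sqrt{1-c}\to0$ --- is perfectly valid and in some sense more elementary, but it reproves from scratch what Lemma~\ref{lem:1} already packages. Similarly, for confinement you argue via Strassen's LIL for $n^\epsilon$ and continuous dependence, whereas the paper first gets $y^{c^j}\subset L$ from Proposition~\ref{prop:1}(i) and then bootstraps to $y^\epsilon$ via the closeness estimate and a stopping time.

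\textbf{A fixable slip.} You twice assert that $b_\epsilon\to b$ uniformly on $L$ ``provides a uniform Lipschitz bound for $b_\epsilon$ on $L$''. Uniform convergence does not control Lipschitz constants (e.g.\ $b_\epsilon(y)=b(y)+\epsilon\sin(y/\epsilon)$). In both places the cure is the one the paper uses: split through the \emph{limit} $b$, writing $|b_\epsilon(y^\epsilon)-b_{c^j}(y^{c^j})|\le|b_\epsilon-b|(y^\epsilon)+|b(y^\epsilon)-b(y^{c^j})|+|b-b_{c^j}|(y^{c^j})$, so that only $\text{Lip}_L(b)$ enters the Gr\"onwall exponent. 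With this correction your argument goes through.
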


\begin{Remark}
Perhaps the most surprising consequence of Lemma~\ref{lem:1} is that it is used to prove Corollary~\ref{cor:2}, even though there is no reference to an underlying mapping from $x_t$ to $y_t^\epsilon$ . 
\end{Remark}

\section{Law of the Iterated Logarithm Examples in the weakly hypoelliptic setting}\label{sec:examples}

\subsection{Weakly hypoelliptic diffusions}
Since Theorem~\ref{thm:main} is a generalization of the main result in~\cite{Car_98}, all applications discussed there also follow from Theorem~\ref{thm:main}.  We therefore refer the reader to~\cite{Car_98} to see how to obtain a functional LIL at time zero for $d$-dimensional Brownian motion, elliptic SDEs as well as some iterated stochastic integrals.  Here, we provide examples not covered by~\cite{Car_98} which follow from Theorem~\ref{thm:main}.

All of the SDEs discussed below fall within the class of \emph{weakly hypoelliptic} diffusions with additive noise; that is, each SDE below is of the form~\eqref{eqn:sdemain},
 where $\tilde{b} \in C^\infty(U)$ and $\tilde{\sigma}(x) \equiv \tilde{\sigma}$ is a $d\times k$ constant matrix  such that the range of $\tilde{\sigma}$, denoted by $\mathcal{R}(\tilde{\sigma})$, has dimension strictly less than $d$, but \emph{H\"{o}rmander's condition} is satisfied.  That is,   
we say that the the columns  $\tilde{\sigma}^1, \tilde{\sigma}^2, \ldots, \tilde{\sigma}^k$ of $\tilde{\sigma}$ and  $\tilde{\sigma}^0(x):=\tilde{b}(x)$, viewed as vector fields on $U$, satisfy \emph{H\" ormander's condition} on $U$
if the list
\begin{align}
\tag{H}\label{eqn:WH}
\tilde{\sigma}^{\ell_1}(x) &\qquad \ell_1 =1,2,\ldots, k \\
\nonumber [\tilde{\sigma}^{\ell_1}, \tilde{\sigma}^{\ell_2}](x)&\qquad \ell_1, \ell_2 =0,1,\ldots , k \\
\nonumber [\tilde{\sigma}^{\ell_1}, [\tilde{\sigma}^{\ell_2}, \tilde{\sigma}^{\ell_3}]](x) & \qquad \ell_1, \ell_2, \ell_3= 0,1,\ldots, k\\
\nonumber\vdots & \qquad \vdots 
\end{align}  
spans the tangent space at all points $x\in U$.  In the above, $[X,Y]$ denotes the \emph{commutator} of the vector fields $X$ and $Y$; that is, if $X=(X_j(x))$ and $Y=(Y_j(x))$, then 
\begin{align*}
[X,Y](x) := \sum_{j=1}^d \sum_{i=1}^d \bigg\{ X_i(x) \frac{\partial Y_j(x)}{\partial x_i} - Y_i(x) \frac{\partial X_j(x)}{\partial x_i} \bigg\} \frac{\partial}{\partial x_j}.
\end{align*} 

A celebrated theorem of H\"{o}rmander~\cite{Hor_67} shows that if condition~\eqref{eqn:WH} is satisfied, then the operators $\tilde{L}$, $\tilde{L}^*$, $\partial_t \pm \tilde{L}$, $\partial_t \pm \tilde{L}^*$ are all hypoelliptic on the respective domains $U$, $U$, $(0, \infty) \times U$, $(0, \infty) \times U$, where 
$\tilde{L}$ is as in~\eqref{eqn:gen} and $\tilde{L}^*$ denotes the formal $L^2(dx)$-adjoint of $\tilde{L}$.  
As a consequence, the distribution of the solution process $x_t$ restricted to Borel subsets of $U$ is absolutely continuous with respect to Lebesgue measure with density $q_t(x,y)$.  Futhermore, $(t,x,y) \mapsto q_t(x,y)\in C^\infty ((0, \infty) \times U \times U)$.  If the process $x_t$ exits $U$ in finite time, then the law of $x_t$ has 
a singular component on $\partial U$.  However, this component is not present prior to exiting.

One interpretation of H\"{o}rmander's theorem is that condition~\eqref{eqn:WH} ensures that $x_t$ is not locally restricted to a lower-dimensional submanifold of $U$.  Indeed, 
the noise is either acting explicitly in directions $\tilde{\sigma}^1, \ldots, \tilde{\sigma}^k$, or it propagates implicitly through the drift term $\tilde{\sigma}^0=\tilde{b}$, as represented by the  
commutators in~\eqref{eqn:WH}.  However, condition~\eqref{eqn:WH} does not guarantee that the process reaches all points in a small neighborhood in short times. 
For example, the process may be restricted to a cone (still satisfying H\"{o}rmander condition) as opposed to  a ball~\cite[Example~3.4]{HerMat_15ii}.  
The goal of our LILs deduced below is to provide further insight into the a.s., small time behavior.

\subsection{Examples}  We now consider several concrete examples, starting with the so-called \emph{Iterated Kolmogorov} diffusion.  

\begin{Example}[Iterated Kolmogorov]
\label{ex:IK}
Consider the following SDE on $\RR^d$
\begin{equation}\label{eqn:itkol}
\begin{aligned}
dx_1 &= x_2 \, dt \,,\\
 dx_2&=x_3 \, dt \,,\\
 \vdots & \qquad \vdots \\
 dx_{d-1}&= x_d \, dt \,,\\
 dx_d&= dW_t \,,
\end{aligned}
\end{equation}
where $W_t$ is a standard one-dimensional Brownian motion defined on $(\Omega, \mathcal{F},  \PP)$ and the process $$x_t:=(x_1(t), \ldots, x_d(t))$$ has initial condition $x_0=(0,0,\ldots, 0)$.  Our goal is to establish an LIL for the first coordinate $x_1$, which is given by the iterated time integral of the Brownian motion
\begin{align*}
x_1(t) = \int_0^t \int_0^{t_2} \dots \int_0^{t_{d}} dW_s \,  dt_{d} \, dt_{d-1} \ldots d t_2 .  
\end{align*}  
This was the one of the main goals of the paper~\cite{Lac_97} by Lachal.  Historically, the case  $d=2$ in~\eqref{eqn:itkol} is the first known example of a \emph{hypoelliptic} diffusion, as discovered by Kolmogorov.  For further information, see the discussion in the introduction of~\cite{Hor_67}.    

For any multiindex $\alpha =(\alpha_1, \alpha_2, \ldots, \alpha_d) \succ 0$,  define $\Phi_\alpha: \RR^d \rightarrow \RR^d$ by
\begin{align*}
\Phi_\alpha(y) = (\alpha_1^{-1} y_1, \alpha_2^{-1} y_2, \ldots, \alpha_d^{-1} y_d) 
\end{align*}   
and note by Example~\ref{ex:1}, $\{ \Phi_\alpha \}_{\alpha \succ 0}$ is a family of weak contractions centered at the origin in $ \RR^d$.  Furthermore, by Example~\ref{ex:2},  for $\epsilon_*>0$ small enough, $\psi:[0, \epsilon_*]\rightarrow [0, \infty)^d$ given by 
\begin{align*}
\psi(\epsilon) = \Big( \sqrt{\epsilon^{2d-1} \log \log \epsilon^{-1}}, \sqrt{\epsilon^{2d-3} \log \log \epsilon^{-1}} ,          \ldots ,\sqrt{\epsilon^3 \log \log \epsilon^{-1}}  , \sqrt{\epsilon \log \log \epsilon^{-1}}\Big)
\end{align*}
is an asymptotic index.  To see that Assumption~\ref{assump:1} is satisfied for the transformed diffusion $y_t^\epsilon$ defined by 
\begin{align}
\label{def:yepsik}
y^\epsilon_t = \Phi_{\psi(\epsilon)} (x_{\epsilon t}),  
\end{align}   
we observe that, by construction, $y^\epsilon_t$ solves the following SDE
\begin{equation}
\begin{aligned}
dy_1&= y_2 \, dt  \,, \\ 
 \vdots & \qquad \vdots \\ 
 dy_{d-1}&= y_d \, dt \,, \\
 dy_d &= \frac{dB_\epsilon(t)}{\sqrt{r(\epsilon)}}  \,,
\end{aligned}
\end{equation}
with $y_0=(y_1(0), \ldots, y_d(0))=0$ and $B_\epsilon(t)=\epsilon^{-1/2} W_{\epsilon t}$ being a standard Brownian motion on $\RR$.  Since, $b_\epsilon\equiv b$ is a linear function on $\RR^d$ and $\sigma_\epsilon \equiv \sigma$ is a constant matrix,  Assumption~\ref{assump:1} is satisfied. In addition, by Lemma~\ref{lem:1}, Assumption~\ref{assump:2} holds true for $y^\epsilon_t$.  
Furthermore, since the process is non-explosive, we may set $t_*=1$ in the statements of Theorem~\ref{thm:main} and Corollary~\ref{cor:1}.      

For 
 the projection  $\pi:\RR^d\rightarrow \RR$  onto the first coordinate, consider the continuous map $F:\EE([0, 1]; \RR^d)\rightarrow \RR\cup \{ \Delta\}$ given by $F(g)=\pi g_1$ if $g_1\in \RR^d$ and $F(g) = \Delta$ if $g_1 =\Delta$, where $g_1 = g_{t = 1}$.  Observe that $\RR \cup \{ \Delta \}$ is a Hausdorff space.  Then, Corollary~\ref{cor:1} implies that $\{ F(y^\epsilon) \}_{0< \epsilon < \epsilon_*}$ is relatively compact in $\RR \cup \{ \Delta \}$.  Furthermore, for $f\in \C^0([0,1]; \RR)$ let
\begin{align}
J_1(f)= \int_0^1 \int_0^{t_2} \ldots \int_0^{t_{d-1}} f_{t_d} \, dt_d \ldots dt_2 \,.
\end{align}       
Then, the almost sure limit set  of $(y^\epsilon_1)_1$ as $\epsilon \rightarrow 0$ is given by 
\begin{align*}
F(\K_0)= \{ J_1(f)\, : \,  \textstyle{\frac{1}{2}\int_0^1} (\dot{f}_s)^2 \, ds \leq 1 \}. 
\end{align*}
The embedding $H^1  ([0, 1]) \hookrightarrow L^\infty([0, 1])$ implies that the constants 
\begin{align*}
M&= \sup \{ J_1(f)\, : \,  \textstyle{\frac{1}{2}\int_0^1} (\dot{f}_s)^2 \, ds \leq 1 \},\\
m&= \inf\{ J_1(f) \, : \, \textstyle{\frac{1}{2}\int_0^1} (\dot{f}_s)^2 \, ds \leq 1 \},
\end{align*}
are finite and by choosing $f_s = \pm s$, one has $M, m \neq 0$, and therefore almost surely  
\begin{align*}
\limsup_{\epsilon \rightarrow 0} \frac{x_1(\epsilon)}{ \sqrt{\epsilon^{2d-1} \log \log \epsilon^{-1}}} &=M>0 \,,\\
\liminf_{\epsilon \rightarrow 0} \frac{x_1(\epsilon)}{ \sqrt{\epsilon^{2d-1} \log \log \epsilon^{-1}}} &=m.\end{align*}
Moreover, $m=-M$ since $J$ is an odd function of $f$.

\end{Example}

\begin{Example}
\label{ex:IK2}
This example shows the utility of Corollary~\ref{cor:2}.  
Consider again the same system as in Example~\ref{ex:IK}, but with $d=2$ and the process $x_t=(x_1(t), x_2(t))$ starting from a general initial condition $x_0=(x_1(0), x_2(0)) \in \RR^2$.  
Equivalently, we can consider the process solving 
\begin{equation}\label{eqn:dtk}
dx_1 = (x_2 + c)\, dt, \qquad dx_2 = dW_t
\end{equation}
for some $c$, with $x_0 = 0$. 
Again, our goal is to obtain a LIL for the first coordinate $x_1$.  
If we define
\begin{align}
\label{eqn:cocIK}
y_1^\epsilon(t)&= \frac{x_1(\epsilon t)-x_1(0)- t \epsilon x_2(0)}{\sqrt{\epsilon^3 \log \log \epsilon^{-1}}} + x_1(0) + tx_2(0) \,,\\
\nonumber y_2^\epsilon(t)&= \frac{x_2(t\epsilon)-x_2(0)}{\sqrt{\epsilon \log \log \epsilon^{-1}}} + x_2(0) \,,
\end{align}    
then  $y_t^\epsilon :=(y_1^\epsilon(t), y_2^\epsilon(t))$ satisfies 
\begin{align*}
dy_1^\epsilon&= y_2^\epsilon \, dt \,,\\
dy_2^\epsilon&= \tfrac{1}{\sqrt{r(\epsilon)}} dW^\epsilon_t \,,  
\end{align*}
where $(y_1^\epsilon(0), y_2^\epsilon(0))= (x_1(0), x_2(0))$ and $W^\epsilon_t= \tfrac{1}{\sqrt{\epsilon}} W_{\epsilon t}$.  Note that Assumption~\ref{assump:1} is clearly satisfied for the process $y_t^\epsilon$.  
Due to the explicit dependence on time, the mapping $x \mapsto y^\epsilon$ in \eqref{eqn:cocIK} does not satisfy assumptions of 
Lemma~\ref{lem:1} if $x_2(0)\neq 0$.  However, Corollary~\ref{cor:2} ensures that  Assumption~\ref{assump:2} is satisfied.  Also, 
the solution of the associated deterministic problem 
\begin{equation}
\dot{y}_1 = y_2, \qquad \dot{y}_2 = \dot{f} 
\end{equation}
is given by 
\begin{equation}
y_2(t) = x_2(0) + f(t), \qquad y_1(t) = x_1(0) +  x_2(0)t + \textstyle{\int_0^t f(s) ds} \,,
\end{equation}
where $f(0) = 0$. 
If  $J_1: \C^0([0,1]; \RR)\rightarrow \RR$ is given by 
\begin{align}
J_1(f) = \int_0^1 f_s \, ds  \qquad \text{ and } \qquad M= \sup\{ J_1(f) \, : \, \tfrac{1}{2}\textstyle{\int_0^1 }(\dot{f}_s)^2 \, ds \leq1 \},
\end{align}    
then as in Example \ref{ex:IK}, one has $0 < M < \infty$ and, $\PP$-almost surely,
\begin{align}
\label{eqn:limsupIK}
\limsup_{\epsilon \rightarrow 0}  \bigg(\frac{x_1(\epsilon )-x_1(0)- \epsilon x_2(0)}{\sqrt{\epsilon^3 \log \log \epsilon^{-1}}} \bigg) &= M \,, \\
\label{eqn:liminfIK}\liminf_{\epsilon \rightarrow 0}  \bigg(\frac{x_1(\epsilon )-x_1(0)-  \epsilon x_2(0)}{\sqrt{\epsilon^3 \log \log \epsilon^{-1}}} \bigg) &= -M.  
\end{align}
Note that~\eqref{eqn:limsupIK} implies that for any $\delta >0$ there exists a (random) sequence  $\epsilon_n = \epsilon_n(\omega) > 0$  such that 
$\epsilon_n \to 0$ as $n \to \infty$ and 
\begin{align*}
x_1(\epsilon_n) \geq x_1(0)+ \epsilon_n x_2(0) + \sqrt{\epsilon_n^3 \log \log \epsilon_n^{-1}} (M-\delta)  \,.
\end{align*}
 Similarly, using~\eqref{eqn:liminfIK}; for any $\delta >0$ there exists $\bar{\epsilon}_n = \bar{\epsilon}_n(\omega)>0$ such that 
 $\bar{\epsilon}_n \to 0$ as $n \to \infty$ and 
\begin{align*}
x_1(\bar{\epsilon}_n) \leq x_1(0)+ \bar{\epsilon}_n x_2(0) + \sqrt{\bar{\epsilon}_n^3 \log \log \bar{\epsilon}_n^{-1}} (-M+\delta)  \,.
\end{align*}

Now suppose $x_2(0) > 0$ as the case $x_2(0) < 0$ is treated similarly. 
 Since $\sqrt{\epsilon^3 \log \log \epsilon^{-1}} \ll \epsilon$ for small $\epsilon$, we obtain that 
 $x_1(t) \geq x_1(0)$ for all small times $t$. Intuitively, if we rewrite our system as \eqref{eqn:dtk}, then since $x_0 = 0$ one has $x_2 < c = x_2(0)$ for all small times. Thus, $x_1$ is increasing 
 for small times as our analysis shows. Observe that if there was a noise in the $x_1$ coordinate, then it would change sign on the time scale $\sqrt{\epsilon} \gg \epsilon$, and therefore 
 $x_1(t) - x_1(0)$ would change sign as well.  
 \end{Example}

\begin{Example}
As our next example, we consider the following diffusion on $\RR^2$
\begin{equation}\label{eqn:sde2d}
\begin{aligned}
dx_1 &= (x_1^2 -x_2^2)\, dt \,, \\
 dx_2 &= 2 x_1 x_2 \, dt + dB_t \,,
\end{aligned}
\end{equation}
where $x_t=(x_1(t), x_2(t))$ has initial condition $x_0=(x_1(0), x_2(0))=(0,0)$ and $B_t$ is a standard, one-dimensional Brownian motion on $(\Omega, \mathcal{F},\PP)$.  This particular diffusion has been extensively studied (see~\cite{AKM_12, BHW_12, GHW_11, HerMat_15i}).  In particular, one of the main results in these works is that the diffusion defined by~\eqref{eqn:sde2d} is non-explosive for all initial conditions in $\RR^2$.  This is true despite the fact that the associated deterministic dynamics (obtained by deleting $dB_t$ from~\eqref{eqn:sde2d}) explodes in finite time when started from $(s, 0)$ with $s>0$.  Here, 
we  study the behavior at time zero of the first coordinate, $x_1$ (the second one is trivial).   

First, define the family $\{ \Phi_\alpha \}_{\alpha \succ0}$ of weak contractions centered at $(0,0)\in \RR^2$ by 
\begin{align*}
\Phi_{\alpha}(y_1,y_2) = \big( y_1 \alpha_1^{-1}, y_2 \alpha_2^{-1}\big).  
\end{align*}
We also define, for $\epsilon_*>0$ small enough, the asymptotic index $\psi: [0, \epsilon_*]\rightarrow [0, \infty)^2$ as 
\begin{align*}
\psi(\epsilon) = (\epsilon^2 \log \log \epsilon^{-1}, \sqrt{\epsilon \log \log \epsilon^{-1}}) \,.
\end{align*}
Then, by \eqref{eqn:beps1} and \eqref{eqn:sigeps1} the process 
\begin{align*}
y_t^\epsilon = \Phi_{\psi(\epsilon)}(x_{\epsilon t})
\end{align*}
satisfies the SDE
\begin{align*}
dy_1&= \epsilon^3 \log \log \epsilon^{-1} y_1^2 \, dt - y_2^2 \, dt,\\
dy_2&= 2\epsilon^3 \log \log \epsilon^{-1} y_1y_2\, dt + \frac{1}{\sqrt{r(\epsilon)} }\, dB_\epsilon \,,
\end{align*}
where $(y_1(0), y_2(0))=(0,0)$ and $B_\epsilon$ is a standard, one-dimensional Brownian motion.  Note that 
\begin{align*}
b_\epsilon (y_1, y_2) &= ( \epsilon^3 \log \log \epsilon^{-1} y_1^2  - y_2^2, \, 2\epsilon^3 \log \log \epsilon^{-1} y_1 y_2 )\end{align*}  
and $\sigma_\epsilon \equiv \sigma$ is a constant matrix.  Since
\begin{align*}
b_\epsilon(y_1, y_2) \rightarrow b(y_1, y_2)= (-y_2^2, 0)
\end{align*}  
uniformly on compact subsets of $\RR^2$ as $\epsilon \rightarrow 0$, it follows that Assumption~\ref{assump:1} is satisfied
and by Lemma \ref{lem:1}, Assumption \ref{assump:2} is also satisfied.  Thus,  Theorem~\ref{thm:main} and Corollary~\ref{cor:1} apply for any sufficiently small $t_*\in (0,1]$.  However, because the limiting ODE
\begin{align*}
\dot{y}_1 &= - y_2^2  \,,\\
\dot{y}_2&= \dot{f} \,,
\end{align*}
is well-defined for all times $t\in [0, 1]$ for any $f\in \C^0([0, 1]; \RR^2)$ (simply integrate it), we may take $t_*=1$.  

With a slight abuse of notation, we let $\pi :\RR^2\rightarrow \RR$ denote the projection onto the first coordinate, and $F: \EE([0, 1]; \RR^2)\rightarrow \RR \cup \{\Delta\}$ be such that $F (g)= \pi g_{1}$ if $g_{1} \in \RR^2$ and $\Delta$ otherwise (cf. Example \ref{ex:IK}).  Corollary~\ref{cor:1} implies that $\{ F(y^\epsilon ) \}_{\epsilon \in (0, \epsilon_*]}$ is relatively compact in $\RR \cup \{ \Delta \}$.  Moreover, if $J_2: \C_0([0, 1]; \RR)\rightarrow (-\infty, 0]$ is given by 
\begin{align*}
J_2(f) = -\int_0^{1} f_s^2 \, ds,
\end{align*}        
then the a.s. limit set of $F(y^\epsilon)$ as $\epsilon \rightarrow 0$ is given by
\begin{align*}
F(\K_0) = \{ J_2(f) \, : \,  \textstyle{\frac{1}{2}\int_0^1 (\dot{f}_s)^2 \, ds} \leq 1\}.  
\end{align*}
Note that
\begin{align*}
-M:= \inf\{ J_2(f)\, : \, \textstyle{\frac{1}{2}\int_0^1 (\dot{f}_s)^2 \, ds} \leq 1\} \in (-\infty, 0),  
\end{align*}
since $H^1([0,1]) \hookrightarrow L^\infty([0,1])$, and consequently almost surely
\begin{align*}
\liminf_{\epsilon \rightarrow 0} \frac{x_1(\epsilon )}{\epsilon^3 \log \log \epsilon^{-1}} = -M \,, \qquad 
\limsup_{\epsilon \rightarrow 0} \frac{x_1(\epsilon )}{\epsilon^3 \log \log \epsilon^{-1}} =0.  \end{align*}

\end{Example}  

 \begin{Example}
Next, we consider the following Lorenz 96 model with $d=5$
\begin{equation}\label{eqn:LOR}
\begin{aligned}
dx_1&= (x_2 - x_4) x_5 \, dt -x_1 \, dt + dB_1 \,,\\
 dx_2&= (x_3-x_5) x_1 \, dt -x_2 \, dt + dB_2 \,, \\
 dx_3&= (x_4-x_1) x_2 \, dt -x_3 \, dt \,, \\
 dx_4 &= (x_5-x_2) x_3 \, dt -x_4 \, dt  \,, \\
 dx_5 &= (x_1-x_3) x_4 \, dt -x_5 \, dt \,,
\end{aligned}
\end{equation}
where the process $x_t=(x_1(t), x_2(t), \ldots, x_5(t))$ above is assumed to evolve on $\RR^5$ starting initially at $x_0=0$, and $B_i$, $i=1,2$, are independent, standard Brownian motions defined on $(\Omega, \mathcal{F},\PP)$.  Using nearly identical computations, one can also treat the general Lorenz 96 model in $d$ dimensions, but for clarity we prefer the concrete scenario above.  
Our interest  stems from the fact that the nonlinearities mimic pairwise interactions in the Navier-Stokes equation.  Here we will analyze the small-time behavior of $x_5$.   
 
 In this example, again the family $\{ \Phi_\alpha\}_{\alpha \succ 0}$ of weak contractions centered at $0\in \RR^5$ is the same  as above
 \begin{align*}
 \Phi_\alpha(y) = (y_1 \alpha_1^{-1}, y_2 \alpha_2^{-1}, \ldots, y_5 \alpha_5^{-1} ).  
 \end{align*}
For $\epsilon_*>0$ small enough, we define $\psi:[0, \epsilon]\rightarrow [0, \infty)^5$ by 
 \begin{align*}
\psi(\epsilon)= ( \sqrt{\epsilon \log \log \epsilon^{-1}}, \sqrt{\epsilon \log \log \epsilon^{-1}}, \epsilon^2 \log \log \epsilon^{-1}, \sqrt{\epsilon^7 (\log \log \epsilon)^3}, \epsilon^{5} (\log \log \epsilon^{-1})^2) 
\end{align*}
and note that $\psi$ is an asymptotic index for~\eqref{eqn:LOR}.

Let $y^\epsilon$ be given by $y^\epsilon_t = \Phi_{\psi(\epsilon)}(x_{\epsilon t})$ and note that $y^\epsilon$ solves the following SDE
\begin{align*}
dy_1&= b_{1, \epsilon}(y_1, y_2, y_4, y_5) \, dt + \frac{1}{\sqrt{r(\epsilon)}} \, dB_{1, \epsilon} \,,\\
dy_2&=  b_{2, \epsilon}(y_1, y_2, y_3, y_5) \, + \frac{1}{\sqrt{r(\epsilon)}} \, dB_{2, \epsilon} \,, \\
dy_3&=-y_1 y_2 \, dt + b_{3, \epsilon}(y_2, y_3, y_4) \, dt  \,,\\
dy_4&= -y_2 y_3 \, dt + b_{4, \epsilon}(y_3, y_4, y_5) \, dt \,,\\
dy_5&=y_1 y_4 \, dt + b_{5, \epsilon}(y_3, y_4, y_5) \, dt\,, 
\end{align*}   
where $b_{i,\epsilon}\rightarrow 0$, $i=1,2,3,4,5$, as $\epsilon \rightarrow 0$ uniformly on compact subsets in $\RR^5$.  Furthermore, one can check that the $b_{i, \epsilon}$ are locally Lipschitz on $\RR^5$, so that Assumption~\ref{assump:1} is satisfied.  Thus Theorem~\ref{thm:main} and Corollary~\ref{cor:1} both apply.  In this case, the limiting ODE is 
\begin{gather} \label{eqn:rel}
\dot{y}_1= \dot{f}_1 \,,\qquad 
\dot{y}_2=  \dot{f}_2 \,,\qquad 
\dot{y}_3 =-y_1 y_2 \,,\qquad 
\dot{y}_4= -y_2 y_3 \,,\qquad 
\dot{y}_5= y_1 y_4 \,,
\end{gather}   
with $y(0) = 0$.

Note that we can solve \eqref{eqn:rel} explicitly and take $t_*=1$ again.  
Indeed, let $\pi_5 :\RR^5\rightarrow \RR$ denote the projection onto the fifth coordinate and, slightly abusing notation again, define 
$F: \EE([0,1]; \RR^5)\rightarrow \RR \cup \{ \Delta \}$ by $F(g) = \pi_5 g_1$ if $g_1 \in \RR^5$ and $F(g)=\Delta$ otherwise.  Applying Corollary~\ref{cor:1}, we note that $\{ F(y^\epsilon) \}_{0< \epsilon \leq \epsilon_*}$ is relatively compact in $\RR\cup \{ \Delta\}$.  Furthermore, define
$J_3: \C_0([0,1]; \RR^2)\rightarrow \RR$ as $y_5$ in \eqref{eqn:rel}
\begin{align*}
J_3(f_1, f_2) = \int_0^1 f_1(t) \left( \int_0^t f_2(s) \left( \int_0^s f_1(r) f_2(r) \, dr\right) \, ds\right) dt \,,
\end{align*}  
where $f_1(0) = f_2(0) = 0$.  Let
\begin{align*}
M&= \sup \{ J_3(f_1, f_2) \,: \, \textstyle{\frac{1}{2}\int_0^1 |(\dot{f}_1, \dot{f}_2)|^2\leq 1} \}\\
m&= \inf\{ J_3(f_1, f_2) \, : \, \textstyle{\frac{1}{2}\int_0^1 |(\dot{f}_1, \dot{f}_2)|^2\leq 1}\} \,.
\end{align*}
Setting $\dot{f} = \dot{g} = 1$ we see that $J_3$ clearly attains positive values.  However, seeing that $J_3$ can realize negative is not immediately obvious. Nevertheless, by choosing $f_1(t) = \sin(5t)$ and $f_2(t) = \sin(t)$
one has $J_3(f_1, f_2) \approx -0.00605$. Thus by a proper rescaling to guarantee the constraint on $\dot{f}_1$ and $\dot{f}_2$, we obtain 
then (almost surely)
\begin{align*}
\limsup_{\epsilon \rightarrow 0 } \frac{x_5(\epsilon)}{\epsilon^{5} (\log \log \epsilon^{-1})^2}&= M \,,\\
\liminf_{\epsilon \rightarrow 0 } \frac{x_5(\epsilon)}{\epsilon^{5} (\log \log \epsilon^{-1})^2}&= m \,,
\end{align*}
for some $M > 0$ and $m < 0$. 
\end{Example}

\begin{Remark}
Because the LIL at time zero is local phenomena, there is nothing important about the SDEs above being defined on all of $\RR^d$.  One can consider the same equations (or different ones) defined in a neighborhood $U$ of the initial condition $x$, with scalings taking the same forms.  One then applies the general results of this paper, but with $\ustar$ and $U_\epsilon$ in place of all of $\RR^d$.  See also Example~\ref{ex:1}.       
\end{Remark}

\begin{Remark} 
 Let $W_t$ be a standard, real-valued Brownian motion defined on $(\Omega, \mathcal{F}, \PP)$. Chung's LIL~\cite{Chung_48} for $W_t$ at time zero states that, $\PP$-almost surely, 
 \begin{equation}\label{e.ChungsLIL.small}
\liminf_{\varepsilon \rightarrow 0}\bigg\{ \sqrt{\frac{\log \log  \varepsilon^{-1}}{\varepsilon}} \max_{0 \leqslant t \leqslant \varepsilon } \vert W_t \vert \bigg\}= \frac{\pi}{\sqrt{8}}.  
\end{equation}
A natural question is whether Theorem~\ref{thm:main} and Corollary~\ref{cor:1} can be used to deduce a similar results for SDEs in the examples above? The short answer is that the results do apply, but do not obviously capture the precise asymptotic behavior that one would expect for the $\liminf$ of the running maximum.  To see why in more detail, we have provided the next example.  
\end{Remark}  

\begin{Example}
Consider~\eqref{eqn:itkol} for $d=2$ and define the maximum process for the first coordinate $x_1^*(t):=\sup_{s\in [0,t]} |x_1(s)|$.  Using the scaling property~\cite[equation~(2.1)]{KC_98} 
$$
x_1^*(t) \overset{d}{=} t^{3/2} x_1^*(1) \qquad \forall t\geq 0
$$ along with inversion,  one of the main results in~\cite{KC_98}
states that, $\PP$-almost surely,  
\begin{align}
\label{eqn:davar}
\liminf_{\epsilon\rightarrow 0}\,  \frac{x_1^*(\epsilon)}{\phi(\epsilon)} =c, \qquad \phi(\epsilon):= \frac{\epsilon^{3/2}}{(\log \log \epsilon^{-1})^{3/2}},
\end{align}
for some deterministic constant $c\in (0,\infty)$.  To investigate properties of $x^*$ in our framework, let $\pi:\RR^2\rightarrow \RR$ be the projection onto the first coordinate, and let 
$F:\EE([0,1]; \RR^2)\rightarrow \RR\cup \{\Delta\}$ be a continuous mapping given by $F(g)= \sup_{s\in [0,1]} | \pi g_s|$ if $g_1\in \RR^2$ and $F(g)=\Delta$ if $g_1=\Delta$.  Following Example~\ref{ex:IK}, 
if $J:\C^0([0, 1]; \RR)\rightarrow \RR$ is given by
\begin{align}
J(f)= \sup_{0\leq t\leq 1}\bigg| \int_0^t f_s \, ds \bigg|  \,,
\end{align}   
then the almost sure limit set of $F(y^\epsilon)$ as $\epsilon \rightarrow 0$ is 
\begin{align*}
F(\K_0) = \{ J(f) \, : \, \textstyle{\frac{1}{2}\int_0^1 (\dot{f}_s)^2 \, ds} \leq 1 \}.
\end{align*}
Here, we recall that $y^\epsilon$ is as in~\eqref{def:yepsik} with $d=2$.  From this we deduce that, almost surely,
\begin{align}
\label{eqn:chung1}
&\limsup_{\epsilon \rightarrow 0 } \frac{x_1^*(\epsilon)}{\epsilon^{3/2} \sqrt{\log\log \epsilon^{-1}}} = M\\
\label{eqn:chung2}&\liminf_{\epsilon \rightarrow 0 } \frac{x_1^*(\epsilon)}{\epsilon^{3/2} \sqrt{\log\log \epsilon^{-1}}} =0,
\end{align}
for some constant $M\in (0, \infty)$.  Note that while relation~\eqref{eqn:chung1} provides precise asymptotics, the second relation~\eqref{eqn:chung2}, though consistent with~\eqref{eqn:davar}, does not establish~\eqref{eqn:davar}.  This is because the $\liminf$ of $x^*$ acts on a smaller scale which is not captured by our scaling.

\end{Example}

 \section{Large Deviations}\label{sec:ld}

In this section, we outline two key results that are used to prove Theorem~\ref{thm:main}.  Both results follow almost immediately from the existing literature.  Here, we only provide slight adjustments, if needed, 
to connect with our setting. 

The first result stated below is a consequence of~\cite[Theorem~1.1]{Bal_88}, which   is an improvement of~\cite[2.4 Th\'{e}or\`{e}me, Chapitre III]{Az_80}.  
The only difference here is that we are not assuming ~\cite[(A.3)]{Bal_88}, but our proof follows   
 nearly identical localization procedure for an open set $\ustar$ as opposed to $\RR^d$ in \cite{Bal_88}.  
 
 Recall that $(x, f) \mapsto S_x(f)$ is the solution operator of \eqref{eqn:det}, $r(\epsilon) = \log\log \epsilon^{-1}$, 
 and the distance-like function $d_t$ is defined in \eqref{eqn:dfmd}.  We also recall that for every $\epsilon >0$, $B_t^\epsilon$ denotes a standard Brownian motion on $\RR^k$,
  and that $\epsilon_*>0$ and $\ustar$ were fixed in Assumption \ref{assump:1}.

  \begin{Theorem}
  \label{thm:LD1}
  Suppose that Assumption~\ref{assump:1} is satisfied and let $K\subset \ustar$ be a compact set  and $t\in (0,1]$.  For every $\rho >0, R>0, a>0$, there exist 
  $\epsilon_0  \in (0, \epsilon_*], \alpha >0$ such that for all $x\in K, f\in \C^0([0, t]; \RR^k)$ and $g=S_x(f)$ with 
  \begin{align*}
  \int_0^t |\dot{f}_s|^2 \, ds \leq a \qquad \text{ and } \qquad g([0, t]) \subset K,
  \end{align*}   
  we have the following estimate
  \begin{align*}
  \PP \bigg\{d_{t}\Big(\tfrac{1}{\sqrt{r(\epsilon)}} B^\epsilon, f\Big) \leq \alpha\,\, \text{ and }\,\, d_{t}(y^\epsilon, g) > \rho \bigg\} \leq e^{-Rr(\epsilon)}
  \end{align*} 
  for all $0<\epsilon \leq \epsilon_0$.  
  \end{Theorem}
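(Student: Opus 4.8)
The plan is to reduce the statement to a known large-deviation estimate of Freidlin–Wentzell/Azencott type — specifically \cite[Theorem~1.1]{Bal_88}, which refines \cite[2.4 Th\'eor\`eme, Chapitre III]{Az_80} — applied not to the fixed coefficients $b,\sigma$ but to the $\epsilon$-dependent coefficients $b_\epsilon,\sigma_\epsilon$, and then to absorb the discrepancy between $(b_\epsilon,\sigma_\epsilon)$ and $(b,\sigma)$ using Assumption~\ref{assump:1}(ii). The key structural point is that the small parameter in \eqref{eqn:sdeas} is $1/r(\epsilon)$ in front of the noise, so the natural rate function is the Cramer transform $\lambda$ of \eqref{def:lambda}, and "probability $\leq e^{-Rr(\epsilon)}$" is exactly the statement that the event in question has Freidlin–Wentzell cost at least $R$. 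Because \cite{Bal_88} is stated globally on $\RR^d$ under a growth hypothesis \cite[(A.3)]{Bal_88} that we do not assume, the first move is a standard localization: fix a compact $L\subset\ustar$ with $K\subset\inter(L)$, modify $b_\epsilon,\sigma_\epsilon$ outside $L$ to globally Lipschitz functions $\bar b_\epsilon,\bar\sigma_\epsilon$ (e.g.\ by multiplying by a smooth cutoff supported in $\ustar$), note that the modified coefficients still converge uniformly on $K$ to $b,\sigma$, and observe that on the event $\{d_t(y^\epsilon,g)\leq\rho\}$ with $\rho$ small enough and $g([0,t])\subset K$, the process $y^\epsilon$ coincides with the solution $\bar y^\epsilon$ of the modified equation up to time $t$. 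Thus it suffices to prove the estimate for the globally-defined modified system, for which \cite[Theorem~1.1]{Bal_88} applies verbatim.

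The second step is the comparison of the flows $S^\epsilon_x$ (solution operator of $\dot g = b_\epsilon(g)+\sigma_\epsilon(g)\dot f$) and $S_x$ (the limiting operator of \eqref{eqn:det}). Writing $g^\epsilon = S^\epsilon_x(f)$ and $g = S_x(f)$, a Gronwall estimate using the local Lipschitz bounds on $L$ and Assumption~\ref{assump:1}(ii) gives $d_t(g^\epsilon,g)\leq C(a,L)\,\omega(\epsilon)$ where $\omega(\epsilon):=\sup_{y\in L}\big(|b_\epsilon(y)-b(y)|+\|\sigma_\epsilon(y)-\sigma(y)\|\big)\to 0$; the factor depending on $a$ enters through $\int_0^t|\dot f_s|\,ds\leq \sqrt{a}$. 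Hence for $\epsilon$ small enough $d_t(g^\epsilon,g)\leq\rho/2$, and the event $\{d_t(y^\epsilon,g)>\rho\}$ is contained in $\{d_t(y^\epsilon,g^\epsilon)>\rho/2\}$. We have therefore reduced matters to the estimate with the limiting target $g$ replaced by $g^\epsilon=S^\epsilon_x(f)$ — i.e.\ the genuine skeleton of the $\epsilon$-th equation — which is precisely the form handled by \cite[Theorem~1.1]{Bal_88}: for every $\rho'>0,R>0,a>0$ there are $\alpha>0$ and $\epsilon_0>0$ so that, uniformly over $x\in K$ and $f\in\C_a$ with the relevant containment,
\begin{align*}
\PP\Big\{ d_t\big(\tfrac{1}{\sqrt{r(\epsilon)}}B^\epsilon, f\big)\leq\alpha \ \text{ and }\ d_t\big(\bar y^\epsilon, S^\epsilon_x(f)\big)>\rho' \Big\} \leq e^{-R r(\epsilon)}
\end{align*}
for all $0<\epsilon\leq\epsilon_0$. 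Taking $\rho'=\rho/2$ and combining with the two reductions above yields the claim.

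The main obstacle — and where care is genuinely needed — is making the constants $\alpha$ and $\epsilon_0$ \emph{uniform} over $x\in K$ and over all admissible $f\in\C_a$ simultaneously, since $f$ ranges over an infinite-dimensional set. Here one exploits that the map $S$ restricted to $\ustar\times\C_a$ is continuous and that $\C_a$, while not compact in $H^1$, is compact in the uniform topology on $\C([0,t];\RR^k)$ (weak $H^1$-compactness plus Arzel\`a–Ascoli), which is what \cite{Bal_88} in fact uses; alternatively, one notes that the estimate in \cite[Theorem~1.1]{Bal_88} is already stated uniformly in the control on bounded energy sets, so the uniformity is inherited. A secondary technical point is checking that replacing $\RR^d$ by $\ustar$ in the localization does not break the argument of \cite{Bal_88}; this is routine because the event under consideration forces $y^\epsilon$ (and hence $\bar y^\epsilon$) to stay in a fixed compact subset of $\ustar$, so only the behavior of the coefficients on that compact set matters, and there the convergence in Assumption~\ref{assump:1}(ii) together with local Lipschitz continuity supplies everything the proof of \cite{Bal_88} requires. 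No genuinely new idea beyond these standard large-deviation localization and Gronwall-comparison techniques should be needed.
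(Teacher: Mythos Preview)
Your approach is correct and is essentially the paper's: localize via a smooth cutoff $\varphi$ supported in $\ustar$ so that the modified coefficients $b_{\varphi,\epsilon},\sigma_{\varphi,\epsilon}$ are globally bounded and Lipschitz on $\RR^d$ and still converge uniformly, apply \cite[Theorem~1.1]{Bal_88} (with $r(\epsilon)$ in place of $\epsilon^{-2}$), and then transfer the estimate back to $y^\epsilon$ using that $y^\epsilon$ and the modified process $y^\epsilon_\varphi$ agree until exit from the buffer compact $K'\supset K$.

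The one difference is your ``second step'': you insert a Gronwall comparison between the $\epsilon$-skeleton $g^\epsilon=S^\epsilon_x(f)$ and the limiting skeleton $g=S_x(f)$, and only then invoke \cite{Bal_88} with target $g^\epsilon$. This detour is unnecessary. \cite[Theorem~1.1]{Bal_88} is already formulated for $\epsilon$-dependent coefficients converging uniformly to a limit and delivers the estimate directly with the \emph{limiting} skeleton $g=S_x(f)$ as target, uniformly over $x\in K$ and $f$ in bounded-energy sets. The paper therefore cites it once for the event $\{d_t(y^\epsilon_\varphi,g)>\rho_0\}$ and is done; your intermediate Gronwall step and your extended uniformity discussion are both already absorbed inside that citation. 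The detour is harmless --- it just re-derives by hand a piece of what the cited theorem contains --- but it is not needed.
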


  \begin{proof}
  Since $K$ is compact, there is $\rho_0\in (0, \rho]$ and a compact set $K' \subset \ustar$ containing a $2 \rho_0$ neighborhood of $K$.    
  Let $V\subset \RR^d$ be a bounded, open set such that $K' \subset V \subset \ustar$ and let $\varphi:\RR^d\rightarrow [0,1]$ be $C^\infty$ function with $\varphi=1$ on $K'$ and $0$ on $V^c$.  
  For any function $p$ on $\ustar$ we define $p_\varphi$ on $\RR^d$ by  
\begin{align*}
 p_\varphi(y) = \begin{cases}
  p(y) \varphi(y) & \qquad y \in \ustar \,,\\
  0 & \qquad y \notin \ustar \,.
  \end{cases}
  \end{align*}    
 Thus, to $b, \sigma, b_\epsilon, \sigma_\epsilon$ we associate respectively $b_\varphi, \sigma_\varphi, b_{\varphi,\epsilon}, \sigma_{\varphi,\epsilon}$.  Then, by construction and Assumption~\ref{assump:1},  $b_\varphi, \sigma_\varphi, b_{\varphi,\epsilon}, \sigma_{\varphi,\epsilon}$ are bounded and globally Lipschitz on $\RR^d$ and  $b_{\varphi,\epsilon} \rightarrow b_\varphi$, $\sigma_{\varphi,\epsilon}\rightarrow \sigma_\varphi$ uniformly on $\RR^d$ as $\epsilon \rightarrow 0$.  Let $y^\epsilon_\varphi$ be the unique solution of the It\^{o} SDE
  \begin{align*}
  \begin{cases}
  dy_t = b_{\varphi,\epsilon}(y_t) \, dt + \frac{1}{\sqrt{r(\epsilon)}} \sigma_{\varphi,\epsilon}(y_t) \,dB^{\epsilon}_t ,\\
 y_0=x.  
 \end{cases}
 \end{align*}
By standard arguments, the solution above belongs almost surely to  $\C_x([0, t]; U)$.  
  Then~\cite[Theorem 1.1]{Bal_88} with $h(\epsilon)\equiv 1$ and $\epsilon^{-2}$ replaced by $r(\epsilon) <  \epsilon^{-2}$,
  provides the existence of $\epsilon_0>0, \alpha >0$ such that for all $x\in K$, $f\in \C^0([0,t]; \RR^k)$ and $g=S_x(f)$ with 
  \begin{align*}
  \int_0^t |\dot{f}_s|^2 \, ds \leq a \qquad \text{ and } \qquad g([0, t]) \subset K
  \end{align*}   
  we have, for every $\epsilon \in (0, \epsilon_0]$
  \begin{align*}
  \PP \bigg\{ d_{ t}\Big(\tfrac{1}{\sqrt{r(\epsilon)} }B^\epsilon, f\Big) \leq \alpha\,\, \text{ and }\,\, d_{t}(y^\epsilon_\varphi, g) > \rho_0 \bigg\} \leq e^{- Rr(\epsilon)} .
  \end{align*}
 However, $y^\epsilon_\varphi$ and $y^\epsilon$ coincide until the first time both $y^\epsilon_\varphi$ and $y^\epsilon$ exit $K'$.  Since $g([0, t]) \subset K$ and $\rho > \rho_0$
 \begin{align*}
  \PP \bigg\{ d_{ t}\Big(\tfrac{1}{\sqrt{r(\epsilon)} }B^\epsilon, f\Big) \leq \alpha\,\, \text{ and }\,\, d_{t}(y^\epsilon, g) > \rho \bigg\}&\leq 
    \PP \bigg\{ d_{ t}\Big(\tfrac{1}{\sqrt{r(\epsilon)} }B^\epsilon, f\Big) \leq \alpha\,\, \text{ and }\,\, d_{t}(y^\epsilon, g) > \rho_0 \bigg\} \\
    &\leq 
  \PP \bigg\{ d_{ t}\Big(\tfrac{1}{\sqrt{r(\epsilon)} }B^\epsilon, f\Big) \leq \alpha\,\, \text{ and }\,\, d_{t}(y^\epsilon_\varphi, g) > \rho_0 \bigg\} \\
  &\leq e^{- Rr(\epsilon)} 
  \end{align*}
and the proof is finished.  
  \end{proof}
  
  Recall the Cramer transform $\lambda:\EE([0, 1]; \ustar)\rightarrow [0, +\infty]$ introduced in~\eqref{def:lambda}, and for any $A\subset \EE_x([0,1]; \ustar)$ Borel set define 
  \begin{align}
  \Lambda(A)= \inf_{g\in A} \lambda(g).  
  \end{align} 

  \begin{Theorem}
  \label{thm:LD2}
  Suppose that Assumption~\ref{assump:1} is satisfied.  For any Borel set $A\subset \EE_x([0,1];\ustar)$, 
  \begin{align}
  -\Lambda(\text{\emph{interior}}(A)) \leq \liminf_{\epsilon \rightarrow 0}\frac{1}{ r(\epsilon)} \log \PP \{ y^\epsilon \in A\} \leq \limsup_{\epsilon \rightarrow 0} \frac{1}{r(\epsilon)} \log \PP \{ y^\epsilon \in A \} \leq - \Lambda(\overline{A})\,,  
  \end{align}
 where $\text{\emph{interior}}(A)$ and $\overline{A}$ respectively denote the interior and closure of $A$.  
  \end{Theorem}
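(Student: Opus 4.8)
The plan is to derive Theorem~\ref{thm:LD2} (a full large deviation principle for $y^\epsilon$ with rate function $\lambda$ and speed $r(\epsilon)$) from the uniform estimate in Theorem~\ref{thm:LD1} together with the classical Schilder-type LDP for the rescaled Brownian motion $\frac{1}{\sqrt{r(\epsilon)}}B^\epsilon$ in $\C^0([0,1];\RR^k)$. The standard route --- which is exactly the one used in Azencott~\cite{Az_80} and Baldi~\cite{Bal_88} --- is the contraction-principle-with-approximation argument: $y^\epsilon$ is ``almost'' the continuous image $S_x(\cdot)$ of $\frac{1}{\sqrt{r(\epsilon)}}B^\epsilon$, the map $S_x$ fails to be globally continuous but is continuous on each level set $\C_a$ by Azencott~\cite[Proposition 2.8]{Az_80}, and Theorem~\ref{thm:LD1} quantifies the failure of this near-identity.

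First I would prove the lower bound. Fix an open set $G \subset \EE_x([0,1];\ustar)$ and a point $g \in G$ with $\lambda(g) < \infty$; it suffices to show $\liminf_{\epsilon\to 0}\frac{1}{r(\epsilon)}\log\PP\{y^\epsilon\in G\} \geq -\lambda(g)$. Since the infimum in \eqref{def:lambda} is attained, pick $f\in\C^0$ with $S_x(f) = g$ and $\tfrac12\int_0^1|\dot f_s|^2\,ds = \lambda(g)$; one may assume $g([0,1])$ lies in a compact $K\subset\ustar$ (truncating near the explosion time and using the topology on $\EE$, so that a neighborhood of $g$ in $\EE$ only constrains $g$ on $[0,s]$ for $s<\tau_1(g)$ — this is where working in the explosive-path space pays off). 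Choose $\rho>0$ so the $d_t$-ball of radius $\rho$ around $g$ lies in $G$. Apply Theorem~\ref{thm:LD1} with this $\rho$, with $R = \lambda(g)+1$ (or any $R>\lambda(g)$), and with $a = 2\lambda(g)$ to get $\alpha>0$ and $\epsilon_0$ such that, for $\epsilon\le\epsilon_0$,
\begin{align*}
\PP\{y^\epsilon\in G\} &\geq \PP\Big\{d_t\big(\tfrac{1}{\sqrt{r(\epsilon)}}B^\epsilon, f\big)\le\alpha,\ d_t(y^\epsilon,g)\le\rho\Big\}\\
&\geq \PP\Big\{d_t\big(\tfrac{1}{\sqrt{r(\epsilon)}}B^\epsilon, f\big)\le\alpha\Big\} - e^{-R\,r(\epsilon)}.
\end{align*}
The Schilder LDP lower bound gives $\liminf \frac{1}{r(\epsilon)}\log\PP\{d_t(\tfrac{1}{\sqrt{r(\epsilon)}}B^\epsilon,f)\le\alpha\} \geq -\tfrac12\int_0^1|\dot f_s|^2\,ds = -\lambda(g)$; since $R>\lambda(g)$, the subtracted term $e^{-R r(\epsilon)}$ is exponentially negligible relative to $e^{-\lambda(g) r(\epsilon)}$, so the difference has the same exponential rate and the lower bound follows by taking the infimum over $g\in G$.

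Next the upper bound. Fix a closed set $C\subset\EE_x([0,1];\ustar)$ and let $a_0 := \Lambda(\overline C) = \Lambda(C)$; we may assume $a_0>0$. For $a<a_0$ fixed, note the level set $\{g: g_0=x,\ \lambda(g)\le a\}$ is compact in $\EE$ by \eqref{eqn:Kcrosslambda} and disjoint from $C$, hence lies in a compact $K\subset\ustar$ and at positive $d_t$-distance $3\rho$ from $C$. Cover the compact set $\{f\in\C_{2a}: g = S_x(f) \text{ with } g_0=x\}$ by finitely many $d_t$-balls $B(f_i,\alpha_i)$ using uniform continuity of $S_x$ on $\C_{2a}$ (property (p3)/Azencott), arranged so that $d_t$-closeness to some $f_i$ forces $y^\epsilon$ (via $g_i = S_x(f_i)$) to be within $\rho$ of $C^c$ unless the bad event of Theorem~\ref{thm:LD1} occurs. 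Then decompose
\begin{align*}
\PP\{y^\epsilon\in C\} &\leq \PP\Big\{\tfrac{1}{\sqrt{r(\epsilon)}}B^\epsilon\notin\C_{2a}^{(\text{nbhd})}\Big\} + \sum_i \PP\Big\{d_t\big(\tfrac{1}{\sqrt{r(\epsilon)}}B^\epsilon,f_i\big)\le\alpha_i,\ d_t(y^\epsilon,g_i)>\rho\Big\}.
\end{align*}
The first term is controlled by the Schilder upper bound (probability that the rescaled Brownian path has "energy" near or above $2a>a_0>a$, exponentially bounded by $-a\,r(\epsilon)$ up to $o(r(\epsilon))$); each summand is bounded by $e^{-R r(\epsilon)}$ via Theorem~\ref{thm:LD1} with $R$ chosen larger than $a$. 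Taking $\frac{1}{r(\epsilon)}\log$, using finiteness of the sum, and letting $a\uparrow a_0$ yields $\limsup\frac{1}{r(\epsilon)}\log\PP\{y^\epsilon\in C\}\le -a_0 = -\Lambda(\overline C)$.

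The main obstacle is the bookkeeping in the upper bound: one must carefully arrange the finite cover and the compact sets $K$, $K'$ so that Theorem~\ref{thm:LD1} applies uniformly (its hypotheses require both $x\in K$ and $g([0,t])\subset K$ for a \emph{single} compact set), and one must handle the possibility of explosion of either $y^\epsilon$ or the skeleton $S_x(f)$ — paths escaping $\ustar$. This is precisely why the theorem is phrased with $d_t$ (which is $+\infty$ once a path hits $\Delta$) and why one localizes to compacts $K\Subset\ustar$; the contribution of paths leaving $K$ before time $t$ is pushed into the "bad event" or the Brownian large-energy event and absorbed. All of this is routine given Theorem~\ref{thm:LD1}, the properties (p1)--(p3), the compactness of the level sets \eqref{eqn:Kcrosslambda}, and lower semicontinuity of $\lambda$, so I would cite Azencott~\cite{Az_80} and Baldi~\cite{Bal_88} for the structure and only flag the open-set localization as the one genuinely new (but minor) point, mirroring the remark already made before Theorem~\ref{thm:LD1}.
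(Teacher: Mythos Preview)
Your proposal is correct and follows essentially the same route as the paper: the paper does not give an independent proof but simply refers to Azencott~\cite[proof of 2.13 Th\'{e}or\`{e}me, Chapitre III]{Az_80}, noting only that $\epsilon^{-2}$ is replaced by $r(\epsilon)$ and that local Lipschitz continuity of $\sigma$ on $\ustar$ suffices in place of $C^1$. What you have written is precisely a sketch of that argument --- the contraction-with-approximation scheme driven by Theorem~\ref{thm:LD1} and Schilder's LDP, localized to compacts in $\ustar$ --- so you are reconstructing the cited proof rather than deviating from it.
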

  
  For the proof of Theorem \ref{thm:LD2}, we refer to \cite[proof of 2.13 Th\'{e}or\`{e}me, Chapitre III]{Az_80}
with $\epsilon^{-2}$ replaced by $r(\epsilon)$, which works in our setting as one merely needs $\sigma$ to be locally Lipschitz on $\ustar$ 
  rather than $C^1$.

 Given the previous two results, we are now prepared to prove Theorem~\ref{thm:main}.

  \section{Proof of Theorem~\ref{thm:main}} 
  \label{sec:proof}

The proof of Theorem~\ref{thm:main} is similar to the proof of the main result in Baldi~\cite{Bal_86} and also Caramellino~\cite{Car_98}, but with a different topology and and set of assumptions.  

We first need some auxiliary results.  

\begin{Proposition}
\label{prop:1}
Suppose that Assumption~\ref{assump:1} is satisfied and $x\in \ustar$, $t_*$, and $\K_x(t_*)$ are as in the statement of Theorem~\ref{thm:main}.  Then the following assertions hold. 
\begin{itemize}
\item[(i)]  For any $c\in (0,1)$:
\begin{align*}
\PP\bigg\{\lim_{j \rightarrow \infty} d_{t_*}(y^{c^j}, \K_x(t_*) ) = 0\bigg\}=1. 
\end{align*}
\item[(ii)]  If Assumption~\ref{assump:2} (i) is furthermore satisfied, then 
\begin{align*}
\PP\bigg\{\lim_{\epsilon\rightarrow 0} d_{t_*}(y^{\epsilon}, \K_x(t_*) ) = 0\bigg\}=1. 
\end{align*}\end{itemize}
\end{Proposition}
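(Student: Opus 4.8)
The plan is to establish (i) from the large deviations upper bound of Theorem~\ref{thm:LD2} via a Borel--Cantelli argument along the geometric sequence $\epsilon = c^j$, and then to bootstrap from (i) to (ii) using Assumption~\ref{assump:2}(i) together with the triangle inequality \eqref{eqn:trin} for $d_{t_*}$. Throughout I would use that $d_{t_*}$ depends only on the restriction of a path to $[0, t_*]$, so that $d_{t_*}(g, \K_x(t_*))$ is unambiguous whether we view $g$ as an element of $\EE_x([0,1]; \ustar)$ or of $\EE_x([0, t_*]; \ustar)$, and I would work with the former since that is the setting of Theorem~\ref{thm:LD2}.

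For part (i): fix $\delta > 0$ and set $A_\delta = \{ g \in \EE_x([0,1]; \ustar) : d_{t_*}(g, \K_x(t_*)) \geq \delta \}$. First I would verify that $A_\delta$ is closed in the Azencott topology. If $g_n \to g$ with $g \notin A_\delta$, then there is $h \in \K_x(t_*)$ with $d_{t_*}(g, h) < \delta$, which forces $g$ not to reach $\Delta$ on $[0, t_*]$; since $[0, t_*]$ is then a compact subset of $[0, \tau_1(g)) \cap [0,1]$, the convergence $g_n \to g$ is uniform on $[0, t_*]$, whence $d_{t_*}(g_n, h) \leq d_{t_*}(g_n, g) + d_{t_*}(g, h) < \delta$ for $n$ large, i.e. $g_n \notin A_\delta$ eventually. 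The crucial step is then the strict bound $\Lambda(A_\delta) > 1$: if $\Lambda(A_\delta) \leq 1$, take $g_n \in A_\delta$ with $\lambda(g_n) \to \Lambda(A_\delta)$; eventually $\lambda(g_n) \leq 2$, so compactness of $\{ g : g_0 = x, \, \lambda(g) \leq 2\}$ from \eqref{eqn:Kcrosslambda} yields a subsequential limit $g_{n_k} \to g^*$, lower semicontinuity of the Cram\'er transform $\lambda$ gives $\lambda(g^*) \leq 1$, hence $g^* \in \K_x$, and by the defining property \eqref{eqn:tdef} of $t_*$ we have $g^*([0, t_*]) \subset \inter(L)$, so $g^*$ does not explode on $[0, t_*]$ and $g_{n_k} \to g^*$ uniformly there; thus $d_{t_*}(g_{n_k}, \K_x(t_*)) \leq d_{t_*}(g_{n_k}, g^*) \to 0$, contradicting $g_{n_k} \in A_\delta$. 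Then Theorem~\ref{thm:LD2} applied to the closed set $A_\delta$, together with a choice $a' \in (1, \Lambda(A_\delta))$, gives $\epsilon_1 > 0$ with $\PP\{ y^\epsilon \in A_\delta\} \leq e^{-a' r(\epsilon)}$ for $\epsilon \leq \epsilon_1$. Since $r(c^j) = \log j + \log\log c^{-1}$, this is bounded by $C_c\, j^{-a'}$ for $j$ large, which is summable because $a' > 1$; Borel--Cantelli then gives, $\PP$-a.s., $d_{t_*}(y^{c^j}, \K_x(t_*)) < \delta$ for all large $j$. Finally I would intersect these full-measure events over $\delta = 1/n$, $n \in \N$, to conclude $d_{t_*}(y^{c^j}, \K_x(t_*)) \to 0$ a.s., which is (i).

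For part (ii): fix $\delta > 0$, let $c_0 \in (0,1)$ be the constant supplied by Assumption~\ref{assump:2}(i) for the value $\delta/2$, and fix any $c \in (c_0, 1)$. By part (i), $\PP$-a.s. there is $J_1(\omega)$ with $d_{t_*}(y^{c^j}, \K_x(t_*)) < \delta/2$ for $j \geq J_1$; by Assumption~\ref{assump:2}(i), $\PP$-a.s. there is $J_2(\omega, c)$ with $d_{t_*}(y^{c^j}, y^\epsilon) < \delta/2$ whenever $j \geq J_2$ and $\epsilon \in [c^{j+1}, c^j]$. For every $\epsilon < c^{\max(J_1, J_2)}$, choosing $j \geq \max(J_1, J_2)$ with $\epsilon \in [c^{j+1}, c^j]$ and applying \eqref{eqn:trin} gives $d_{t_*}(y^\epsilon, \K_x(t_*)) \leq d_{t_*}(y^\epsilon, y^{c^j}) + d_{t_*}(y^{c^j}, \K_x(t_*)) < \delta$. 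Hence $\limsup_{\epsilon \to 0} d_{t_*}(y^\epsilon, \K_x(t_*)) \leq \delta$ a.s., and intersecting over $\delta = 1/n$ yields the claim.

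The step I expect to be the main obstacle is the estimate $\Lambda(A_\delta) > 1$ in part (i), together with checking that $A_\delta$ is genuinely closed in the non-metrizable explosive-path topology. Both rely essentially on the structural facts assembled before the statement: the compactness of the sublevel sets in \eqref{eqn:Kcrosslambda}, the lower semicontinuity of $\lambda$, and the choice of $t_*$ in \eqref{eqn:tdef}, which guarantees that every path in $\K_x$ stays in $\inter(L)$ and therefore does not feel the compactification on $[0, t_*]$; it is precisely this last point that lets one pass from $\EE$-convergence of the minimizing sequence to uniform convergence on $[0, t_*]$ and extract the contradiction.
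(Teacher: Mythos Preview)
Your proposal is correct and follows essentially the same route as the paper: define the $\delta$-complement set, show it is closed in the explosive-path topology, use compactness of sublevel sets of $\lambda$ together with lower semicontinuity to get $\Lambda(A_\delta)>1$, apply the large deviations upper bound of Theorem~\ref{thm:LD2} along $\epsilon=c^j$ and Borel--Cantelli for (i), then combine (i) with Assumption~\ref{assump:2}(i) via the triangle inequality for (ii). The only cosmetic differences are that the paper argues closedness directly (taking $g_n\in A_\delta$, $g_n\to g$) rather than via openness of the complement, and it justifies sequential compactness of $\{g:\lambda(g)\le 2\}$ by writing it as $S_x(\C_2)$ rather than citing \eqref{eqn:Kcrosslambda}; both variants are equivalent.
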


\begin{proof}
To show (i),  fix $\delta >0$ and consider the set 
\begin{align}
\K_{x,\delta} = \{ g \in \EE_x([0,1]; \ustar) \, : \, d_{t_*}(g, \K_x(t_*)) \geq \delta\}.  
\end{align}
First we claim that $\K_{x,\delta} $ is closed in $\EE([0,1]; \ustar)$.  Indeed, if $g_n\in \K_{x, \delta}$ converges $g_n\rightarrow g \in \EE([0,1]; \ustar)$ as $n\rightarrow \infty$ and $g(t_*)=\Delta$, then $d_{t_*}(g, \K_x(t_*))=\infty$ 
since, by the definition of $t^*$, $h(t^*) \neq \Delta$ for any $h \in \K_x(t_*)$. 
Hence, $g\in \K_{x,\delta}$.  
If on the other hand $g(t_*)\in \ustar$, then by the continuity of $g$, $\tau_1(g) > t_*$ and by the definition of convergence in $\EE([0,1]; \ustar)$ one has that 
$\tau_1(g_n) > t_*$ for any sufficiently large $n$.  Then, the triangle inequality \eqref{eqn:trin} and the definition of 
 $d_{t_*}$ imply for any $h\in \K_{x}(t_*)$
\begin{equation}
d_{t_*}(g_n, g)+ d_{t_*}(g, h) \geq d_{t_*}(g_n, h)\geq \delta.  
\end{equation}
Passing $n\rightarrow \infty$,  we obtain $g \in \K_{x,\delta}$.

We now claim that there exists $\delta'>0$ for which $\Lambda(\K_{x, \delta}) > 1+ \delta'$.  Suppose to the contrary that 
$\Lambda(\K_{x, \delta}) \leq 1$.  By definition, there exists a sequence $g_n \in  \K_{x, \delta}$ such that 
\begin{align*}
\lim_{n\rightarrow \infty} \lambda(g_n) = \Lambda(\K_{x,\delta})\leq 1.  
\end{align*}
Thus, for all $n$ large enough, $g_n\in \mathcal{M}:=\{ g\in \EE_x([0,1]; \ustar)\,: \, \lambda(g) \leq 2\}$.  The set $\mathcal{M}$ is sequentially compact since   
$\mathcal{M}$ is the image of the sequentially compact set (the compact Sobolev embedding $H^1 \hookrightarrow \mathcal{C}$) 
\begin{align*}
\C_2= \{ f \in \C([0,1]; \RR^k) \, : \,\, \tfrac{1}{2}\textstyle{\int_0^1} |\dot{f}_s|^2 \, ds \leq 2\}
\end{align*}   
under the continuous mapping $S_x: \C_2 \rightarrow \EE_x([0,1]; \ustar)$ given by $S_x(f)$.   Hence, the sequence $\{g_n \}$ has a convergence subsequence $\{ g_{n_k} \}$ converging to some 
$g \in \K_{x, \delta}$.  
The lower semicontinuity of $\lambda$ then implies
\begin{align*}
1\geq \liminf_{k\rightarrow \infty} \lambda( g_{n_k}) \geq \lambda(g).  
\end{align*}
In particular,  $g\in \K_x$, contradicting closedness and the definition of $\K_{x, \delta}$.  Thus we have shown that there is $\delta'>0$ so that $\Lambda(\K_{x, \delta}) > 1+ \delta'$.  

By Theorem~\ref{thm:LD2} and the fact that $\K_{x, \delta}$ is closed
\begin{align*}
\limsup_{\epsilon\rightarrow 0} \frac{1}{r(\epsilon)} \log \PP \{ y^\epsilon \in \K_{x, \delta} \} \leq -(1+\delta'). 
\end{align*} 
Hence, using the definition of $r(\epsilon)$ we have for all $j$ large and $c\in (0,1)$
\begin{align}\label{eqn:lbq}
\PP \{ y^{c^j} \in \K_{x, \delta} \} \leq \frac{C}{j^{1+\tfrac{\delta'}{2}}}
\end{align}  
for some constant $C = C(c) >0$.  The Borel-Cantelli lemma then implies   
\begin{align*}
\PP \bigg\{ \limsup_{j\rightarrow \infty}\,  d_{t_*}(y^{c^j}, \K_x(t_*)) \geq \delta \bigg\} =0.  
\end{align*}
Since $\delta >0$ was arbitrary, $\lim_{j\to \infty} d_{t_*}(y^{c^j}, \K_x(t_*)) =0$ almost surely, finishing the proof of part (i).  

To establish part (ii), fix $\delta >0$ and by Assumption~\ref{assump:2}(i) choose a constant $c\in (0,1)$ and a (random) index $J_0=J_0(\omega, c)>0$ such that for all $j\geq J_0$ and $\epsilon \in [c^{j+1}, c^j]$ 
\begin{align}\label{eqn:lbp}
y_{t_*}^\epsilon \in \ustar\qquad \text{ and } \qquad 
d_{t_*}(y^{c^j}, y^\epsilon) < \frac{\delta}{2}.
\end{align} 
By \eqref{eqn:lbq}, we can increase $J_0$ if necessary so that $j\geq J_0$ implies 
\begin{align*}
d_{t_*}(y^{c^j}, \K_x(t_*) ) < \frac{\delta}{2}  . 
\end{align*}
Using \eqref{eqn:lbp}, the triangle inequality \eqref{eqn:trin}, for any $j\geq J_0$ and $\epsilon \in [c^{j+1}, c^j]$ one has
\begin{align*}
d_{t_*}(y^\epsilon, \K_x(t_*))< \delta\,,
\end{align*}
and part (ii) follows.  
  \end{proof}

\begin{Proposition}
\label{prop:2}
Suppose that Assumption~\ref{assump:1} is satisfied.  Let $g\in \K_x$ be such that $\lambda(g) <1$.  Then, for all $\epsilon >0$ and $c\in (0,1)$ we have 
\begin{align*}
\PP\{ d_{t_*}(y^{c^j}, g) < \epsilon \,\text{ for infinitely many }j \} =1.  
\end{align*}
\end{Proposition}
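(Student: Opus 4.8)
The plan is to run the classical second Borel--Cantelli argument along a sparse subsequence of $\{c^j\}_j$, transferring a lower bound for the ``Brownian'' event that $\tfrac1{\sqrt{r(\epsilon)}}B^\epsilon$ lies $\alpha$-close to a near-optimal control for $g$ to the event $\{d_{t_*}(y^\epsilon,g)<\epsilon\}$ by means of Theorem~\ref{thm:LD1}. Throughout I would use the representation $B^\epsilon_t=\epsilon^{-1/2}B_{\epsilon t}$ for a fixed Brownian motion $B$, as in the constructions of Section~\ref{sec:setup} (cf.~\eqref{eqn:scaling1}, \eqref{eqn:coc} and Corollary~\ref{cor:2}); this is the only place where the joint law of $\{B^\epsilon\}_\epsilon$ enters, and it guarantees that $y^{c^j}$ restricted to $[0,t_*]$ is a functional of $B$ on $[0,c^jt_*]$.

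First I would set $\eta:=\tfrac12(1-\lambda(g))>0$ and fix, using that the infimum in~\eqref{def:lambda} is attained, a control $f\in\C^0([0,1];\RR^k)$ with $S_x(f)=g$ and $\tfrac12\int_0^1|\dot f_s|^2\,ds=\lambda(g)$. By~\eqref{eqn:tdef} I may pick a compact $L\subset\ustar$ with $g([0,t_*])\subset\inter(L)$, and then apply Theorem~\ref{thm:LD1} with $K=L$, $t=t_*$, $\rho=\epsilon/2$, $R=1$, $a=2$ to get $\epsilon_0>0$ and $\alpha>0$ with
\begin{equation*}
\PP\Bigl\{d_{t_*}\bigl(\tfrac1{\sqrt{r(\epsilon')}}B^{\epsilon'},f\bigr)\le\alpha\ \text{and}\ d_{t_*}(y^{\epsilon'},g)>\tfrac{\epsilon}{2}\Bigr\}\le e^{-r(\epsilon')},\qquad 0<\epsilon'\le\epsilon_0.
\end{equation*}

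Next I would choose the subsequence and truncation scale. Fix $\beta\in(0,\eta/(1-\eta))$, so that $(1+\beta)(1-\eta)<1$, put $j_n:=\lceil n^{1+\beta}\rceil$ and $\theta_n:=\max\{n^{-1},\,t_*c^{j_{n+1}-j_n}\}$; then $j_{n+1}-j_n\to\infty$, $\theta_n\downarrow0$, and $c^{j_{n+1}}t_*\le c^{j_n}\theta_n$, so the windows $I_n:=[c^{j_n}\theta_n,\,c^{j_n}t_*]$ are pairwise disjoint for large $n$. Writing $r_n:=r(c^{j_n})=\log(j_n\log c^{-1})$, I would split the Brownian event at time $\theta_n$,
\begin{align*}
E_n^{\mathrm{late}}&:=\Bigl\{\textstyle\sup_{\theta_n\le s\le t_*}\bigl|\tfrac1{\sqrt{r_n}}\bigl(B^{c^{j_n}}_s-B^{c^{j_n}}_{\theta_n}\bigr)-(f_s-f_{\theta_n})\bigr|\le\tfrac\alpha2\Bigr\},\\
E_n^{\mathrm{early}}&:=\Bigl\{\textstyle\sup_{0\le s\le\theta_n}\bigl|\tfrac1{\sqrt{r_n}}B^{c^{j_n}}_s-f_s\bigr|\le\tfrac\alpha2\Bigr\},
\end{align*}
so that by the triangle inequality $E_n^{\mathrm{late}}\cap E_n^{\mathrm{early}}\subset\{d_{t_*}(\tfrac1{\sqrt{r_n}}B^{c^{j_n}},f)\le\alpha\}$, while $E_n^{\mathrm{late}}$ is measurable with respect to the increments of $B$ over $I_n$, hence $\{E_n^{\mathrm{late}}\}_{n\ge N_0}$ is an independent family. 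I would then estimate: the Cameron--Martin shift inequality applied to the symmetric convex ball $\{h:\sup|h|\le\tfrac\alpha2\sqrt{r_n}\}$, together with $\tfrac12\int_{\theta_n}^{t_*}|\dot f|^2\le\lambda(g)=1-2\eta$, gives $\PP(E_n^{\mathrm{late}})\ge\tfrac12 e^{-(1-2\eta)r_n}\ge e^{-(1-\eta)r_n}$ for large $n$, whence $\sum_n\PP(E_n^{\mathrm{late}})\gtrsim\sum_n n^{-(1+\beta)(1-\eta)}=\infty$; while $\sup_{[0,\theta_n]}|f|\le\sqrt{\theta_n}\|\dot f\|_{L^2}\to0$ and a Gaussian tail bound for $\sup_{[0,\theta_n]}|B^{c^{j_n}}|\overset{d}{=}\sqrt{\theta_n}\sup_{[0,1]}|\widetilde B|$ give $\PP((E_n^{\mathrm{early}})^c)\le 4\exp(-c_1 r_n/\theta_n)$, which is summable since $r_n/\theta_n\ge n\log(j_n\log c^{-1})\to\infty$. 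By the two Borel--Cantelli lemmas, almost surely $E_n^{\mathrm{late}}$ holds for infinitely many $n$ and $E_n^{\mathrm{early}}$ for all large $n$, so $d_{t_*}(\tfrac1{\sqrt{r_n}}B^{c^{j_n}},f)\le\alpha$ infinitely often, a.s. Since moreover $\sum_n e^{-r_n}\asymp\sum_n n^{-(1+\beta)}<\infty$, the estimate from Theorem~\ref{thm:LD1} and Borel--Cantelli force, a.s. and for all large $n$ with $d_{t_*}(\tfrac1{\sqrt{r_n}}B^{c^{j_n}},f)\le\alpha$, that $d_{t_*}(y^{c^{j_n}},g)\le\tfrac\epsilon2<\epsilon$. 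Combining the last two statements gives $d_{t_*}(y^{c^{j_n}},g)<\epsilon$ for infinitely many $n$, hence $d_{t_*}(y^{c^j},g)<\epsilon$ for infinitely many $j$, $\PP$-a.s.

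The hard part is the simultaneous calibration of $(j_n,\theta_n)$: divergence of $\sum_n\PP(E_n^{\mathrm{late}})$ forces $j_n$ to grow slowly, with exponent tied to $1-\lambda(g)$; independence of the $E_n^{\mathrm{late}}$ forces $j_{n+1}-j_n\to\infty$ and thereby permits $\theta_n\downarrow0$; and $\theta_n\downarrow0$ is exactly what makes the early Brownian increment negligible and the truncated control $f|_{[\theta_n,t_*]}$ asymptotically optimal, so that $\tfrac12\int_{\theta_n}^{t_*}|\dot f|^2$ eventually stays below $1-\eta$. I would also expect some care in getting the lower bound on $\PP(E_n^{\mathrm{late}})$ uniformly in $n$; using the Cameron--Martin shift inequality here (rather than a pathwise Gronwall comparison between $y^{c^{j_n}}$ and a truncated process, which would require uniform bounds on the Lipschitz constants of $b_\epsilon$ not supplied by Assumption~\ref{assump:1}) is what keeps the argument clean.
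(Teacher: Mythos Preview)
Your proposal is correct but takes a different, more self-contained route than the paper.  The paper's proof is very short: after fixing a near-optimal control $f$ with $S_x(f)=g$ and defining the events $F_j=\{d_{t_*}(\tfrac{1}{\sqrt{r(c^j)}}B^{c^j},f)<a_*\}$ and $H_j=\{d_{t_*}(y^{c^j},g)<\epsilon\}$, it applies Theorem~\ref{thm:LD1} with $R=2$ and the first Borel--Cantelli lemma to get $\PP\{F_j\cap H_j^c\text{ i.o.}\}=0$, and then simply \emph{cites} Mueller~\cite{Mue_81} and Gantert~\cite{Gan_93} (the small-time functional LIL for Brownian motion) to conclude $\PP\{F_j\text{ i.o.}\}=1$; the inclusion $\{F_j\text{ i.o.}\}\subset\{F_j\cap H_j\text{ i.o.}\}\cup\{F_j\cap H_j^c\text{ i.o.}\}$ finishes the proof in one line.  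What you do instead is essentially reprove the Mueller--Gantert input from scratch: the sparse subsequence $j_n=\lceil n^{1+\beta}\rceil$, the time-splitting at $\theta_n$ to manufacture independent blocks, the Cameron--Martin shift lower bound on the symmetric ball, and the second Borel--Cantelli lemma together constitute the standard proof of that Brownian result.  Your approach is self-contained and makes transparent exactly where the strict inequality $\lambda(g)<1$ enters (through the calibration $(1+\beta)(1-\eta)<1$); the paper's approach is shorter and modular, delegating the Brownian LIL entirely to the literature.  One minor point: your claim $r_n/\theta_n\ge n\log(j_n\log c^{-1})$ relies on $\theta_n=n^{-1}$, which is only eventually true since $t_*c^{j_{n+1}-j_n}$ decays super-polynomially; this is fine but worth saying explicitly.
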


\begin{proof} 
In the proof, we abbreviate \emph{for infinitely many} $j$ as \emph{i.o.} $j$.   
Fix $g\in \K_x$ with $a:= \lambda(g)<1$ and fix $\epsilon >0$ and $c\in (0,1)$.  Since the infimum in the definition of $\lambda$ (see \eqref{def:lambda})
is attained, there exists $f\in \C_a([0, 1]; \RR^k)$ so that $g=S_x(f)$.  By shifting $f$ by a constant value, we may assume without loss of generality that $f_0=0$, as the time derivative is invariant under this shift.  For $a_*>0$ and $c\in (0,1)$ define events 
\begin{align*}
F_j = \left\{ d_{t_*}\left(\tfrac{1}{\sqrt{r(c^j)}} B^{c^j}, f \right) < a_* \right\} \qquad \text{ and }\qquad  H_j= \{ d_{t_*}(y^{c^j} , g ) < \epsilon\}.  
\end{align*}  
Then, Theorem~\ref{thm:LD1} implies that there exists $a_*>0$ and $J>0$ such that $j\geq J$ implies 
\begin{align*}
\PP\{ F_j \cap H_j^c\} \leq \exp( -2 r(c^j))\leq \frac{C}{j^2}
\end{align*}
for some constant $C>0$.  The Borel-Cantelli lemma then implies  $\PP\{ F_j \cap H_j^c \,\text{ i.o.} j\}=0$.  Now, by Mueller~\cite{Mue_81} or Gantert~\cite{Gan_93} we have $1= \PP\{ F_j \, \text{ i.o. } j \}$.  Thus, 
\begin{align*}
1= \PP\{ F_j \, \text{ i.o. }j \} \leq \PP \{ F_j \cap H_j \,\text{ i.o. }j \} + \PP \{ F_j \cap H_j^c \, \text{ i.o. }j \} \leq \PP\{ H_j \, \text{ i.o. }j \} \,,
\end{align*}
as desired. 
\end{proof}

We will also need the following topological result.  
\begin{Lemma}
\label{lem:comp}
Let $t\in (0,1]$, $V\subset \RR^d$ be open, $x\in V$ and suppose $K\subset \C_x([0, t]; V)$.  Then, $K$ is compact in $\C_x([0, t]; V)$ if and only if $K$ is compact in $\EE_x([0, t]; V)$.  
\end{Lemma}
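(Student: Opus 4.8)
The plan is to prove Lemma~\ref{lem:comp} by first showing that, for paths that never reach the death state $\Delta$, convergence in $\EE([0,t];V)$ and uniform convergence are the same, and then combining this with the sequential compactness available in each space. The first thing I would record is the set-theoretic inclusion $\C_x([0,t];V)\subseteq\EE_x([0,t];V)$: a continuous map $g:[0,t]\to V$ with $g_0=x$ is an explosive trajectory that avoids $\Delta$, hence lies in $\EE_x([0,t];V)$ and satisfies $\tau_t(g)=\infty$. The key observation is then the following equivalence for $g_n,g\in\C_x([0,t];V)$: one has $g_n\to g$ uniformly on $[0,t]$ if and only if $g_n\to g$ in $\EE([0,t];V)$. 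The forward implication is immediate, since uniform convergence on $[0,t]$ restricts to uniform convergence on every $[0,s]$ with $s\le t$. For the converse, because $\tau_t(g)=\infty$ the value $s=t$ is admissible in the definition of convergence in $\EE([0,t];V)$, so there is $N$ with $g_n\in\C([0,t];V)$ and $g_n\to g$ in $\C([0,t];V)$ for all $n\ge N$; together with $g_n(0)=x$ this is exactly convergence in $\C_x([0,t];V)$. Since $\C_x([0,t];V)$ carries the (metrizable) topology of uniform convergence, this equivalence shows in particular that the inclusion map $\iota:\C_x([0,t];V)\hookrightarrow\EE_x([0,t];V)$ is continuous.

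Given this, the implication ``$K$ compact in $\C_x([0,t];V)$ $\Rightarrow$ $K$ compact in $\EE_x([0,t];V)$'' is immediate, since $\iota(K)=K$ is the continuous image of a compact set. For the converse, suppose $K\subseteq\C_x([0,t];V)$ is compact in $\EE_x([0,t];V)$. Recalling that $\EE_x([0,t];V)$ is metrizable (cf.~\cite{Az_80}), so that its compact subsets are sequentially compact, I would take an arbitrary sequence $(g_n)\subseteq K$ and extract a subsequence $g_{n_j}\to g$ in $\EE_x([0,t];V)$ with $g\in K$. Since $K\subseteq\C_x([0,t];V)$, both the $g_{n_j}$ and $g$ lie in $\C_x([0,t];V)$, so by the equivalence of the previous paragraph $g_{n_j}\to g$ uniformly on $[0,t]$, i.e. in $\C_x([0,t];V)$, with limit in $K$. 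Hence every sequence in $K$ has a subsequence converging in $\C_x([0,t];V)$ to a point of $K$; as $\C_x([0,t];V)$ is a metric space, this shows $K$ is compact in $\C_x([0,t];V)$.

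The only substantive point is the converse direction, where one must rule out that a sequence in $K$ ``explodes in the limit'', that is, converges in $\EE([0,t];V)$ to a trajectory that actually reaches $\Delta$ and therefore escapes $\C_x([0,t];V)$. This is precisely what the compactness (equivalently, sequential compactness) of $K$ in $\EE_x([0,t];V)$ prevents: it forces the $\EE$-limit of any convergent subsequence to lie in $K\subseteq\C_x([0,t];V)$, after which the equivalence between the two modes of convergence does the rest. Everything else is routine manipulation of the definition of convergence in the space of explosive trajectories.
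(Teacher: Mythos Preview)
Your proof is correct and rests on the same core observation as the paper's: for $g_n,g\in\C_x([0,t];V)$, convergence in $\EE_x([0,t];V)$ is equivalent to uniform convergence on $[0,t]$. The ``compact in $\C_x$ $\Rightarrow$ compact in $\EE_x$'' direction is handled identically (continuity of the inclusion). For the converse, however, the paper avoids invoking metrizability of $\EE_x([0,t];V)$: it argues directly at the level of open covers by showing that every $\C_x$-open set is already $\EE_x$-open (via the sequential characterization of closed sets in $\EE_x$), so any $\C_x$-open cover of $K$ is an $\EE_x$-open cover and admits a finite subcover. Your route through sequential compactness is cleaner to read but imports an external fact from~\cite{Az_80}; the paper's argument is self-contained and shows, in effect, that the subspace topology on $\C_x$ inherited from $\EE_x$ coincides with the uniform topology.
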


\begin{proof}
Let $K\subset \C_x([0, t]; V)$ be compact in $\EE_x([0, t]; V)$. To show that $K$ is compact in $\C_x([0, t]; V)$, 
it suffices to prove that if $\mathcal{V}$ is open in $\C_x([0, t]; V)$, then $\mathcal{V}$ is open in $\EE_x([0, t]; V)$.   
Equivalently, we  show that $\mathcal{W}=\EE_x([0, t]; V)\setminus \mathcal{V}$ is closed in $\EE_x([0, t]; V)$.  Let $g_n \in \mathcal{W}$ be such that $g_n\rightarrow g\in \EE_x([0,t]; V)$.  If $g(t) = \Delta$, then clearly $g\notin \mathcal{V} \subset \C_x([0, t]; V)$, and therefore $g\in \mathcal{W}$.  On the other hand, if $g(t) \in V$, then $g\in \C_x([0, t]; V)$ by definition of $\EE_x([0,t]; V)$. 
 By definition of the topology on $\EE_x([0, t]; V)$, $g_n \in \mathcal{W} \cap \C_x([0, t]; V)$ for all $n\geq N$, $N>0$ large enough.  Since $\mathcal{W}\cap \C_x([0, t]; V)$ is closed
 and $g_n \to g$ in the topology of  $\C_x([0, t]; V)$, then $g\in \mathcal{W}$.  
 Thus,  $\mathcal{W}$ is closed in $\EE_x([0, t]; V)$.

Conversely, suppose $K\subset \C_x([0, t]; V)$ is compact in $\C_x([0,t]; V)$. To prove that $K$ is compact in $\EE_x([0, t]; V)$, it is enough to show that if $\mathcal{V}$ is open in $\EE_x([0, t]; V)$, then 
$\mathcal{V} \cap \C_x([0, t]; V)$ is open in $\C_x([0, t]; V)$.  Note that this follows immediately from the fact that if $g_n \to g$ in   $\C_x([0, t]; V)$, then 
$g_n \to g$ in $\EE_x([0, t]; V)$. 
 \end{proof}         

Given the previous three results, we next prove the main general result, Theorem~\ref{thm:main}.

\begin{proof}[Proof of Theorem~\ref{thm:main}]
We have already established part (ii) in Proposition~\ref{prop:1}(ii).  To prove part (i), we need to show that $\Y(\omega)$ is relatively compact, almost surely.      
By Proposition~\ref{prop:1}(ii), for any $\delta >0$ there exists $\epsilon_0=\epsilon_0(\omega, \delta) \in (0, \epsilon_*]$ such that $d_{t_*}(y^\epsilon, \K_x(t_*)) \leq \delta$ for all $\epsilon \in (0,\epsilon_0]$.  
In particular, for any small $\delta > 0$, $y^\epsilon( t_*) \neq \Delta$ a.s. for any $\epsilon \in [0, \epsilon_0]$. Hence, $y^\epsilon \in \C_x([0,t_*]; \ustar)$ for any  $\epsilon \in (0, \epsilon_0]$,
and therefore  the closure of $\{ y^\epsilon\}_{0< \epsilon \leq \epsilon_0}$ in $\EE_x([0, t_*]; \ustar)$ is the same as the  closure in $\C_x([0,t_*]; \ustar)$.

We claim that the closure of $\{ y^\epsilon\}_{0< \epsilon \leq \epsilon_0}$ in $\EE_x([0, t_*]; \ustar)$ in $\C_x([0,t_*]; \ustar)$  is compact in  
$\C_x([0, t_*]; \ustar)$, almost surely.  Consequently,  the closure of $\{ y^\epsilon\}_{0< \epsilon \leq \epsilon_0}$ is compact in $\EE_x([0, t_*]; \ustar)$ almost surely by Lemma~\ref{lem:comp} finishing the proof of part (i) of the result. 
To prove the claim, fix a sequence $\{\epsilon_n\} \subset (0, \epsilon_0]$. By passing to a subsequence, we can suppose that $\epsilon_n \to \epsilon_\infty \in [0, \epsilon_0]$. If $\epsilon_\infty > 0$, then by Assumption \ref{assump:2}(ii), $y^{\epsilon_n} \to y^{\epsilon_\infty}$
in  $\EE_x([0, t_*]; \ustar)$, almost surely.  Note that this convergence happens in $\C_x([0, t_*]; \ustar)$ almost surely by the definition of $\epsilon_0$.  If $\epsilon_\infty = 0$, then 
by Proposition~\ref{prop:1}(ii), $y^{\epsilon_n} \to \K_x(t_*)$ almost surely in $\C_x([0, t_*]; \ustar)$. However, since $\K_x(t_*)$ is compact, there exists a subsequence of   $\{\epsilon_n\}$,  again denoted by 
$\{\epsilon_n\}$, such that $y^{\epsilon_n}$ converges almost surely to $g \in \K_x(t_*)$, and therefore $g$ belongs to the closure of $\{ y^\epsilon\}_{0< \epsilon < \epsilon_0}$. Our claim now follows.  

Next, for part (iii), Proposition~\ref{prop:1} implies that the a.s. limit set $\K'$ of $\Y$ as $\epsilon \rightarrow 0$ is contained in $\K_x(t_*)$. On the other hand, Proposition~\ref{prop:2}
yields that any point of $g \in \K_x(t_*)$ with $\lambda(g) < 1$ belongs to $\K'$ almost surely.  We therefore need to check that any $g\in \K_x(t_*)$ with $\lambda(g)=1$ belongs to $\K'$ as well.  Note that for every $g \in \K_x(t_*)$ the infimum in \eqref{def:lambda} is attained.  Thus there exists $f\in \C^0([0,1]; \RR^k)$ with 
\begin{align*}
\tfrac{1}{2}\int_0^1 |\dot{f}_t|^2 dt \leq 1 \,\,\, \text{ and } \,\,\,S_x(f) = g. 
\end{align*}
For each $n > 1$ define $f_n = (1 - 1/n)f$ and let $g_n = S_x(f_n)$. Clearly $\tfrac{1}{2}\int_0^1 |(\dot{f}_n)_t|^2 dt < 1$, and therefore $g_n \in \K'$
for each $n$.  By \eqref{eqn:cnt}, $S_x$ is continuous on the unit ball in $H^1$, and therefore $g \in \K'$ since limit sets are closed.    
\end{proof}

\section{Proof of Lemma~\ref{lem:1} and Corollary~\ref{cor:2}}
\label{sec:lem}

In this section, we prove Lemma~\ref{lem:1} and Corollary~\ref{cor:2}, which give basic criteria for the family $\{y^\epsilon\}_{0<\epsilon\leq \epsilon_*}$ to satisfy Assumption~\ref{assump:2}.  We begin with the:     

\begin{proof}[Proof of Lemma~\ref{lem:1}]
We first prove Assumption~\ref{assump:2}(i).  Recall the definition of $t_*=t_*(x)\in (0, 1]$ and $L \subset \ustar$ in~\eqref{eqn:tdef} 
and recall that $\K_x(t_*)$ is compact in both $\C_x([0, t_*]; \ustar)$ and $\EE_x([0,t_*]; \ustar)$ by Lemma~\ref{lem:comp}.  From~\eqref{eqn:tdef},  it  follows that
$g([0, t_*]) \subset L \subset \ustar$ for all $g\in \K_x(t_*)$.
Choose a compact set $L'\subset \ustar$ with $L\subset \inter (L')$ and fix $\delta >0$ such that $2\delta < \textrm{dist}(L', \partial \ustar)$. 
Using compactness of $\K_x(t_*)$ in $\C_x([0,t_*]; \ustar)$, the Arzel\` a-Ascoli theorem
implies that the set $\K_x(t_*)$ is equicontinuous, and therefore for any  
$c\in (0,1)$ sufficiently close to $1$ one has  
\begin{align}\label{eqn:eqi}
 \sup_{h\in \K_x(t_*)}    \sup_{\substack{t\in [0,t_*]\\s\in [c t, t]}}   | h(t) - h(s) |< \frac{\delta}{3}.
 \end{align}
By properties (i) and (ii) in the definition of an asymptotic index (Definition \ref{def:2}), we can decrease $\epsilon_0$ if necessary such that $|\psi(u)| < \kappa$ for any $u \in [0, \epsilon_0]$, where 
$\kappa$ is as in Definition \ref{def:1}(iii).  
Then, since $\{\Phi_\alpha\}$ is a family of weak contractions centered at $x$,  for all 
$c\in (0,1)$ close enough to $1$, there exists $J_1>0$ (deterministic) such that for all $j\geq J_1$ and all $\epsilon \in [c^{j+1}, c^j]$ one has $\ustar \subset U_{\psi(c^j)}$ and 
 \begin{align}\label{eqn:sem}
  | \Phi_{\psi(\epsilon)} \circ \Phi_{\psi(c^j)}^{-1}(y) - y| +2 | \Phi_{\psi(c^{j+1})}\circ \Phi_{\psi(c^j)}^{-1} (y)-y | < \frac{\delta}{3} 
  \end{align}
  for all $y\in L'$. Fix 
$c\in (0, 1)$ sufficiently close to 1 such that \eqref{eqn:eqi} and \eqref{eqn:sem} are satisfied.

By Proposition~\ref{prop:1}(i), for almost every $\omega$ we can choose a finite $J_0=J_0(\omega, c) \geq J_1$ such that $j\geq J_0$ implies $y^{c^j}_{s}\in L'$ for any $s \in [0, t_*]$.  
Also, from \eqref{eqn:trin} for all $j\geq J_0$ and $\epsilon \in [c^{j+1}, c^j]$ it follows that
\begin{align}\label{eqn:iin}
d_{t_*}(y^\epsilon, y^{c^j })&\leq  d_{t_*} \big(\Phi_{\psi(\epsilon)} \circ \Phi_{\psi(c^j)}^{-1}(y^{c^j}) , y^{c^j}) + d_{t_*} (\Phi_{\psi(\epsilon)} (x_{\epsilon t}),  \Phi_{\psi(\epsilon)} (x_{c^j t})).
\end{align} 
Let us prove that the second term on the right hand side is finite.  First, observe that  $$\Phi_{\psi(\epsilon)} (x_{c^j t}) =  \Phi_{\psi(\epsilon)} \circ \Phi_{\psi(c^j)}^{-1}(y^{c^j}_t).$$  Thus, since $y^{c^j}_{s}\in L'$ for any $s \in [0, t_*]$, one has by~\eqref{eqn:sem} 
\begin{align}
\label{eqn:rem}
d_{t_*}(\Phi_{\psi(\epsilon)} \circ \Phi_{\psi(c^j)}^{-1}(y^{c^j}_\cdot), y^{c^j}_\cdot)< \frac{\delta}{3}.  
\end{align}
In particular, since $2\delta < \text{dist}(L', \partial U^*)$, $\Phi_{\psi(\epsilon)} (x_{c^j t}) \in \ustar$ for any $t \in [0, t^*]$.  

Next, since $\psi$ is an asymptotic index and $\{ \Phi_\alpha\}_{\alpha >0}$ is a family of weak contractions centered at $x$, one has for any $t \leq t^*$, $t < \tau_{t_*}(y^\epsilon)$
\begin{align*}
 |\Phi_{\psi(\epsilon)} (x_{\epsilon t}) - \Phi_{\psi(\epsilon)} (x_{c^j t}) |& \leq  |\Phi_{\psi(c^{j+1})} (x_{\epsilon t}) - \Phi_{\psi(c^{j+1})} (x_{c^j t}) | \\
& \leq   \sup_{s\in [c t, t]}      |\Phi_{\psi(c^{j+1})} (x_{c^j s}) -\Phi_{\psi(c^{j+1})} (x_{c^j t}) |\\
&\leq  \sup_{s\in [c t, t]}    \bigg\{  |\Phi_{\psi(c^{j+1})} (x_{c^j s}) - y^{c^j}_s| + |y^{c^j}_t -\Phi_{\psi(c^{j+1})} (x_{c^j t}) |  + | y^{c^j}_t-y^{c^j}_s |\bigg\} \\
& \leq 2  d_{t_*}\big(\Phi_{\psi(c^{j+1})}\circ \Phi_{\psi(c^j)}^{-1} (y^{c^j}), y^{c^j} )  + \sup_{\substack{t\in [0,t_*]\\s\in [c t, t]}}  | y^{c^j}_t-y^{c^j}_s |. \end{align*}
Consequently, from \eqref{eqn:eqi} and \eqref{eqn:sem} 
for any $g\in \K_x(t_*)$ and $t\leq t_*$ it follows
\begin{align*}
|\Phi_{\psi(\epsilon)} (x_{\epsilon t}) - \Phi_{\psi(\epsilon)} (x_{c^j t}) |& \leq 2 d_{t_*} \big( \Phi_{\psi(c^{j+1})}\circ \Phi_{\psi(c_j)}^{-1} (y^{c^j}), y^{c^j} \big) + 2 d_{t_*} (y^{c^j},   g)\\
& \qquad + \sup_{h\in \K_x(t_*)}    \sup_{\substack{t\in [0,t_*]\\s\in [c t, t]}}   | h(t) - h(s) | \\
&\leq \frac{2\delta}{3} + 2 d_{t_*} (y^{c^j},   g).  
\end{align*}
Since the left hand side is independent of $g$, we can take the infimum with respect to $g \in \K_x(t_*)$ and obtain 
\begin{equation}
|\Phi_{\psi(\epsilon)} (x_{\epsilon t}) - \Phi_{\psi(\epsilon)} (x_{c^j t}) | \leq  \frac{2\delta}{3} +  2 d_{t_*} (y^{c^j} , \K_x(t_*)). 
\end{equation}  
Thus,   
  by Proposition~\ref{prop:1}, by increasing $J_2$ if needed, for any $j\geq J_2$ and $t\in [0, t_*]$ we have 
\begin{equation}\label{eqn:fbb}
|\Phi_{\psi(\epsilon)} (x_{\epsilon t}) - \Phi_{\psi(\epsilon)} (x_{c^j t}) | \leq \delta \,.
\end{equation}  
Consequently, since $2\delta < \text{dist}(L', \partial U^*)$,  employing~\eqref{eqn:rem} we obtain that $y^\epsilon_t \in U^*$ for all $t\in [0, t_*]$.   

Returning to \eqref{eqn:iin} and using \eqref{eqn:sem}, definition of $d_{t_*}$, and \eqref{eqn:fbb}, we obtain for any $j \geq J_2$ 
\begin{align}
d_{t_*}(y^\epsilon, y^{c^j })&\leq  d_{t_*} \big(\Phi_{\psi(\epsilon)} \circ \Phi_{\psi(c^j)}^{-1}(y^{c^j}) , y^{c^j}) + \sup_{t \in[0, t_*]} |\Phi_{\psi(\epsilon)} (x_{\epsilon t}) - \Phi_{\psi(\epsilon)} (x_{c^j t})| \leq \frac{4\delta}{3} 
\end{align}
and  Assumption~\ref{assump:2}(i) follows.

To prove Assumpion~\ref{assump:2}(ii), fix $\epsilon_\infty \in (0, \epsilon_*)$, let $\omega$ be a realization of the noise, and $t_1 \in [0, \tau_{t^*}(y^{\epsilon_\infty}(\omega)))$ with $t_1 \leq t_*$. 
The assertion follows once we prove that for any sequence $\epsilon_n$ with $\epsilon_n \to \epsilon_\infty$ as $n\rightarrow \infty$ one has  $y^{\epsilon_n}(\omega) \to y^{\epsilon_\infty}(\omega)$ as $n \to \infty$ in the space $\C_x([0, t_1]; \ustar)$ for almost all $\omega$. To simplify the notation, 
we will often drop the argument $\omega$ below. 
Since $t_1 < \tau_{t^*}(y^{\epsilon_\infty})$, then $\delta_0 := \inf_{t \in [0, t_1]} \textrm{dist}(y^{\epsilon_\infty}_t, \partial \ustar) > 0$. In particular $y^{\epsilon_\infty} ([0, t_1]) \subset K$ for some compact $K=K(\omega)$ with $K \subset \ustar$ almost surely. 
Fix $\delta \in (0, \delta_0]$ and  large enough $n$ such that $\epsilon_n > \epsilon_\infty/2$ and 
\begin{equation}\label{eqn:smrb}
|\Phi_{\psi(\epsilon_n)}\circ \Phi^{-1}_{\psi(\epsilon_\infty)}(y) - y| < \frac{\delta}{3}
\end{equation} 
for any $y \in K$. Note that such $n$ exists since $\epsilon_n \to \epsilon_\infty$, $\{\Phi_\alpha\}_{\alpha \succ 0}$ is a sequence of weak contractions and $\psi$ an asymptotic index. 

Then, from \eqref{eqn:smrb} and  the fact that $y\mapsto \Phi_{\psi(\epsilon_\infty/2)}(y) \in C^2(U)$, we obtain for any $t \leq t_1$ with $t <  \tau_{t^*}(y^{\epsilon_n})$
\begin{equation}\label{eqn:idnd}
\begin{aligned}
| y^{\epsilon_n}_t - y^{\epsilon_\infty}_t| &\leq |\Phi_{\psi(\epsilon_n)}(x_{\epsilon_n t}) - \Phi_{\psi(\epsilon_n)}(x_{\epsilon_\infty t})| + |\Phi_{\psi(\epsilon_n)}\circ \Phi^{-1}_{\psi(\epsilon_\infty)}(y^{\epsilon_\infty}_t) - y_t^{\epsilon_\infty}| \\
&\leq  |\Phi_{\psi(\epsilon_\infty/2)}(x_{\epsilon_n t}) - \Phi_{\psi(\epsilon_\infty/2)}(x_{\epsilon_\infty t})| + \frac{\delta}{3}.  
\end{aligned}
\end{equation}
Using the fact that 
\begin{equation}\label{eqn:xcnt}
s\mapsto x_s:[0, t]\rightarrow \EE_x([0, t]; \ustar)
\end{equation}
is continuous, $\PP$-almost surely, we obtain that for any sufficiently large $n$,   $\textrm{dist}(y^{\epsilon_n}_t, \partial \ustar) \geq \frac{\delta_0}{2}$ for any $t \leq t_1$,  $t <  \tau_{t^*}(y^{\epsilon_n})$.  
Again, a standard extension argument implies that $\textrm{dist}(y^{\epsilon_n}_t, \partial \ustar) \geq \frac{\delta_0}{2}$ for any $t \leq t_1$, and in particular $t_1< \tau_{t^*}(y^{\epsilon_n})$. Finally, passing $n \to \infty$
using \eqref{eqn:xcnt}, and since $\delta > 0$ was arbitrary, the assertion follows. 
\end{proof}

We now turn our attention to the proof of Corollary~\ref{cor:2}.    

\begin{proof}[Proof of Corollary~\ref{cor:2}]
Fix $c\in (0,1)$, $\delta >0$, and $\epsilon \in [c^{j+1}, c^j]$.  By Proposition~\ref{prop:1}(i) 
there is an almost surely finite random variable $J_0=J_0(c, \omega)$ such that 
\begin{align*}
y^{c^j}([0, t_*]) \subset \text{interior}(L) \,\,\text{ for all } \,\, j \geq J_0 \,,
\end{align*}
where $t_*$ and $L$ are as in \eqref{eqn:tdef} (see also Definition \ref{def:2}). 
Define $T_{\epsilon} = T_{\epsilon}(\omega) = \inf\{ t\geq 0 \, : \, y^\epsilon_t \notin \text{interior}(L) \} $ and for every $t \geq  0$ set $T_\epsilon(t)= t\wedge T_\epsilon$.  
Then, for any $j\geq J_0$ and $t\in [0,t_*]$ we obtain 
\begin{align}
\nonumber \sup_{s \leq T_\epsilon(t)} | y^\epsilon_s - y^{c^j}_s |  &\leq \int_0^{T_\epsilon(t)} | b_\epsilon(y^\epsilon_s) - b_{c^j}(y^{c^j}_s)| \, ds + \sup_{s\leq t_*} \bigg| \frac{\sigma_\epsilon}{ \sqrt{\epsilon r(\epsilon)}} \, B_{\epsilon s} - \frac{\sigma_{c^j}}{\sqrt{c^j r(c^j)}} \, B_{c^js} \bigg|\\
&=:S_1 + S_2(j, t_*).   \label{eqn:ses} 
\end{align}    
To estimate $S_1$, note that since both $y^\epsilon, y^{c^j}$ map $[0, T_\epsilon(t_*)]$ to 
the compact set $L \subset \ustar$ for $j\geq J_0$, we have by Assumption~\ref{assump:1}
\begin{align*}
S_1 &\leq  \int_0^{T_\epsilon(t)} |b_\epsilon(y^\epsilon_s)- b(y^\epsilon_s) |\, ds + \int_0^{T_\epsilon(t)}  |b_{c_j}(y^{c^j}_s)- b(y^{c^j}_s) | \, ds+ \int_0^{T_\epsilon(t)} |b(y^\epsilon_s) - b(y^{c^j}_s)| \, ds\\
& \leq 2 C_j(L) t_*  + C_L \int_0^{t} \sup_{v\leq T_\epsilon(s)} |y^\epsilon_v - y^{c^j}_v| \, ds  
\end{align*}
 where $t\leq t_*$, $C_j(L), C_L>0$ are deterministic constants and $C_j(L)\rightarrow 0$ as $j\rightarrow \infty$.  Combining the previous estimate with \eqref{eqn:ses} and using Gronwall's inequality gives 
 \begin{align*}
 \sup_{s \leq T_\epsilon(t_*)} | y^\epsilon_s - y^{c^j}_s |  &\leq ( 2 C_j(L) t_* + S_2(j, t_*)) e^{C_L t_*} 
  \end{align*} 
  for any $j\geq J_0$.

 In order to estimate $S_2(j, t_*)$, for any multindex $\alpha >0$ belonging to $\RR^k$, let $\Phi_\alpha:\RR^k\rightarrow \RR^k$ and $\psi:[0, \epsilon_*]\rightarrow \RR^k$ be given by 
 \begin{align*}
 \Phi_\alpha(y)= (y_1\alpha_1^{-1}, \ldots, y_k \alpha_k^{-1}) \qquad \text{and} \qquad \psi(\epsilon) =(\sqrt{\epsilon \log\log\epsilon^{-1}}, \ldots, \sqrt{\epsilon \log\log\epsilon^{-1}}) \,,
 \end{align*}     
 where $\epsilon_* \in (0, e^{-1})$.  By Example \ref{ex:1}, $\{ \Phi_\alpha\}_{\alpha \succ 0}$ is a family of weak contractions centered at $0$ in $\RR^k$ while by 
 Example \ref{ex:2}, for $\epsilon_*>0$ sufficiently small, $\psi:[0, \epsilon_*]\rightarrow [0, \infty)^k$ is an asymptotic index.  We can then estimate $S_2(j, t_*)$ as follows  
 \begin{align*}
 S_2(j, t_*) & \leq \sup_{t\leq t_*} \bigg| \sigma_\epsilon \big( \Phi_{\psi(\epsilon)}(B_{\epsilon t} )- \Phi_{\psi(c^j)}(B_{c^j t})\big) \bigg| + \sup_{t \leq t_*} |(\sigma_\epsilon - \sigma_{c^j}) \Phi_{\psi(c^j)} (B_{tc^j})|\\
 &\leq \|\sigma_\epsilon \| d_{t_*}( \Phi_{\psi(\epsilon)}(B_{\epsilon \cdot }), \Phi_{\psi(c^j)}(B_{c^j \cdot}))
+  D_{j} \sup_{t\leq t_*} \frac{|B_{tc^j}|}{\sqrt{c^j \log \log c^{-j}}} \,,
 \end{align*} 
 where $D_{j}$ is a  deterministic constant with $D_{j}\rightarrow 0$ as $j\rightarrow \infty$ and $\| \cdot \|$ denotes the matrix norm.    
Now, the assumptions of Lemma \ref{lem:1} are satisfied with $x_t = B_t$ solving \eqref{eqn:sdemain} with $\tilde{b} = 0$ and $\tilde{\sigma}$ being the $d\times d$ identity matrix.   
 For any $M>0$, by Lemma~\ref{lem:1} there is $c\in (0, 1)$ and $J_1=J_1(\omega,c, M)>0$ such that for any $j\geq J_1$ and $\epsilon \in [c^{j+1}, c^j]$ we obtain
  \begin{align*}
  d_{t_*}( \Phi_{\psi(\epsilon)}(B_{\epsilon \cdot }), \Phi_{\psi(c^j)}(B_{c^j \cdot})) \leq \frac{1}{M (\|\sigma\|+1)}.  
  \end{align*}
By the standard LIL for Brownian motion, for any $c\in (0,1)$, there exists $J_2=J_2(\omega, c)>0$ such that for any $j\geq J_2$ it follows that, almost surely,
 \begin{align*}
 \sup_{t\leq t_*} \frac{|B_{tc^j}|}{\sqrt{c^j \log \log c^{-j}}} \leq 2.  
 \end{align*}
Overall, for any $M>0$, there exists $\delta' > 0$ such that for any $c\in (1-\delta', 1)$ there is $J_3=J_3(\omega, c, M)$ such that for all $j\geq J_3$ one has
\begin{align*}
 \sup_{s \leq T_\epsilon(t_*)} | y^\epsilon_s - y^{c^j}_s |  &\leq ( 2 C_j(L) t_* + M^{-1} \|\sigma^\epsilon\|D_{j}) e^{C_L t_*}  < \delta 
\end{align*}
for all $\epsilon \in [c^{j+1}, c^j]$.  
By increasing $M$ and $J_3$ if necessary, Proposition \ref{prop:1} part (i) ensures $T_\epsilon > t_*$, and therefore $T_\epsilon (t_*) = t_*$, so that Assumption~\ref{assump:2}(i) is satisfied.    

In order to establish Assumption~\ref{assump:2} part (ii), fix $\epsilon_0 \in (0, \epsilon_*]$ and $\omega$ in a subset of $\Omega$ of full measure specified below and let 
$t < \tau_{t_*}(y_\cdot^{\epsilon_0}(\omega))$
or $t = t^*$ if $y_{t_*}^{\epsilon_0}(\omega) \neq \Delta$. 
Since $s\mapsto y_s^{\epsilon_0}(\omega) : [0,t]\rightarrow \ustar$ is continuous, there is a compact set $K\subset \ustar$ such that 
the image of $[0, t]$ under the map $s \mapsto y_s^{\epsilon_0}(\omega)$
is contained in $K$.  

Fix any $\epsilon \in (0, \epsilon_*]$ and set $S_\epsilon(\omega):=\inf\{ t\geq 0 \, : \, y^\epsilon_t \notin K \}$ and $S_\epsilon(\omega, s)=S_\epsilon(\omega) \wedge s$ for any $s > 0$.  
To simplify the notation, we drop the explicit dependence on $\omega$ and proceed as above to find that for any $s\leq t \leq  t_*$
\begin{align*}
\sup_{v\leq  S_\epsilon(s)} | y_v^\epsilon - y_v^{\epsilon_0}|  &\leq \int_0^{S_\epsilon(s)} |b_{\epsilon_0}(y_v^{\epsilon_0}) - b_{\epsilon_0}(y_v^\epsilon)| \, dv + 
\int_0^{S_\epsilon(s)} |b_{\epsilon_0}(y_v^\epsilon) - b_\epsilon(y_v^\epsilon)| \, dv\\
&\qquad + \sup_{v\leq t} \left| \frac{\sigma_{\epsilon_0}}{\sqrt{{\epsilon_0} r({\epsilon_0})}} B_{v{\epsilon_0}} - \frac{\sigma_\epsilon}{\sqrt{\epsilon r(\epsilon)}} B_{v\epsilon}\right| \\
& \leq C_{\epsilon, {\epsilon_0}}(K) t_* 
+ C_{\epsilon_0}(K) \int_0^{S_\epsilon(s)} \sup_{w\leq  S_\epsilon(v)} | y_w^\epsilon - y_w^{\epsilon_0}| \, dv\\
&\qquad +
 \sup_{v\leq t} \left| \frac{\sigma_{\epsilon_0}}{\sqrt{{\epsilon_0} r({\epsilon_0})}} B_{v{\epsilon_0}} - \frac{\sigma_\epsilon}{\sqrt{\epsilon r(\epsilon)}} B_{v\epsilon}\right|   \,,
 \end{align*}     
where $C_{\epsilon, {\epsilon_0}}(K)$, $C_{\epsilon_0}(K)$ are deterministic constants with $C_{\epsilon, \epsilon_0}(K)\rightarrow 0$ as $\epsilon \rightarrow \epsilon_0$.  Gronwall's inequality, $t_* \leq1$, and 
and the almost sure path continuity of Brownian motion
then imply
\begin{align*}
\sup_{v\leq  S_\epsilon(t)} | y_v^\epsilon - y_v^{\epsilon_0}|  &\leq C'_{\epsilon, {\epsilon_0}}(K)e^{C_{\epsilon_0}(K) t} 
\end{align*} 
for some $C'_{\epsilon, \epsilon_0}$ such that $C'_{\epsilon, \epsilon_0}\rightarrow 0$ as $\epsilon \rightarrow \epsilon_0$.  
By continuity, and $t < \tau_{t_*}(y_\cdot^{\epsilon_0}(\omega))$ we obtain for $\epsilon$ sufficiently close to $\epsilon_0$ that $S_{\epsilon}(t) = t$, and therefore
 \begin{align*}
\sup_{v\leq  t} | y_v^\epsilon - y_v^{\epsilon_0}|  &\leq C_{\epsilon, {\epsilon_0}}(K)e^{C_{\epsilon_0}(K) t} \,.
\end{align*} 
for any $\epsilon$ sufficiently close to $\epsilon_0$. Passing $\epsilon \to \epsilon_0$ and using the definition of convergence in $\EE_x([0,t_*]; \ustar)$, we obtain the desired result. 
\end{proof}

\section{Application: Criteria for Regular Points on boundary of a bounded domain in~$\RR^d$}\label{sec:regular}

Throughout this section, for simplicity we suppose $U=U^*=\RR^d$ and $V\subset \RR^d$ is a non-empty, open set.  We moreover suppose that  $\partial V:= \overline{V}\setminus V\subset \RR^d$
 is non-empty, where $\overline{V}$ denotes the closure of $V$ in $\RR^d$.  

For $x\in \partial V$, our goal is to use Theorem~\ref{thm:main} to deduce criteria for the diffusion $x_t$ solving~\eqref{eqn:sdemain} to be 
\emph{regular} at $x$.  Specifically, we say that $x\in \partial V$ is \emph{regular for $(x_t, V)$} if 
$$
\PP_x\{ \tau_{\overline{V}} >0 \} =0\,,
$$ where 
\begin{align}
\tau_{\overline{V}}= \inf\{ t> 0 \, : \, x_t \notin \overline{V} \} \,.
\end{align}       
We call $x$ \emph{irregular for} $(x_t, V)$ otherwise.    

\begin{Remark}
Note that $x\in \partial V$ irregular for $(x_t, V)$ means that $x_t$ spends, with positive probability, a positive  amount of time in $\overline{V}$ before exiting $\overline{V}$.  Because the event $\{ \tau_{\overline{V}} >0 \}$ belongs to the germ $\sigma$-field $\bigcap_{t>0} \mathcal{F}_t$, Blumenthal's $0$-$1$ law implies that this event either has probability $0$ or $1$.  Thus $x\in \partial V$ is irregular for $(x_t, V)$ if and only if $\PP_x\{ \tau_{\overline{V}} >0 \} =1$.       
\end{Remark}

In order to state the main result of this section we recall (cf. \eqref{eqn:det}) the deterministic system associated to~\eqref{eqn:sdeas}:
\begin{equation}\label{eqn:detagain}
\left\{
\begin{aligned}
\dot{g}_t &= b(g_t) + \sigma(g_t) \dot{f}_t \,, \\
g_0 &= x.
\end{aligned}
\right. 
\end{equation}
In this section, we view~\eqref{eqn:detagain} as a control problem, with controls $f$ belonging to the class (cf. \eqref{eqn:dca})
\begin{align}\label{eqn:dcoa}
\C_1= \{ f\in \C^0([0,1]; \RR^k) \, : \, \textstyle{\frac{1}{2}\int_0^1 |\dot{f}_s|^2 \,ds} \leq 1\}.  
\end{align}
Let $L$ be a compact set containing a neighborhood of $x\in \partial V$ and let $t_*>0$ be as in~\eqref{eqn:tdef}.  For any $t\in [0,t_*]$ and any $f\in \C_1$, let $S_x^t(f)\in L$ denote the solution of~\eqref{eqn:detagain} at time $t$.  For $t\in [0,t_*]$, let  
\begin{align}
\A(x,t) = \{ y\in U \, : \, S_x^t(f)=y \text{ for some }  f\in \C_1 \} \,,
\end{align}  
where $\C_1$ is as in~\eqref{eqn:dcoa}
and 
\begin{align}
\A(x, \leq t_*) = \bigcup_{t\in [0, t_*]} \A(x, t).  
\end{align}

Consider the processes $x_t$ and $y_t^\epsilon$ defined by~\eqref{eqn:sdemain} and~\eqref{eqn:sdeas}, respectively, both having initial condition $x\in \partial V$.  

\begin{Definition}
We say that a point $z\in \overline{V}^c$ is \emph{asymptotically invariant at} $x$ if there exists $\delta >0$ such that the following statement holds almost surely:
whenever  $y_t^\epsilon \in B_\delta(z)$ ($y^\epsilon$ solves \eqref{eqn:sdeas}) for some $t\in (0,t_*]$ and some $\epsilon \in (0, \epsilon_*]$, then $x_{\epsilon t} \in \overline{V}^c$. 
 \end{Definition}  
 
  Let $\mathcal{I}_x\subset \overline{V}^c$ be the set of asymptotically invariant points at $x$.

\begin{Example}
Let $x=0\in \RR^d$ and 
\begin{align*}
V=\{ y\in \RR^d \, : \, y_d < 0 \}.
\end{align*}
 Suppose that $\Phi_\alpha :\RR^d \rightarrow \RR^d$, $\alpha >0$, is of the form
\begin{align*}
\Phi_\alpha(y)=(y_1 \alpha_1^{-1}, y_2 \alpha_2^{-1}, \ldots, y_d \alpha_d^{-1})
\end{align*} 
and $\psi:[0, \epsilon_*]\rightarrow [0, \infty)^d$ is an asymptotic index.  If $y_t^\epsilon:= \Phi_{\psi(\epsilon)}(x_{\epsilon t})$,  
then any $z\in\overline{V}^c$ is asymptotically invariant at $x$.  Indeed, choose any $\delta >0$ such that $B_\delta(z) \subset V^c$ and use that $\psi_d(\epsilon) > 0$.   
\end{Example}

We have the following result, which is a consequence of Theorem~\ref{thm:main}. 
\begin{Theorem}
\label{thm:regular}
Suppose that Assumption~\ref{assump:1} and Assumption~\ref{assump:2} are satisfied.  Then, $x\in \partial V$ is regular for $(x_t, V)$ if 
\begin{align}
\A(x, \leq t_*) \cap \mathcal{I}_x \neq \emptyset. 
\end{align}

\end{Theorem}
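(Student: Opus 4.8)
The plan is to produce, for $\PP_x$-almost every $\omega$, a sequence of positive times $s_j = s_j(\omega) \downarrow 0$ with $x_{s_j}(\omega) \notin \overline{V}$; this forces $\tau_{\overline{V}} = 0$ almost surely, which is exactly the assertion that $x$ is regular for $(x_t, V)$. The two ingredients are Theorem~\ref{thm:main}(iii) — every element of $\K_x(t_*)$ is approached by $y^\epsilon$ along some subsequence $\epsilon_j \downarrow 0$ — together with the definition of an asymptotically invariant point, which is precisely the device that transfers the location of $y^{\epsilon}_t$ near such a point into the location of $x_{\epsilon t}$ outside $\overline{V}$.

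First I would unpack the hypothesis. Fix $z \in \A(x, \leq t_*) \cap \mathcal{I}_x$. By definition of $\A(x, \leq t_*)$ there are $t_0 \in [0, t_*]$ and a control $f \in \C_1$ with $S_x^{t_0}(f) = z$; let $g = S_x(f)$ be the corresponding solution of~\eqref{eqn:detagain}, so that $g_{t_0} = z$. Since $f \in \C_1$, definition~\eqref{def:lambda} gives $\lambda(g) \le \tfrac{1}{2}\int_0^1 |\dot f_s|^2\,ds \le 1$, hence $g \in \K_x$ and its restriction $h := g|_{[0, t_*]}$ lies in $\K_x(t_*)$; the choice of $t_*$ also guarantees that $h$ takes values in $\ustar = \RR^d$, i.e.\ no explosion occurs on $[0, t_*]$. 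A small but essential point: $z \in \mathcal{I}_x \subset \overline{V}^c$ while $g_0 = x \in \partial V \subset \overline{V}$, so $z \ne g_0$ and therefore $t_0 > 0$; thus $t_0 \in (0, t_*]$, which is exactly the range of times appearing in the definition of asymptotic invariance.

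Next I would apply Theorem~\ref{thm:main}(iii) to $h \in \K_x(t_*)$: on a full-measure event $\Omega_1$ there is a subsequence $\epsilon_j = \epsilon_j(\omega) \downarrow 0$ with $d_{t_*}(y^{\epsilon_j}(\omega), h) \to 0$. Let $\Omega_2$ be the full-measure event from the definition of asymptotic invariance of $z$ at $x$, with its associated $\delta > 0$. For $\omega \in \Omega_1 \cap \Omega_2$ and all large $j$, $d_{t_*}(y^{\epsilon_j}(\omega), h) < \infty$, so $y^{\epsilon_j}_{t_0}(\omega) \in \RR^d$ (no explosion before $t_*$) and
\begin{align*}
|y^{\epsilon_j}_{t_0}(\omega) - z| = |y^{\epsilon_j}_{t_0}(\omega) - h_{t_0}| \le d_{t_*}\big(y^{\epsilon_j}(\omega), h\big) \longrightarrow 0 ,
\end{align*}
so eventually $y^{\epsilon_j}_{t_0}(\omega) \in B_\delta(z)$, while $\epsilon_j \in (0, \epsilon_*]$ and $t_0 \in (0, t_*]$. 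By the defining property of $\Omega_2$ this forces $x_{\epsilon_j t_0}(\omega) \in \overline{V}^c$ for all such $j$.

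Finally, setting $s_j := \epsilon_j t_0$, we have $s_j > 0$, $s_j \downarrow 0$, and $x_{s_j}(\omega) \notin \overline{V}$ for all large $j$, hence $\tau_{\overline{V}}(\omega) = 0$. Since $\PP_x(\Omega_1 \cap \Omega_2) = 1$, this yields $\PP_x\{\tau_{\overline{V}} > 0\} = 0$, i.e.\ $x$ is regular for $(x_t, V)$. The argument is short, and I do not anticipate a genuine obstacle: the only points requiring care are the non-degeneracy $t_0 > 0$ (needed to legitimately invoke asymptotic invariance) and the observation that $y^{\epsilon_j}_{t_0}(\omega)$ remains in $\RR^d$, which is automatic once $d_{t_*}(y^{\epsilon_j}(\omega), h) < \infty$. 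Everything else is bookkeeping with the definitions of $\A(x,\leq t_*)$, $\mathcal{I}_x$, and $\tau_{\overline{V}}$.
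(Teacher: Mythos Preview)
Your proof is correct and follows essentially the same approach as the paper: pick $z$ in the intersection, realize it as $g_{t_0}$ for a control trajectory $g\in\K_x(t_*)$, get a subsequence along which $y^{\epsilon_j}$ comes within $\delta$ of $g$, and invoke asymptotic invariance to place $x_{\epsilon_j t_0}\in\overline{V}^c$. The only cosmetic difference is that the paper cites Proposition~\ref{prop:2} (which needs $\lambda(g)<1$) and then approximates $g$ by some $g^*$ with $\lambda(g^*)<1$ when $\lambda(g)=1$, whereas you invoke Theorem~\ref{thm:main}(iii) directly, which already absorbs that approximation step; your route is slightly more streamlined, and your explicit check that $t_0>0$ (needed for the invariance clause) is a nice point the paper leaves implicit.
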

\begin{proof}
Suppose $z\in \A (x, \leq t_*) \cap \mathcal{I}_x$.  Since $z$ is asymptotically invariant, there 
exists $\delta >0$ such that whenever $y_t^\epsilon \in B_\delta(z)$ we have $x_{\epsilon t} \in \overline{V}^c$, almost surely. 
Since $z \in  \A (x, \leq t_*)$, there exists $t\in (0, t_*]$ and an $f\in \C_1$ such that $S_x^t(f)=z\in \overline{V}^c.$  Let $g=S_x^\cdot(f) \in \C^0([0, t_*]; \RR^d)$ and note that by definition,  $g \in \K_x(t_*)$.  
 If $\lambda(g) = 1$, then as in the proof of Theorem \ref{thm:main}, we can find $g^*$ with $\lambda(g^*) < 1$ and $\sup_{t \in [0, t^*]}|g_t - g_t^*| < \delta/2$. If, on the other hand, $\lambda(g) < 1$ we 
 simply set $g^* = g$. 
 By Proposition~\ref{prop:2}, there exists a deterministic sequence $\epsilon_n> \epsilon_{n+1}>0$ with $\epsilon_n \rightarrow 0$ such that 
\begin{align*}
\PP\left\{ d_{t_*}( y^{\epsilon_n}, g^*) < \frac{\delta}{2} \text{ for infinitely many } n \right\} =1,  
\end{align*}  
and consequently 
\begin{align*}
\PP\left\{ d_{t_*}( y^{\epsilon_n}, g) < \delta \text{ for infinitely many } n \right\} =1 \,.  
\end{align*}  
Since $g_t = z$, then 
\begin{align*}
\PP\left\{ |y^{\epsilon_n}- z| < \delta \text{ for infinitely many } n \right\} =1. 
\end{align*}  
Thus by the asymptotic invariance of $z$, $x_{\epsilon_n t} \in \overline{V}^c$ for infinitely many $n$, almost surely.  Hence,     
\begin{align*}
\PP_x\{ \tau_{\overline{V}} = 0 \} \geq \PP\{ d_{t_*}( y^{\epsilon_n}, g) < \delta \text{ for infinitely many } n \}=1  \end{align*} 
and the proof is finished.       
\end{proof}

The next two results provide sufficient conditions on the noise that guarantee a given boundary point is regular.   Before proceeding, we let $\mathcal{R}(A)$ denote the range of the matrix $A$, or equivalently the space spanned by the columns of $A$. 

\begin{Proposition}
\label{prop:invariance}
Let $x\in \partial V$ and suppose that $\tilde{b}\in C^\infty(\RR^d; \RR^d)$ and $\tilde{\sigma}\in C^\infty(\RR^d; M_{d\times k})$.  
Assume there exists $v \in \mathcal{R}(\tilde{\sigma}(x))$ satisfying the following two properties:
\begin{itemize}
\item[(qi)] $x+ \lambda v\in \overline{V}^c$ for all $\lambda \in (0,1]$.  
\item[(qii)]  For all $\lambda \in (0,1]$, there exists $\delta_\lambda >0$ such that if $\delta_{\epsilon, \lambda}= \sqrt{\epsilon \log \log \epsilon^{-1}} \delta_\lambda$ and $\lambda_\epsilon = \lambda\sqrt{\epsilon \log \log \epsilon^{-1}}$, then $$B_{\delta_{\epsilon, \lambda}}(x+ \lambda_\epsilon v) \subset \overline{V}^c$$ for all $\epsilon >0$ small enough.    
\end{itemize}
Then, $x$ is regular for $(x_t, V)$.    
\end{Proposition}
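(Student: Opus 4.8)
The plan is to apply Theorem~\ref{thm:regular} after choosing a suitable \emph{elliptic} rescaling of~\eqref{eqn:sdemain} in which the distinguished direction $v$ appears at Brownian scale. Since $v\in\mathcal{R}(\tilde\sigma(x))$, fix $w\in\RR^k$ with $\tilde\sigma(x)w=v$; note that $v\neq 0$ (otherwise (qi) would force $x\in\overline V^c$, contradicting $x\in\partial V$), hence $w\neq 0$. First I would take the family of weak contractions centered at $x$ from Example~\ref{ex:1}, namely $\Phi_\alpha(y)=x+\big(\alpha_1^{-1}(y-x)_1,\dots,\alpha_d^{-1}(y-x)_d\big)$, together with the uniform Brownian asymptotic index $\psi(\epsilon)=\big(s(\epsilon),\dots,s(\epsilon)\big)$, where $s(\epsilon):=\sqrt{\epsilon\log\log\epsilon^{-1}}$ (an asymptotic index by Example~\ref{ex:2} with $\ell=k=1$), defined on $[0,\epsilon_*]$ for an $\epsilon_*>0$ to be shrunk at the end; the constant $\kappa$ in Definition~\ref{def:1}(iii) may be taken arbitrarily large since $U=\ustar=\RR^d$ and $U_\alpha=\Phi_\alpha(\RR^d)=\RR^d$. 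Because each $\Phi_\alpha$ is affine, the Hessian contribution in~\eqref{eqn:beps1} drops out, and a direct computation from~\eqref{eqn:beps1}--\eqref{eqn:sigeps1} (using $s(\epsilon)^2=\epsilon r(\epsilon)$) gives $b_\epsilon(y)=\tfrac{\epsilon}{s(\epsilon)}\tilde b\big(\Phi^{-1}_{\psi(\epsilon)}(y)\big)$ and $\sigma_\epsilon(y)=\tilde\sigma\big(\Phi^{-1}_{\psi(\epsilon)}(y)\big)$, with $\Phi^{-1}_{\psi(\epsilon)}(y)=x+s(\epsilon)(y-x)$. Since $\epsilon/s(\epsilon)=\sqrt{\epsilon/r(\epsilon)}\to 0$ and $\Phi^{-1}_{\psi(\epsilon)}(y)\to x$ uniformly on compacts, Assumption~\ref{assump:1} holds with $\ustar=\RR^d$, $b\equiv 0$ and $\sigma\equiv\tilde\sigma(x)$; Lemma~\ref{lem:1} then shows that $y^\epsilon_t:=\Phi_{\psi(\epsilon)}(x_{\epsilon t})$ as in~\eqref{eqn:coc} satisfies Assumption~\ref{assump:2}, so Theorem~\ref{thm:regular} applies.

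Next I would identify the limiting control system. With $b\equiv 0$ and $\sigma\equiv\tilde\sigma(x)$, equation~\eqref{eqn:detagain} reads $\dot g_t=\tilde\sigma(x)\dot f_t$, $g_0=x$, with global-in-time solution $g_t=x+\tilde\sigma(x)f_t$; hence, choosing $L=\overline{B_R(x)}$ with $R>\sqrt2\,\|\tilde\sigma(x)\|$ (which contains $\K_x$, since $\tfrac12\int_0^1|\dot f_s|^2\,ds\le1$ forces $|g_t-x|\le\sqrt2\,\|\tilde\sigma(x)\|$), we may take $t_*=1$ in~\eqref{eqn:tdef}. Set $\lambda:=\min\{1,\sqrt2/|w|\}\in(0,1]$ and $z:=x+\lambda v$. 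With the control $f_s=s\lambda w$ we have $\tfrac12\int_0^1|\dot f_s|^2\,ds=\tfrac12\lambda^2|w|^2\le1$, i.e. $f\in\C_1$, and $S_x^1(f)=x+\tilde\sigma(x)(\lambda w)=z$, so $z\in\A(x,1)\subseteq\A(x,\le t_*)$; moreover $z\in\overline V^c$ by (qi).

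It remains to check $z\in\mathcal{I}_x$, which is a deterministic statement for this rescaling. Since $\Phi_{\psi(\epsilon)}$ is affine, $y^\epsilon_t=x+s(\epsilon)^{-1}(x_{\epsilon t}-x)$, so the condition $y^\epsilon_t\in B_\delta(z)$ is equivalent to $x_{\epsilon t}\in B_{s(\epsilon)\delta}\big(x+s(\epsilon)\lambda v\big)$. Taking $\delta:=\delta_\lambda$ from (qii) and recalling $\lambda_\epsilon=\lambda s(\epsilon)$, $\delta_{\epsilon,\lambda}=s(\epsilon)\delta_\lambda$, this places $x_{\epsilon t}\in B_{\delta_{\epsilon,\lambda}}(x+\lambda_\epsilon v)\subseteq\overline V^c$ for all sufficiently small $\epsilon$; shrinking $\epsilon_*$ so that this inclusion holds on all of $(0,\epsilon_*]$ — which does not affect the validity of Assumptions~\ref{assump:1}--\ref{assump:2} or of Lemma~\ref{lem:1} — shows $z$ is asymptotically invariant at $x$. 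Therefore $z\in\A(x,\le t_*)\cap\mathcal{I}_x\neq\emptyset$, and Theorem~\ref{thm:regular} yields that $x$ is regular for $(x_t,V)$.

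The routine-but-delicate part is the verification of Assumptions~\ref{assump:1} and~\ref{assump:2} for the uniform Brownian rescaling — the It\^o computation identifying $b_\epsilon,\sigma_\epsilon$ and the invocation of Lemma~\ref{lem:1} — together with the bookkeeping needed to take $t_*=1$. The one genuinely load-bearing choice is the truncation $\lambda=\min\{1,\sqrt2/|w|\}$: it must be small enough that the target $z=x+\lambda v$ is reachable under the $H^1$-ball constraint defining $\C_1$, yet it still lies in $\overline V^c$ by (qi)--(qii). Once Theorem~\ref{thm:regular} is available, no step presents a serious obstacle.
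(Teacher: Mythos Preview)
Your proposal is correct and follows essentially the same approach as the paper's proof: both use the uniform Brownian rescaling $\Phi_{\psi(\epsilon)}(y)=x+s(\epsilon)^{-1}(y-x)$ with $s(\epsilon)=\sqrt{\epsilon\log\log\epsilon^{-1}}$, verify Assumptions~\ref{assump:1}--\ref{assump:2} via Lemma~\ref{lem:1}, identify the limiting control system $\dot g_t=\tilde\sigma(x)\dot f_t$, reach $z=x+\lambda v$ with a linear control, and check asymptotic invariance of $z$ via the explicit equivalence $y^\epsilon_t\in B_\delta(z)\iff x_{\epsilon t}\in B_{s(\epsilon)\delta}(x+s(\epsilon)\lambda v)$ together with (qii). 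Your write-up is in fact slightly more explicit than the paper's in justifying $t_*=1$ and in choosing $\lambda=\min\{1,\sqrt2/|w|\}$ to ensure $f\in\C_1$.
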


\begin{proof}
For any multiindex $\alpha >0$, let $\Phi_\alpha^1, \Phi_\alpha: \RR^d\rightarrow \RR^d$ be given by 
\begin{align*}
\Phi_\alpha^1(y) = (y_1 \alpha_1^{-1}, \ldots, y_d \alpha_d^{-1}) \qquad \text{ and } \qquad \Phi_\alpha(y) = \Phi_\alpha^1(y-x)+ x. 
\end{align*}
By Example~\ref{ex:1},  $\{ \Phi_\alpha\}_{\alpha >0}$ is a family of weak contractions centered at $x\in \partial V \subset \RR^d$.  
If we define, for $\epsilon_*>0$ small enough, $\psi:[0, \epsilon_*]\rightarrow [0, \infty)^d$ as
\begin{align*}
\psi(\epsilon) = (\sqrt{\epsilon \log \log \epsilon^{-1}}, \ldots, \sqrt{\epsilon \log \log \epsilon^{-1}}) \,, 
\end{align*}
then by Example~\ref{ex:2}, $\psi$ is an asymptotic index.  For $\epsilon \in (0, \epsilon_*]$, define 
\begin{align}
y_t^\epsilon = \Phi_{\alpha(\epsilon)}(x_{\epsilon t}) \,.
\end{align}
We thus see that for all $t< \tau_1(x_\cdot) \epsilon^{-1}$ (see \eqref{eqn:bup} for the definition of $\tau_1(x_\cdot)$), $y_t^\epsilon$ satisfies an SDE of the form~\eqref{eqn:sdeas} (c.f. 
\eqref{eqn:coc}--\eqref{eqn:sigeps1}) and it is easy to check that 
\begin{align}
b_\epsilon(y) \rightarrow 0 \qquad \text{ and } \qquad  \sigma_\epsilon(y) \rightarrow \tilde{\sigma}(x) 
\end{align}
as $\epsilon \rightarrow 0$ for every $y\in \RR^d$ with the convergence above uniform on compact subsets of $\RR^d$.  
Furthermore, $b_\epsilon, \sigma_\epsilon$ are locally Lipschitz on $\RR^d$ for every $\epsilon \in (0, \epsilon_*]$.  
Thus Assumption~\ref{assump:1} and, by Lemma~\ref{lem:1}, Assumption~\ref{assump:2} are both satisfied.  

The associated deterministic system is
\begin{equation}\label{eqn:ald}
\left\{
\begin{aligned}
\dot{g}_t &= \tilde{\sigma}(x) \dot{f}_t  \,, \\
g_0 &=x \,,  
\end{aligned}
\right.
\end{equation} 
where $f \in \C_1 = \{ h \in \C^0([0,1]; \RR^k) \, : \, \tfrac{1}{2}\textstyle{\int_0^1} |\dot{f}_s|^2 \, ds \leq 1 \}$. 
Note that \eqref{eqn:ald} has constant coefficients as $x\in \partial V$ is the initial condition, which is fixed. 
 Let $v\in \mathcal{R}(\tilde{\sigma}(x))$  satisfy (qi) and (qii).  In particular, $v= \tilde{\sigma}(x) w$ for some $w\in \RR^k$.  Hence, for any small enough
  $\lambda \in (0, 1]$,  $f_t := \lambda t w \in \C_1$ and then $g_t = x + \lambda v t$.  Also, there is $\lambda^0 > 0$ such that  all points $z\in \RR^d$ of the form
\begin{align*}
z= x+  \lambda v \qquad \lambda \in (0, \lambda^0]
\end{align*}              
belong to $\mathcal{A}(x, \leq 1)$.  Since $\overline{V}^c$ is open, there exists $\delta_{\lambda^0} >0$ such that $B_{\delta_{\lambda^0}}(x+\lambda^0 v)\subset \overline{V}^c$, and then by 
 property (qii), $B_{\delta_{\epsilon,\lambda^0}}(x+\lambda_{\epsilon}^0 v)\subset \overline{V}^c$ 
for all $\epsilon >0$ small enough.  But, almost surely, $y_t^\epsilon\in B_{\delta_{\lambda^0}}(x+\lambda^0 v)$  if and only if $x_{\epsilon t} \in  B_{\delta_{\epsilon,\lambda^0}}(x+\lambda_{\epsilon}^0 v) \subset \overline{V}^c$.  Hence, 
\begin{align*}
x + \lambda^0 v \in \mathcal{A}(x, \leq 1) \cap \mathcal{I}_x \neq \emptyset
\end{align*} 
and the proof follows from 
Theorem~\ref{thm:regular}.          
\end{proof}

\begin{Corollary}
\label{cor:noise}
Let $x\in \partial V$ and suppose that $\tilde{b}\in C^\infty(\RR^d; \RR^d)$ and $\tilde{\sigma}\in C^\infty(\RR^d; M_{d\times k})$.  Assume that there exists a unit vector $n(x)$ and 
$\delta >0$ such that $B_{\delta}(x+ \delta n(x))$ is tangent to $\partial V$ at $x$ and is contained in $\overline{V}^c$ (also known as the exterior sphere condition).  
If there is $w \in \mathcal{R}(\tilde{\sigma}(x))$ with $w\cdot n(x)>0$, then $x$ is regular for $(x_t, V)$.     
\end{Corollary}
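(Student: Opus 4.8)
The plan is to reduce everything to Proposition~\ref{prop:invariance} by exhibiting a vector $v \in \mathcal{R}(\tilde{\sigma}(x))$ satisfying its hypotheses (qi) and (qii). The natural choice is $v := c\,w$, where $w$ is the vector provided by the statement and $c > 0$ is a small constant to be fixed below; note that $v \in \mathcal{R}(\tilde{\sigma}(x))$ automatically and $v \cdot n(x) = c\,(w \cdot n(x)) > 0$. All of the verification rests on the elementary algebraic description of the exterior ball: writing $z = y - x$, one has $y \in B_\delta\big(x + \delta n(x)\big)$ if and only if $z \cdot n(x) > |z|^2 / (2\delta)$, and by assumption $B_\delta\big(x + \delta n(x)\big) \subset \overline{V}^c$.

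For (qi), I would choose $c$ small enough that $c\,|w|^2 < 2\delta\,(w \cdot n(x))$, say with a little room to spare. Then for every $\lambda \in (0,1]$ the point $x + \lambda v$ has $z = \lambda v$ with $z \cdot n(x) = \lambda c\,(w \cdot n(x)) > \lambda^2 c^2 |w|^2 / (2\delta) = |z|^2/(2\delta)$, so $x + \lambda v \in B_\delta\big(x + \delta n(x)\big) \subset \overline{V}^c$, which is (qi).

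For (qii), fix $\lambda \in (0,1]$, abbreviate $s(\epsilon) := \sqrt{\epsilon \log\log \epsilon^{-1}}$, and take $\delta_\lambda := \tfrac12 \lambda\,(v \cdot n(x)) > 0$, so that $\delta_{\epsilon,\lambda} = s(\epsilon)\delta_\lambda$ and $\lambda_\epsilon = \lambda\, s(\epsilon)$. For $y \in B_{s(\epsilon)\delta_\lambda}\big(x + \lambda s(\epsilon) v\big)$, write $z = y - x = \lambda s(\epsilon) v + u$ with $|u| < s(\epsilon)\delta_\lambda$. Then $z \cdot n(x) \geq s(\epsilon)\big(\lambda (v\cdot n(x)) - \delta_\lambda\big) = s(\epsilon)\delta_\lambda$, while $|z| \leq s(\epsilon)\big(\lambda|v| + \delta_\lambda\big)$; hence $z \cdot n(x) > |z|^2/(2\delta)$ as soon as $s(\epsilon)\big(\lambda|v| + \delta_\lambda\big)^2 < 2\delta\,\delta_\lambda$, and the latter holds for all sufficiently small $\epsilon$ because the right-hand side is a fixed positive number while $s(\epsilon) \to 0$. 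Thus $B_{\delta_{\epsilon,\lambda}}\big(x + \lambda_\epsilon v\big) \subset B_\delta\big(x + \delta n(x)\big) \subset \overline{V}^c$ for all small $\epsilon$, which is (qii). With (qi) and (qii) verified, Proposition~\ref{prop:invariance} gives that $x$ is regular for $(x_t, V)$.

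I do not expect a genuine obstacle: the entire content is the geometry of the exterior sphere together with the observation that replacing $w$ by a small multiple keeps the whole segment $\{x + \lambda v : \lambda \in (0,1]\}$ inside the exterior ball. The only place requiring mild care is keeping track, in (qii), of the two $\epsilon$-dependent scales $\delta_{\epsilon,\lambda}$ and $\lambda_\epsilon$: both carry the common factor $s(\epsilon)$, so they scale compatibly with the linearization of the ball's boundary at $x$, while the quadratic curvature term contributes only at order $s(\epsilon)^2$ and is therefore harmless.
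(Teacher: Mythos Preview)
Your proof is correct and follows essentially the same route as the paper: set $v$ equal to a small multiple of $w$, verify (qi) and (qii) of Proposition~\ref{prop:invariance} using the geometry of the exterior ball, and apply that proposition. The only cosmetic difference is that the paper phrases the verification via convexity of the exterior ball (after rotating so that $n(x)=e_d$), whereas you use the equivalent algebraic characterization $z\cdot n(x) > |z|^2/(2\delta)$ directly.
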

  
  \begin{proof}
  Without loss of generality, we can rotate and shift the set $V$ so that $x=0$ and $n(0)=e_d$.  In what follows, we thus assume $B_{\delta}(\delta e_d) \subset \overline{V}^c$ is tangent to $\partial V$ at $0$ and there 
   exists a vector $w \in \mathcal{R}(\tilde{\sigma}(0))$ such that $w\cdot n(0)=w_d>0$.  Since $w_d>0$, there exists $\lambda_0>0$  
  \begin{align*}
  \lambda w\in B_{\delta}(\delta e_d) \subset \overline{V}^c \qquad \textrm{for any} \quad \lambda \in (0, \lambda_0].  
  \end{align*}   
Hence, for the choice of $v= \lambda_0 w$, Proposition \ref{prop:invariance} part (qi)  is satisfied because $v \in \mathcal{R}(\tilde{\sigma}(0))$.  
Since $B_{\delta}(\delta e_d) $ is open, there exists $\delta'>0$ such that $B_{\delta'}(v) \subset  B_{\delta}(\delta e_d )$.   By convexity, whenever $v' \in  B_{\delta'}(v)$ then 
$\lambda v'\in B_{\delta} (\delta e_d)$ for all $\lambda \in (0,1]$.  Since $v' \in B_{\delta'}(v)$ if and only if $\sqrt{\epsilon \log \log \epsilon^{-1}} v' \in B_{\delta_{\epsilon}}(v_\epsilon)$ for any small 
$\epsilon >0$, property (qii) of Proposition~\ref{prop:invariance}  follows.  An application of  Proposition~\ref{prop:invariance} finishes the proof.  
   \end{proof}
         
         Using a nearly identical proof, we can also obtain the following result which is the so-called \emph{exterior cone condition}.  
         
  \begin{Corollary}
\label{cor:noisecone}
Let $x\in \partial V$ and suppose that $\tilde{b}\in C^\infty(\RR^d; \RR^d)$ and $\tilde{\sigma}\in C^\infty(\RR^d; M_{d\times k})$.  Suppose that $V$ satisfies the exterior cone condition at $x$; that is, there exists a basis $\{ x_1, x_2, \ldots, x_d \}$ of $\RR^d$ such that  
\begin{align*}
\text{\emph{Cone}}(x; \,x_1, \ldots, x_d): = \{ x+ \lambda_1 x_1 + \dots + \lambda_d x_d \, : \, \lambda_i \in (0,1) \} \subset \overline{V}^c.  
\end{align*}   
If the column space of $\tilde{\sigma}(x)$ contains a vector $w$ such that $x+w\in \text{\emph{Cone}}(x; \,x_1, \ldots, x_d)$, then $x$ is regular for $(x_t, V)$.     
\end{Corollary}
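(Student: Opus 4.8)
The plan is to deduce the statement from Proposition~\ref{prop:invariance}, exactly as Corollary~\ref{cor:noise} is deduced from it. The standing hypotheses $\tilde b, \tilde\sigma \in C^\infty(\RR^d)$ and $x \in \partial V$ are already in force, and Proposition~\ref{prop:invariance} supplies (internally) the rescaling $y_t^\epsilon = \Phi_{\psi(\epsilon)}(x_{\epsilon t})$ with $\Phi_\alpha(y) = \Phi_\alpha^1(y-x) + x$, $\Phi_\alpha^1(y) = (y_1\alpha_1^{-1},\dots,y_d\alpha_d^{-1})$, $\psi(\epsilon) = (\sqrt{\epsilon\log\log\epsilon^{-1}},\dots,\sqrt{\epsilon\log\log\epsilon^{-1}})$, together with the verification of Assumption~\ref{assump:1} and Assumption~\ref{assump:2} and the limiting system $\dot g_t = \tilde\sigma(x)\dot f_t$. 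Hence the only thing to do is to exhibit a vector $v \in \mathcal{R}(\tilde\sigma(x))$ satisfying conditions (qi) and (qii) of Proposition~\ref{prop:invariance}.

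The natural candidate is $v = w$, the given vector in $\mathcal{R}(\tilde\sigma(x))$ with $x + w \in \mathrm{Cone}(x;\,x_1,\dots,x_d)$. I would first record two elementary facts about the cone: (a) it equals the translate by $x$ of the open parallelepiped $\{\sum_i \lambda_i x_i : \lambda_i \in (0,1)\}$, hence is open and convex; and (b) it is invariant under the vertex dilations $z \mapsto x + s(z-x)$ for $s \in (0,1]$, because such a map sends $x + \sum_i \lambda_i x_i$ to $x + \sum_i (s\lambda_i)x_i$ and $s\lambda_i \in (0,1)$ whenever $\lambda_i \in (0,1)$ and $s \in (0,1]$. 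Writing $x + w = x + \sum_i \mu_i x_i$ with $\mu_i \in (0,1)$, fact (b) with $z = x + w$ gives $x + \lambda w \in \mathrm{Cone}(x;\,x_1,\dots,x_d) \subset \overline{V}^c$ for every $\lambda \in (0,1]$, which is condition (qi).

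For condition (qii), fix $\lambda \in (0,1]$. Since $x + \lambda w$ lies in the open cone, there is $\delta_\lambda > 0$ with $B_{\delta_\lambda}(x + \lambda w) \subset \mathrm{Cone}(x;\,x_1,\dots,x_d)$. For all sufficiently small $\epsilon$ one has $s := \sqrt{\epsilon\log\log\epsilon^{-1}} \le 1$, and the vertex dilation by $s$ carries $B_{\delta_\lambda}(x+\lambda w)$ onto the ball $B_{s\delta_\lambda}(x + s\lambda w)$ while keeping it inside the cone by fact (b); with $\delta_{\epsilon,\lambda} := s\delta_\lambda$ and $\lambda_\epsilon := \lambda s$ this is precisely $B_{\delta_{\epsilon,\lambda}}(x + \lambda_\epsilon w) \subset \overline{V}^c$, i.e. condition (qii). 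An application of Proposition~\ref{prop:invariance} then shows that $x$ is regular for $(x_t, V)$. I do not expect any genuine obstacle here — as the paper remarks, the argument is \emph{nearly identical} to the proof of Corollary~\ref{cor:noise} — the only point needing a little care being the clean statement of facts (a)--(b) and the observation that, since $\psi(\epsilon)$ has all coordinates equal, $\Phi_{\psi(\epsilon)}$ is just the scalar dilation by $s^{-1}$ about $x$, so that it maps balls to balls and is compatible with the $\epsilon$-scaling encoded in (qii).
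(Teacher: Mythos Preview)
Your proposal is correct and follows essentially the same route as the paper: since the paper only remarks that the proof is ``nearly identical'' to that of Corollary~\ref{cor:noise}, your verification of (qi) and (qii) for $v=w$ via the vertex-dilation invariance of the open cone is exactly the analogue of the star-shapedness argument used there for the tangent ball. If anything, your presentation is slightly cleaner, as you avoid the preliminary rotation/shift and isolate the two geometric facts (openness/convexity and invariance under $z\mapsto x+s(z-x)$) that make the scaling in (qii) work.
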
   
 
            If $\mathcal{R}(\tilde{\sigma}(x))$ is not all of $\RR^d$, then identifying  regular points for $(x_t, V)$ on $\partial V$ can be complicated, because
 one has to know  the almost sure dynamics near the boundary point. Moreover, the method used in the proof of Proposition~\ref{prop:invariance} is not sufficient to characterize all points.  However, as the next examples 
 illustrates, the techniques developed here can be still useful.           
 
   \begin{Example}
   \label{ex:regIK}
 As in Example~\ref{ex:IK2}, we  consider the iterated Kolmogorov equation in dimension $d=2$ and assume $V=B_r(0)$ for a given $r>0$.  Corollary~\ref{cor:noise} implies that all points on $x_0=(x_1(0), x_2(0)) \in \partial V$ with $x_2(0) \neq 0$ have normal vector with non-zero second component, and therefore are regular for $(x_t, V)$.  On the other hand, if $x_2(0) = 0$, then
  $n(x_0)=(\pm 1, 0)$ and relations~\eqref{eqn:limsupIK} and~\eqref{eqn:liminfIK} with $x_2(0)=0$ imply that points $(\pm r, 0)$ are also regular for $(x_t, V)$.  Note that the same result holds if $V=B_r(0)^c$. 
   \end{Example}
   
   \begin{Example}
 Next, consider the same problem as in Example~\ref{ex:regIK} and Example~\ref{ex:IK2} but with $V\subset \RR^2$ assumed to be a general bounded open set with $C^\infty$ boundary $\partial V$.  
 Then, by Corollary~\ref{cor:noise}, all points $x_0 \in \partial V$, where the outward unit normal $n(x_0)=(n_1(x_0), n_2(x_0))$ to $\partial V$ has $n_2(x)\neq 0$ are regular.  
 If, on the other hand, $n(x_0)=(\pm 1,0)$ for some $x_0 \in \partial V$, then~\eqref{eqn:limsupIK} and~\eqref{eqn:liminfIK} 
  imply that $x_0$ is regular if and only if  $x_2(0)\geq 0$ and $n(x_0)=(+1, 0)$, or  $x_2(0)\leq 0$ and $n(x_0)=(-1,0)$.          
   \end{Example}

  \subsection{Modification of the boundary} If the domain $V$ is not apriori specified, the idea of this section is to slightly modify $V$ so that all points on the boundary are regular.  We do this using polygonal approximations under the assumption that there is noise in a uniform direction on the boundary and $V$ is convex and bounded with \emph{non-flat} $C^1$ boundary $\partial V$; that is, $\partial V$ is $C^1$ and for each $x \in \partial V$ and any $r > 0$, the set $\partial V \cap B(x, r)$ is not a subset of a hyperplane. The latter condition is satisfied, for example, if $V$ is strictly convex, or if at each 
  $x \in \partial V$ there is at least one non-zero principal curvature.

The construction of our polygonal approximations makes use of convex hulls of randomly chosen points on the boundary.  There are many different ways to do the selection of points, but here we do it according to Hausdorff measure    
 $\pi$ on $\partial V$.  That is, we will choose sufficiently many vertices independently and according to law of $\pi$.  Intuitively, for a large number of vertices, the convex hull of these points should be \emph{close} to $V$.  The main result in Sch\"{u}tt and Werner~\cite{SW_03} makes this precise on a set of high probability.  It turns out that such a resulting polygon cannot have, almost surely, any face parallel to the uniform direction in which the noise acts on the boundary.  Thus, we can then apply Corollary~\ref{cor:noisecone}.     
    
\begin{Remark}
Polygonal approximation of smooth domains is used, for example, in finite element method (FEM) to numerically solve differential equations.  
   \end{Remark}

   \begin{Theorem}
   \label{thm:convexhull}
 Let $d\geq 2$, $v\in \RR^d$ be a unit vector, and suppose that $V\subset \RR^d$ is convex, bounded and non-flat $C^1$ boundary $\partial V$.  Consider a probability space $(\widetilde{\Omega}, \widetilde{\mathcal{F}},\mathbf{Q})$ such that $\xi_1, \xi_2, \xi_3 \ldots$ are i.i.d. random variables defined on $(\widetilde{\Omega}, \widetilde{\mathcal{F}},\mathbf{Q})$ with distribution $\pi$.  Then, for every $\epsilon >0$, there exists $n(\epsilon) > 0$ and a set $S_\epsilon$ with $\mathbf{Q}(S_\epsilon) > 1 - \epsilon$ such that the following properties are satisfied:
 \begin{itemize}
 \item[(i)] For every collection of $d$-points in $\{ \xi_1, \xi_2, \ldots, \xi_{n(\epsilon)}\}$ there exists a unique hyperplane $H$ that contains those $d$ points.  Furthermore, $H$ is not parallel to $v$.  
 \item[(ii)]  If $[\xi_1, \xi_2, \ldots, \xi_{n(\epsilon)}]$ denotes the closed convex hull of $\xi_1, \xi_2, \ldots, \xi_{n(\epsilon)}$, then
 \begin{align*}
 |\overline{V}|- | [\xi_1, \xi_2, \ldots, \xi_{n(\epsilon)}]|< \epsilon \,,  
 \end{align*} 
where $|A |$ denotes the Lebesgue measure of $A$ in $\RR^d$.    
 \end{itemize}     
  
    \end{Theorem}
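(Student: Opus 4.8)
The plan is to prove conclusions (i) and (ii) separately. Conclusion (ii) I would obtain from the random polytope approximation theorem of Sch\"utt and Werner~\cite{SW_03}. First note that, since the $\xi_i$ lie on $\partial V\subset\overline V$ and $\overline V$ is convex, $[\xi_1,\dots,\xi_n]\subseteq\overline V$, so $0\le|\overline V|-|[\xi_1,\dots,\xi_n]|$ for every $n$; and by \cite{SW_03} (or, more elementarily, since $\pi$ has full support on $\partial V$ and $\mathrm{conv}(\partial V)=\overline V$, so the nested convex hulls $[\xi_1,\dots,\xi_n]$ increase almost surely to $\overline V$ and one passes to expectations by dominated convergence) one has $\mathbf{E}_{\mathbf{Q}}\big[|\overline V|-|[\xi_1,\dots,\xi_n]|\big]\to 0$ as $n\to\infty$. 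I would then fix $n(\epsilon)\ge d+1$ large enough that this expectation is below $\epsilon^2$ and set $S_\epsilon^{(2)}:=\{|\overline V|-|[\xi_1,\dots,\xi_{n(\epsilon)}]|<\epsilon\}$, so that Markov's inequality yields $\mathbf{Q}(S_\epsilon^{(2)})>1-\epsilon$, which is (ii).

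For (i), the key observation is that $d$ points $p_1,\dots,p_d\in\RR^d$ span a unique hyperplane that is \emph{not} parallel to $v$ if and only if the $d\times d$ matrix with columns $p_2-p_1,\dots,p_d-p_1,v$ is nonsingular (vanishing of this determinant encodes both affine dependence of the $p_i$ and, when they are affinely independent, the inclusion of $v$ in the direction space of their affine hull). So it will suffice to show that
\[
\pi^{\otimes d}\big(\{(p_1,\dots,p_d)\in(\partial V)^d:\ \det[\,p_2-p_1,\dots,p_d-p_1,\ v\,]=0\}\big)=0,
\]
since then a union bound over the finitely many $d$-element subsets of $\{\xi_1,\dots,\xi_{n(\epsilon)}\}$ produces a full-measure event $S_\epsilon^{(1)}$ on which (i) holds. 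I would prove the displayed identity by induction on $m=1,\dots,d$, with $B_m:=\{(p_1,\dots,p_m)\in(\partial V)^m:\ p_2-p_1,\dots,p_m-p_1,v\ \text{are linearly dependent}\}$: one has $B_1=\varnothing$ since $v\ne 0$, and, assuming $\pi^{\otimes(m-1)}(B_{m-1})=0$, Fubini reduces the inductive step to the observation that for $(p_1,\dots,p_{m-1})\notin B_{m-1}$ the bad values of $p_m$ form the set $\partial V\cap W$, where $W:=p_1+\mathrm{span}(p_2-p_1,\dots,p_{m-1}-p_1,v)$ is an affine subspace of dimension $m-1$ whose direction space contains $v$. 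For $m<d$ this $W$ has dimension $<d-1$, so $\pi(\partial V\cap W)=0$ automatically; for $m=d$ it is a genuine hyperplane parallel to $v$.

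The step where the non-flatness hypothesis is indispensable --- and what I expect to be the main obstacle --- is therefore the claim that $\pi(\partial V\cap H)=0$ for every hyperplane $H\subset\RR^d$. If $H$ meets $\mathrm{int}(V)$, then $\partial V\cap H\subseteq\partial_H(\overline V\cap H)$ is $(d-2)$-dimensional, hence $\mathcal H^{d-1}$-null. Otherwise, $\pi(\partial V\cap H)>0$ would force $H$ to support $V$ with $F:=\overline V\cap H=\partial V\cap H$ a convex set of positive $(d-1)$-dimensional measure, hence with nonempty relative interior in $H$; choosing $x$ in that relative interior, a short convexity argument --- any boundary point near $x$ lying strictly on the $V$-side of $H$ can be written as a convex combination of points of $\overline V$ near $x$ with a fixed interior point of $V$, and so lies in $\mathrm{int}(V)$, a contradiction --- shows that $\partial V\cap B(x,r)\subseteq H$ for small $r$, contradicting non-flatness. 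This closes the induction, and setting $S_\epsilon:=S_\epsilon^{(1)}\cap S_\epsilon^{(2)}$ (which has $\mathbf{Q}(S_\epsilon)>1-\epsilon$ since $\mathbf{Q}(S_\epsilon^{(1)})=1$) gives a set on which both (i) and (ii) hold. Without non-flatness the argument genuinely breaks: $\partial V$ could then carry a flat $(d-1)$-dimensional face parallel to $v$ of positive $\pi$-measure, and (i) would fail on an event of positive probability.
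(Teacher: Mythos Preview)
Your proposal is correct and follows the same overall architecture as the paper: an auxiliary ``hyperplane lemma'' ($\pi(\partial V\cap H)=0$ for every hyperplane $H$), then Fubini plus a union bound for (i), then Sch\"utt--Werner plus Markov with threshold $\epsilon^2$ for (ii). The paper isolates the hyperplane lemma as a separate result (Lemma~\ref{lem:planeint}) and in the proof of (i) treats affine independence of the $\xi_{i_j}$ and non-parallelism to $v$ as two separate almost-sure events; your determinant criterion $\det[p_2-p_1,\dots,p_d-p_1,v]\neq 0$ neatly fuses these into one condition and makes the induction on $m$ transparent.

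The main stylistic difference is in the hyperplane lemma itself. The paper argues by coordinates: it distinguishes the cases where $H$ is an ``extremal'' slice (the plane $\{x_1=\alpha\}$ with $\alpha$ equal to the rightmost or leftmost value of $x_1$ on $\partial V$) versus an interior slice, and in the latter case invokes the implicit function theorem to see that $\partial V\cap H$ is locally a $(d-2)$-manifold. Your dichotomy ``$H$ meets $\mathrm{int}(V)$'' versus ``$H$ supports $V$'' is the same split phrased convex-geometrically, and your treatment of the supporting case (a face of positive $\mathcal H^{d-1}$-measure would have nonempty relative interior, near which every nearby boundary point on the $V$-side is a proper convex combination of a fixed interior point of $V$ and a point of the face, hence lies in $\mathrm{int}(V)$, contradicting non-flatness) is a clean alternative to the paper's linear-independence counting argument. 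Both routes are valid; yours avoids the implicit function theorem and is a bit more self-contained.
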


    In order to prove Theorem~\ref{thm:convexhull}, we first establish the following auxiliary result.
    
   \begin{Lemma}
   \label{lem:planeint}
   Suppose $d\geq 2$ and that $V\subset \RR^d$ is convex with $C^1$ non-flat boundary $\partial V$.  Then, any hyperplane in $\RR^d$ intersects $\partial V$ only on a set of $\pi$-measure zero.      
   \end{Lemma}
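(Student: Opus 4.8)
The plan is to split on the position of the hyperplane $H\subset\RR^d$ relative to $V$, and to show that in the only delicate case the $C^1$ and non-flatness hypotheses force a contradiction. Throughout, $\pi$ is the Hausdorff (surface) measure on $\partial V$; since $\partial V$ is a $C^1$ hypersurface, $\pi$ is locally finite and comparable to $(d-1)$-dimensional Hausdorff measure $\mathcal{H}^{d-1}$ restricted to $\partial V$, so ``$\pi$-null'' is the same as ``$\mathcal{H}^{d-1}$-null''; it therefore suffices to prove $\mathcal{H}^{d-1}(H\cap\partial V)=0$ for an arbitrary hyperplane $H$. Note $V$ is open, convex, nonempty (it has nonempty boundary), so $\mathrm{int}(V)=V$ is connected and $\overline V=\overline{\mathrm{int}(V)}$.

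First I would dispose of the easy cases. If $H\cap\overline V=\emptyset$, the claim is trivial. If $H$ meets $\mathrm{int}(V)$, then $C:=H\cap\overline V$ is a closed convex subset of the $(d-1)$-dimensional affine space $H$ with nonempty relative interior, and by the standard fact that relative interior commutes with intersection by an affine set meeting the interior, $\mathrm{relint}_H(C)=H\cap V$. Hence $H\cap\partial V=(H\cap\overline V)\setminus(H\cap V)=C\setminus\mathrm{relint}_H(C)$ is the relative boundary of a $(d-1)$-dimensional closed convex set in $H$, which is locally the graph of a convex function over a $(d-2)$-dimensional domain and so has $\mathcal{H}^{d-1}$-measure zero. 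The remaining case is $H\cap\overline V\neq\emptyset$ and $H\cap\mathrm{int}(V)=\emptyset$; then, $\mathrm{int}(V)$ being connected, it lies in one open half-space bounded by $H$, so $H$ is a supporting hyperplane and $H\cap\partial V=H\cap\overline V=:F$, a nonempty closed convex subset of $H$.

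In this last case I would argue by contradiction. Suppose $\mathcal{H}^{d-1}(F)>0$; then $F$ is $(d-1)$-dimensional, so $\mathrm{relint}_H(F)$ contains a relatively open ball $B_H(x,\rho)\subset H$, and $B_H(x,\rho)\subset F\subset\partial V$. Since $H$ supports $V$ at $x$ and $\partial V$ is $C^1$, the supporting hyperplane at $x$ is unique and equals the tangent plane, so near $x$ the surface $\partial V$ is a $C^1$ graph over $H$: there exist $r>0$, a unit normal $\nu$ with $\overline V\subset\{z:\langle z-x,\nu\rangle\le 0\}$, and a $C^1$ function $\phi\ge 0$ on $D:=\{p\in H:|p-x|<r\}$ with $\phi(x)=0$ and $\nabla\phi(x)=0$, such that $\partial V$ agrees near $x$ with $\{p-\phi(p)\nu:p\in D\}$ (here $\phi\ge 0$ because $\langle(p-\phi(p)\nu)-x,\nu\rangle=-\phi(p)\le 0$ using $p\in H$). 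A point of this graph lies on $H$ precisely when $\phi(p)=0$; since each $p\in B_H(x,\min(\rho,r))\subset\partial V\cap H$, this forces $\phi\equiv 0$ on that ball, so $\partial V$ coincides with $B_H(x,r'')\subset H$ in a ball $B(x,r'')$ for some $r''>0$. This contradicts the non-flatness of $\partial V$ at $x$. Hence $\mathcal{H}^{d-1}(F)=0$, which finishes the proof.

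The hard part is precisely the supporting-hyperplane case: one must exclude a $(d-1)$-dimensional flat piece of $\partial V$, and the only leverage available is the $C^1$-graph structure together with non-flatness. The mechanism is that a positive-measure flat piece of a $C^1$ convex hypersurface is automatically \emph{relatively open} in $\partial V$ (the defining graph function is nonnegative, vanishes on a full $(d-1)$-ball, and its graph is \emph{all} of $\partial V$ locally, hence it vanishes identically near $x$), which is exactly what non-flatness forbids. By contrast, the case where $H$ cuts through the interior is routine convex geometry, as is the reduction to $\mathcal{H}^{d-1}$ via the $C^1$ regularity of $\partial V$.
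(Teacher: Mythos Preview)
Your proof is correct and follows the same two-case split as the paper (supporting hyperplane versus hyperplane meeting the interior), but the execution in each case differs in a way worth noting. In the case where $H$ meets $\mathrm{int}(V)$, the paper fixes a defining function $\Phi$ for $\partial V$, observes that the normal to $\partial V$ cannot be parallel to the normal of $H$, and invokes the implicit function theorem to conclude that $\partial V\cap H$ is locally a $(d-2)$-dimensional $C^1$ manifold. Your argument instead uses pure convex geometry: $H\cap\partial V$ is the relative boundary of the $(d-1)$-dimensional convex body $H\cap\overline V$ in $H$, hence $\mathcal{H}^{d-1}$-null. This is slicker and avoids any local chart work. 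In the supporting case, the paper argues via the maximal number of linearly independent position vectors in $\partial V\cap\mathcal P$ and then appeals to convexity to produce a full-dimensional simplex inside $\partial V\cap\mathcal P$, while you go directly through the $C^1$ graph parametrisation over $H$ to force $\phi\equiv 0$ on a ball; the two arguments reach the same contradiction with non-flatness, and yours makes the role of the $C^1$ hypothesis more explicit.
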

   
   \begin{Remark}
   Note that in this result, we may drop the hypothesis that $V$ is bounded.  
   \end{Remark}
   
   \begin{proof}[Proof of Lemma~\ref{lem:planeint}]
Without loss of generality, assume that the hyperplane $\mathcal{P}$ has normal vector $e_1=(1,0, \ldots, 0)$ and $\mathcal{P}\cap \partial V \neq \emptyset$.  Define
   \begin{align*}
   \ell_+&=\sup\{ \alpha \in \RR\,: \, \partial V \cap \{x\,: \, x_1 = \alpha \} \neq \emptyset \},\\
   \ell_- &= \inf\{  \alpha \in \RR\,: \, \partial V \cap \{x\,: \, x_1 = \alpha \} \neq \emptyset \}.   \end{align*} 
      If the set $\{ \alpha \in \RR\,: \, \partial V \cap \{x\,: \, x_1 = \alpha \}$ is not bounded above (respectively below), we set $\ell_+= \infty$ (respectively $\ell_-=-\infty)$.  Note that if $\ell_+\in \RR$, $\ell_+$ corresponds to the first coordinate of the \emph{rightmost} point on $\partial V$.  Similarly, if $\ell_{-} \in \RR$, $\ell_{-}$ corresponds to the first coordinate of the \emph{leftmost} point.  Observe that $\mathcal{P}\cap \partial V\neq \emptyset$ implies $\mathcal{P} = \{x : x_1 = \alpha\}$ for some $\alpha$ with $\alpha \in [\ell_-, \ell_+]$, $|\alpha| < \infty$. 
  
  First assume $\alpha = \ell_+< \infty$.  Without loss of generality, we may assume $\ell_+ = 0$ and $0 \in \mathcal{P}\cap \partial V$, otherwise we shift both sets.  To every $X \in \mathcal{P}\cap \partial V$, we associate its position vector $x = \overrightarrow{0X}$. Let $k$ be largest number of points $\{ X_1, X_2, \ldots, X_k \}\subset \mathcal{P}\cap \partial V$ such that the corresponding set of position vectors $\{x_1, x_2, \ldots, x_k \}$ is linearly independent.  If $k < d-1$, then  $\mathcal{L} := \textrm{span } \{x_1, \dots, x_k\} $ is $k \leq d-2$ dimensional linear space, and therefore 
$\partial V \cap \mathcal{P} \subset \mathcal{L} \cap \mathcal{P}$ is at most $(d-2)$-dimensional. Thus, $\pi(\partial V \cap \mathcal{P}) = 0$ as desired. 
If $k = d- 1$, then by the convexity of $V$ and the fact that $\alpha=\ell_+$, any convex combination of $\{ x_1, x_2, \ldots, x_{d-1}\}$ belongs to $\partial V \cap \mathcal{P}$.  However, this implies that $\partial V$ is locally a hyperplane about some point on $\partial V$, a contradiction to the fact that $\partial V$ is non-flat. 

A similar argument can be applied in the case when $\alpha = \ell_- >- \infty$. 
   
Finally, suppose $\alpha \in (\ell_{-}, \ell_{+})$. Fix $x\in \partial V \cap \mathcal{P}$ and note that the  normal vector to $\partial V$ at $x$ is not parallel to $e_1$.  Indeed, otherwise $\mathcal{P}$
is tangent to $V$ and by convexity, $V$ lies on one side of $\mathcal{P}$, a contradiction to $\alpha \in (\ell_{-}, \ell_{+})$.

As above, we can without loss of generality assume $\alpha = 0$ and $x = 0$. 
We parametrize $\partial V$ as $\partial V= \{ y\in \RR^d \, : \, \Phi(y)=0\}$ for some $C^1$-function $\Phi : \RR^d \to \RR$ with $\Phi(0)=0$ and, by the claim, 
$\nabla \Phi(0) \neq \lambda e_1$ for any $\lambda\in \RR$.  Then, 
\begin{equation}
 \partial V \cap \mathcal{P} \subset \{ y = (0, y_2, \dots, y_d): \Phi(0, y_2, \ldots, y_d)=0\} \,.
\end{equation}
To solve $\Phi(0, y_2, \ldots, y_d)=0$ we note that $\Phi(0) = 0$ and there exists $j\geq 2$ such that $\partial_{x_j} \Phi(0) \neq 0$. 
By the implicit function theorem, $y_j = \phi(y_2, \dots, y_{j - 1}, y_{j + 1}, \dots, y_{d - 1})$ locally for some $C^1$ function $\phi$, and therefore $\partial V \cap \mathcal{P}$ 
is locally a $(d -2)$-dimensional manifold. In particular $\pi(\partial V \cap \mathcal{P} \cap B_\rho) = 0$ for some $\rho > 0$. 

Since the countable union of set of zero measure is also set of zero measure, the result follows.  
\end{proof}

We are now ready to prove Theorem~\ref{thm:convexhull}.  
   
   \begin{proof}[Proof of Theorem~\ref{thm:convexhull}]
For any measurable $A_i \subset \partial V$ and any $n$, we have
\begin{align*}
\mathbf{Q}\{ \xi_{1}\in A_1, \ldots, \xi_n \in A_n \} = \pi(A_1) \pi(A_2)\dots \pi(A_n).  
\end{align*}
For any collection of points $y_1, \dots, y_k$ denote the set of vectors $\{\overrightarrow{y_1y_j}, j > 1\}$ by $\langle y_1, \dots, y_k\rangle $. 
Observe that $\langle y_1, \dots, y_k\rangle$ depends on the arrangement of points, but the span of the vectors $\langle y_1, \dots, y_k\rangle$ does not. Note that  $z$ belongs to the affine space 
defined by $y_1, \dots, y_k$ if an only if $z \in y_1 + \textrm{span} \{\langle y_1, \dots, y_k\rangle\} =: \mathcal{A}(y_1, \dots, y_k)$.
Also, we denote $y_1, \dots \hat{y}_j, \dots, y_k$
the sequence of points (or similarly vectors), where the point $y_j$ is omitted from the list. 
For $i_1, i_2, \dots, i_\ell\in \N$ all distinct with $\ell\leq d+1$, let
\begin{align}
D_{i_1 i_2 \dots i_\ell}= \{\omega \in \widetilde{\Omega} \, : \, \langle \xi_{i_1}(\omega),\xi_{i_2}(\omega), \dots, \xi_{i_\ell}(\omega)\rangle \text{ are linearly independent} \}
.\end{align}
We claim that $\mathbf{Q}(D_{i_1 i_2 \dots i_\ell})=1$. 
  Indeed,  since $i_1, i_2, \dots, i_\ell\in \N$ are all distinct with $\ell \leq d+1$, then by Lemma~\ref{lem:planeint} 
\begin{align*}
\mathbf{Q}(D_{i_1 i_2 \dots i_\ell}^c)&\leq \sum_{j=2}^\ell \mathbf{Q}\{\overrightarrow{\xi_{i_1}\xi}_{i_j} \in \text{span} \{\langle \xi_{i_1}, \dots, \hat{\xi}_{i_j}, \dots, \xi_{i_\ell}\rangle \} \}\\
&= \sum_{j=2}^\ell \int_{x_{i_j} \in \mathcal{A}(x_{i_1}, \dots,\hat{x}_{i_j}, \dots, x_{i_\ell} )\cap \partial V} \pi(dx_{i_j}) \pi^{\ell-1}(dx_{i_1} \dots d \hat{x}_{i_j} \dots dx_{i_\ell})=0 \,,
\end{align*}  
since $\mathcal{A}(x_{i_1}, \dots,\hat{x}_{i_j}, \dots, x_{i_\ell} )$ is at most 
$ (\ell-1)$-dimensional affine space which intersects $\partial V$ on a set of $\pi$-measure $0$ (see Lemma~\ref{lem:planeint}).

Fix a vector $v \in \RR^d$. 
If $\langle x_1, x_2, \dots, x_{d-1}\rangle$ are independent vectors, define
\begin{align*}
A_v(x_1, x_2, \dots, x_{d-1}) =
\{ x_{d} \in \partial V \, : \, v \in \textrm{span} \langle x_1, \dots, x_{d} \rangle \} 
\end{align*}
and if $\langle x_1, \dots, x_{d-1}\rangle$ are dependent define $A_v(x_1, x_2, \dots, x_{d-1})=\emptyset$.  
Note that if $v \not\in \langle x_1, x_2, \dots, x_{d}\rangle$ and $ \langle x_1, x_2, \dots, x_{d}\rangle$ are independent vectors, then $A_v(x_1, x_2, \dots, x_{d})$ is the intersection of $\partial V$ and the affine $(d - 1)$-dimensional space that contains points $ x_1, x_2, \dots, x_{d}$ and is parallel to $v$. 
For any distinct collection $i_1, i_2, \dots, i_d \in \N$, $D_{i_1 i_2 \dots i_{d}}$ is a a set of full measure, and therefore 
\begin{align*}
\mathbf{Q}(F_{i_1 i_2 \dots i_d})
&:= 
\mathbf{Q}(\{ \omega \in \widetilde{\Omega} \, : \, \mathcal{A}(\xi_{i_1}(\omega), \xi_{i_2}(\omega), \dots, \xi_{i_d}(\omega)) \text{ is parallel to } v \})\\
&= \mathbf{Q}(\{ \omega \in D_{i_1 i_2 \dots i_{d}} \, : \, \mathcal{A}(\xi_{i_1}(\omega), \xi_{i_2}(\omega), \dots, \xi_{i_d}(\omega)) \text{ is parallel to } v \}) \,.\end{align*}
Hence, 
by the Fubini-Tonelli theorem
\begin{align*}
\mathbf{Q}(F_{i_1 i_2 \dots i_d})
&=\mathbf{Q}(\{ \omega \in D_{i_1 i_2 \dots i_{d}}\, : \, \xi_{i_j}(\omega) \in A_v(\xi_{i_1}(\omega), \dots, \hat{\xi}_{i_j}(\omega), \dots, \xi_{i_{d}}(\omega)) \})\\
&=\int_{\{x_{j} \in A_{v}(x_1, \dots,\hat{x}_j, \dots,  x_{d})\}}\pi(dx_j) \pi^{d-1}(dx_1 \dots \hat{x}_j \dots dx_{d})=0 \,,
\end{align*}
where in the last equality we used  Lemma~\ref{lem:planeint}  and the fact that $A_{v}(x_1, \dots,\hat{x}_j, \dots,  x_{d})$ is at most $(d-1)$-dimensional affine 
space, and has $\pi$-measure zero when intersected with $\partial V$.

By taking finite unions, 
\begin{align}
F := \bigcup_{\substack{i_1, i_2, \dots, i_d \in\{1, \dots, N\} \\\text{all distinct}}} F_{i_1 i_2 \dots i_d},
\end{align}
we have $\mathbf{Q}(F)=0$.  Thus for any $N\geq d$ and any $\omega \in F^c$, with $\mathbf{Q}(F^c) = 1$ the realization
\begin{align*}
\xi_1(\omega), \xi_2(\omega), \dots, \xi_N(\omega)
\end{align*}
satisfies (i).  

To obtain (ii), fix $\epsilon >0$ and by~\cite[Theorem~1.1]{SW_03} there is a sufficiently large $N\geq d+1$ such that 
\begin{align*}
|V|- \E_\mathbf{Q} | [\xi_1, \xi_2, \dots, \xi_N] |= |V|-\E_\mathbf{Q} \mathbf{1}_{F^c} | [\xi_1, \xi_2, \dots, \xi_N] | < \epsilon^2.  
\end{align*}
Since convexity of $V$ implies$ | [\xi_1, \xi_2, \dots, \xi_N] | \leq |V|$, then by Chebyshev's inequality
 \begin{equation}
 \mathbf{Q}\{  |V|- | [\xi_1, \xi_2, \dots, \xi_N] | > \epsilon \} \leq \frac{|V|-\E_\mathbf{Q} \mathbf{1}_{F^c} | [\xi_1, \xi_2, \dots, \xi_N] |}{\epsilon} < \epsilon \,, 
 \end{equation}
which concludes the proof.

   \end{proof}

   An almost immediate consequence of the previous result is the following corollary.  
   
   \begin{Corollary}
   Suppose that $V\subset \RR^d$, $d \geq 2$ is a non-empty, bounded, convex, non-flat domain with $C^1$ boundary $\partial V$. 
    Assume $\tilde{b}\in C^\infty(\RR^d; \RR^d)$ and $\tilde{\sigma}\in C^\infty(\RR^d; M_{d\times k})$ and there exits a unit vector $v \in \RR^d$
    such that $v \in \mathcal{R}(\sigma(x))$ for every $x\in \partial V$.  
    Then, for every $\epsilon >0$, there exists  a non-empty, open convex domain $D_\epsilon \subset V$ with piecewise smooth boundary $\partial D_\epsilon$ such that every 
    $y\in \partial D_\epsilon$ is regular for $(x_t, D_\epsilon)$ and   $|V|-|D_\epsilon| < \epsilon$.    
      \end{Corollary}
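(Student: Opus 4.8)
The plan is to take $D_\epsilon$ to be the interior of a convex polytope inscribed in $V$ — with vertices chosen on $\partial V$ as in Theorem~\ref{thm:convexhull} — and then to certify that every point of its (piecewise linear) boundary is regular for $(x_t,D_\epsilon)$ using the exterior cone criterion of Corollary~\ref{cor:noisecone}.

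First I would fix $\epsilon>0$, set $\epsilon'=\tfrac{1}{2}\min\{\epsilon,|V|,1\}$, and apply Theorem~\ref{thm:convexhull} (with the unit vector $v$ of the hypothesis) to obtain $n(\epsilon')\in\N$ and a set $S_{\epsilon'}$ with $\mathbf Q(S_{\epsilon'})>1-\epsilon'>0$. Since $S_{\epsilon'}\ne\emptyset$, fix $\omega\in S_{\epsilon'}$, write $N=n(\epsilon')$, $\xi_i=\xi_i(\omega)\in\partial V$, $P=[\xi_1,\dots,\xi_N]$, and $D_\epsilon:=\mathrm{interior}(P)$. Conclusion (ii) of Theorem~\ref{thm:convexhull} gives $|P|>|\overline{V}|-\epsilon'>0$, so $P$ is a full-dimensional convex polytope; hence $D_\epsilon$ is a non-empty, open, bounded, convex domain and $\overline{D_\epsilon}=P$. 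Its boundary $\partial D_\epsilon=\partial P$ is a finite union of facets, each contained in an affine hyperplane, so it is piecewise smooth. Since $\partial V$ (being $C^1$) and $\partial P$ are Lebesgue-null, $|V|-|D_\epsilon|=|\overline{V}|-|P|<\epsilon'\le\epsilon$. Finally, convexity of $\overline{V}$ and $\xi_i\in\partial V$ give $P\subset\overline{V}$, whence $D_\epsilon=\mathrm{interior}(P)\subset\mathrm{interior}(\overline{V})=V$, the last equality because $V$ is open and convex.

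Next I would fix an arbitrary $y\in\partial D_\epsilon=\partial P$ and produce the data required by Corollary~\ref{cor:noisecone} for the pair $(x_t,D_\epsilon)$ at $y$. The point $y$ lies in the closure of some facet $F$ of $P$; let $n$ be the outward unit normal of the hyperplane $H_F$ affinely spanned by $F$, so that $P\subset\{z:(z-y)\cdot n\le0\}$ and therefore $\{z:(z-y)\cdot n>0\}\subset\RR^d\setminus P=\overline{D_\epsilon}^{\,c}$. Among the vertices of $F$ — all of which lie in $\{\xi_1,\dots,\xi_N\}$ — choose any $d$ of them; by conclusion (i) of Theorem~\ref{thm:convexhull} they determine $H_F$ uniquely and $H_F$ is not parallel to $v$, i.e. $v\cdot n\ne0$. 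Set $w_0=\mathrm{sgn}(v\cdot n)\,v$, so $w_0\cdot n>0$ and $w_0\in\mathcal R(\tilde{\sigma}(y))$ (using $y\in\overline{V}$ and that the noise hypothesis holds there). Then choose a basis $x_1,\dots,x_d$ of $\RR^d$ with $x_i\cdot n>0$ for every $i$ and $\tfrac{1}{d}\sum_{i=1}^d x_i=w_0$ — for instance, with $\{e_1,\dots,e_{d-1}\}$ an orthonormal basis of $w_0^\perp$ and $\rho>0$ small, take $x_i=w_0+\rho e_i$ for $i<d$ and $x_d=w_0-\rho\sum_{j<d}e_j$, which is a basis satisfying both properties since $w_0\cdot n>0$. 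For $\lambda_i\in(0,1)$ one has $(\sum_i\lambda_i x_i)\cdot n>0$, so $\mathrm{Cone}(y;x_1,\dots,x_d)\subset\{z:(z-y)\cdot n>0\}\subset\overline{D_\epsilon}^{\,c}$; thus $D_\epsilon$ satisfies the exterior cone condition at $y$. Moreover $w_0=\sum_i\tfrac{1}{d}x_i$ with $\tfrac{1}{d}\in(0,1)$ (as $d\ge2$), so $y+w_0\in\mathrm{Cone}(y;x_1,\dots,x_d)$ with $w_0\in\mathcal R(\tilde{\sigma}(y))$. Corollary~\ref{cor:noisecone} then yields that $y$ is regular for $(x_t,D_\epsilon)$, and since $y\in\partial D_\epsilon$ was arbitrary, the proof is complete.

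The main obstacle will be the geometric bookkeeping of the third paragraph: keeping the inscribed polytope's interior inside $V$ (which rests on $\mathrm{interior}(\overline{V})=V$ for open convex $V$), producing at each boundary point — including points on lower-dimensional faces — a supporting hyperplane whose outward normal is non-orthogonal to $v$ (which is exactly what conclusion (i) of Theorem~\ref{thm:convexhull} provides, via an adjacent facet), and realizing $\pm v\in\mathcal R(\tilde{\sigma}(y))$ as a ray through the interior of a legitimate exterior cone (which forces the explicit basis above). One caveat worth recording is that $\partial D_\epsilon$ is not contained in $\partial V$ — only the vertices of $P$ are — so the assumption $v\in\mathcal R(\tilde{\sigma}(\cdot))$ is really needed on all of $\overline{V}$; this is automatic in the examples of interest, where $\tilde{\sigma}$ is constant.
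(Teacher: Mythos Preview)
Your approach is essentially the same as the paper's: take $D_\epsilon$ to be the interior of the convex hull of points on $\partial V$ furnished by Theorem~\ref{thm:convexhull}, and then invoke Corollary~\ref{cor:noisecone} at every boundary point using that no facet hyperplane is parallel to $v$. The paper's proof is considerably more terse --- it simply asserts that Corollary~\ref{cor:noisecone} applies once no face is parallel to $v$ --- whereas you carefully fill in the geometry (full-dimensionality of $P$, $D_\epsilon\subset V$ via $\mathrm{interior}(\overline V)=V$, the explicit exterior cone at each boundary point including those on lower-dimensional faces, and the basis realizing $\pm v$ inside the cone). All of this is correct and welcome.

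Your final caveat is well taken and in fact exposes a genuine lacuna shared by the paper's own proof: the hypothesis as stated only places $v\in\mathcal R(\tilde\sigma(x))$ for $x\in\partial V$, yet the application of Corollary~\ref{cor:noisecone} requires $v\in\mathcal R(\tilde\sigma(y))$ for $y\in\partial D_\epsilon$, and $\partial D_\epsilon\cap\partial V$ consists only of the polytope's vertices. Neither your argument nor the paper's closes this gap without strengthening the hypothesis to $v\in\mathcal R(\tilde\sigma(x))$ for all $x\in\overline V$ (or at least in a neighborhood of $\partial V$), which --- as you note --- is automatic when $\tilde\sigma$ is constant. You are right to flag this rather than paper over it.
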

      
      \begin{proof}
Fix $\epsilon >0$  and select points $x_1, x_2, \dots, x_{n(\epsilon)} \in \partial V$ satisfying both (i) and (ii) of Theorem~\ref{thm:convexhull}.  
Define $$D_\epsilon= \text{interior}([x_1,x_2, \dots, x_{n(\epsilon)}]).$$
      and note that $|V|-|D_\epsilon| < \epsilon$ and $\partial D_\epsilon$ is piecewise $C^2$.  Since no face on the boundary $\partial D_\epsilon$ is parallel to $v$ ,
      Corollary~\ref{cor:noisecone} implies the assertion.         
      \end{proof}

\bibliographystyle{amsplain}
\providecommand{\bysame}{\leavevmode\hbox to3em{\hrulefill}\thinspace}
\providecommand{\MR}{\relax\ifhmode\unskip\space\fi MR }
\providecommand{\MRhref}[2]{%
  \href{http://www.ams.org/mathscinet-getitem?mr=#1}{#2}
}
\providecommand{\href}[2]{#2}

\end{document}